\newtheorem{theorem}{Theorem}[section]
\numberwithin{theorem}{section}
\newtheorem{prop}[theorem]{Proposition}
\newtheorem{lemma}[theorem]{Lemma}
\newtheorem{corollary}[theorem]{Corollary}
\theoremstyle{definition}
\newtheorem{definition}[theorem]{Definition}
\newtheorem{rem}[theorem]{Remark}
\newtheorem{example}[theorem]{Example}
\numberwithin{equation}{section}
\newcommand{\N}{\mathbb{N}}
\newcommand{\R}{\mathbb{R}}
\newcommand{\C}{\mathbb{C}}
\newcommand\e{\mathrm{e}}
\newcommand\I{\mathrm{i}}
\newcommand\re{\operatorname{Re}}
\newcommand\im{\operatorname{Im}}
\newcommand{\cH}{\mathbb H}
\newcommand{\cV}{\mathcal V}
\newcommand{\rd}{\mathrm{d}}
\newcommand\dom{\mathcal D}
\newcommand\ran{\mathcal R}
\newcommand\cQ{\mathcal Q}
\newcommand\cD{\mathcal D}
\newcommand\lbar\overline
\newcommand\eps\varepsilon
\renewcommand\epsilon\varepsilon
\renewcommand\rho\varrho
\newcommand\al\alpha
\newcommand\lm\lambda
\newcommand\ds\displaystyle
\newcommand\p\partial
\newcommand{\tolong}{\longrightarrow}
\newcommand{\gsr}{\stackrel{gsr}{\rightarrow}}
\newcommand{\s}{\stackrel{s}{\rightarrow}}
\newcommand{\gsrlong}{\stackrel{gsr}{\longrightarrow}}
\newcommand{\slong}{\stackrel{s}{\longrightarrow}}
\newcommand{\dist}{\operatorname{dist}}
\newcommand{\beq}{\begin{equation}}
\newcommand{\eeq}{\end{equation}}
\newcommand{\be}{\begin{equation*}}
\newcommand{\ee}{\end{equation*}}
\newcommand{\bmat}{\begin{pmatrix}}
\newcommand{\emat}{\end{pmatrix}}
\author{Sabine B\"ogli}
\address[S.\,B.]{\!
Department of Mathematics, Imperial College London, South Kensington Campus, London SW7 2AZ, UK}
\email{s.boegli@imperial.ac.uk}
\author{Marco Marletta}
\address[M.\,M.]{\!
School of Mathematics,
Cardiff University,
21–-23 Senghennydd Road, Cardiff CF24 4AG, UK}
\email{MarlettaM@cardiff.ac.uk}
\author{Christiane Tretter}
\address[C.\,T.]{\!
Mathematisches Institut,
Universit\"at Bern, Sidlerstr.\ 5, 3012 Bern, Switzerland}
\email{tretter@math.unibe.ch}
\thanks{
}
\title[The essential numerical range for unbounded linear operators]{The essential numerical range for unbounded linear operators}
\begin{document}

\subjclass[2010]{47A10, 47A12, 47A55, 47A58}

\keywords{Essential numerical range, numerical range, eigenvalue approximation, spectral pollution}

\date{\today}

\begin{abstract}
We introduce the concept of essential numerical range $W_{\!e}(T)$ for unbounded Hil\-bert space operators $T$ 
and study its fundamental properties including possible equivalent characterizations and perturbation results. 
Many of the properties known for the bounded case do \emph{not} carry over to the unbounded case,
and new interesting phenomena arise which we illustrate by some striking examples. A key feature 
of the essential numerical range $W_{\!e}(T)$ is that it captures spectral pollution in a unified and minimal way 
when approximating $T$ by projection methods or domain truncation methods for PDEs. 
\end{abstract}

\maketitle

\section{Introduction}
The main object of this paper is the \emph{essential numerical range} $W_{\!e}(T)$ which we introduce for unbounded operators $T$ in a Hilbert space. 
This concept is of great importance in the spectral analysis of non-normal operators and, in particular, in the numerical analysis of differential operators and approximations thereof. Our principal results include the analysis of several alternative, but only partly equivalent characterizations of $W_{\!e}(T)$, a series of perturbation theorems, and results showing that $W_{\!e}(T)$  captures spectral pollution in a unified and minimal way when 
$T$ is approximated by projection and/or domain truncation methods. Diverse examples and applications, e.g.\ to non-symmetric strongly elliptic PDEs, illustrate the sharpness and wide range of applicability of our results.
    
There are good reasons for the long time elapsed between this article and the first papers on the essential numerical range for bounded operators, 
dating back to Stampfli and Williams \cite{Stampfli-Williams} in 1968 and subsequent joint work with Fillmore \cite{fillmore}.
 The unbounded case is significantly different from the bounded case in several respects.  We show that definitions which are equivalent in the bounded case may yield very different sets in the unbounded case. It was not clear at the outset which would be most appropriate to regard as the canonical essential numerical range.  Moreover, none of the usual tools such as graph norms or mapping theorems can be used to reduce the unbounded case to the bounded case, so that a gamut of new ideas and tools had to be developed. The pay-off has exceeded our most optimistic expectations, both on the abstract level and for applications.

The original idea of the essential numerical range was to give a convex enclosure of the essential spectrum, just as the  (closure of the) numerical range gives a convex enclosure for the approximate point spectrum. However we became interested in the essential numerical range also because of an ambitious aim to establish an abstract tool for capturing \emph{spectral pollution}, independent of the particular type of approximation method and not limited to special operator classes such as selfadjoint, close-to-selfadjoint, or second-order-differential. The key connection between the essential numerical range $W_{\!e}(T)$ and spectral pollution is  the new concept of \emph{limiting essential numerical range} (Definition \ref{defWeTn} below).

Some of the earliest descriptions of the phenomenon of spectral pollution were motivated by finite element approximations in plasma physics, see, e.g.\ \cite{MR642457, MR642458, MR800851}, which analysed sequences of eigenvalues of the approximating problems converging to a limit that is not a true eigenvalue. Already there it was noted that such spurious eigenvalues can only occur in gaps of the essential spectrum of the selfadjoint operators considered. An interesting reverse perspective on spectral pollution is that approximating large, finite-domain PDE problems by infinite-domain problems may result in a loss of spectral information, with the lost spectrum being termed absolute spectrum in \cite{MR2341158, MR1782392}.

Although a substantial literature on spectral pollution is now available, most of it concerns selfadjoint operators
and deals with particular methods to approximate spectra such as projection and/or domain truncation methods, see, e.g.\ \cite{MR3022252,boulton,Marletta-2010,lewin-sere,davies-plum-2004,levitin,MR1661578}; some works discuss methods to avoid spectral pollution, while others try to characterize the sets in $\mathbb C$ in which spectral pollution may be present. Dauge and Suri \cite{MR1935966,MR2215994} follow Descloux \cite{descloux} to circumvent the unboundedness in their selfadjoint problems by considering a concept of essential numerical range with respect to a coercive form (a change of topology); their essential numerical range is then the convex hull of the essential spectrum. The only step away from selfadjointness without recourse to perturbation arguments, is the generalization of the classical Titchmarsh-Weyl nesting analysis for $M$-functions of Sturm-Liouville operators to non-selfadjoint cases in~\cite{MR1701776}.

While the main applications presented concern spectral pollution, we emphasize that this article is really about the essential numerical range itself.  In the first part, we start in Section~\ref{section:3} by fixing our definition for the concept of \emph{essential numerical range}
and examining fundamental issues, including geometric properties related to the numerical range and the question of when $W_{\!e}(T)$ is empty, 
which can only occur for unbounded operators. In Section~\ref{section:equiv}, our first main result, Theorem \ref{thmequivdefforWe}, introduces 
four further possible definitions of essential numerical range. Unlike the case of bounded operators studied by Fillmore, Stampfli and Williams~\cite{fillmore}, Salinas~\cite{salinas}, Pokrzywa \cite{pokrzywa-1,pokrzywa} and Descloux~\cite{descloux}, in general these are not the same and the conditions under which at least some of them coincide are non-trivial. For example, an important role is played by the domain intersection $\dom (T) \cap \dom (T^*)$ which, even for m-accretive operators, can be anything from $\{0\}$ to a dense set, see~\cite{AT}. We study the relationship between the essential numerical range and the convex hull of the 
various different types of essential spectrum. In general, the latter may be a much smaller set and only in particular cases, 
e.g.\ if the operator is selfadjoint and semibounded, do they coincide; for non-semibounded selfadjoint operators, $W_{\!e}(T)$  
coincides with the convex hull of the \emph{extended essential spectrum} of Levitin and Shargorodsky \cite{levitin}.
In Section~\ref{sectionperturb} we derive several perturbation results and describe some startling examples
showing that some results which one may have expected to be true, are actually false, see, e.g.\ Remark \ref{rem:Wetilde} and Example \ref{xmas}.
We also establish some useful results which may be used to compute the essential numerical range when an operator can be decomposed into real and imaginary~parts. 

In the second part of our paper, Section~\ref{sec:limessnr} introduces the notion of limiting essential numerical range for a sequence of operators 
$(T_n)_{n\in\N}$, which is 
even new in the bounded case, and derives conditions on approximation methods under which it coincides with the essential numerical range of the approximated operator $T$. Section~\ref{sectiongalerkin} studies spectral pollution arising from approximation of operators by projection methods in a Hilbert space, while 
studies approximation by domain truncation of strongly elliptic, not necessarily selfadjoint partial differential operators on domains in 
${\mathbb R}^d$. Here our main results, Theorems \ref{thmsummary} and \ref{thmtruncation}, describe how closely the essential numerical range captures spectral pollution, without any recourse whatsoever to hypotheses of selfadjointness or perturbation-from-selfadjointness.
This is illustrated by applications to non-selfadjoint neutral delay differential equations and differential equations with non-real essential spectrum 
of advection-diffusion type. 

Throughout this paper we denote by $H$ a separable infinite-dimensional Hilbert space. 
The notations $\|\cdot\|$ and $\langle\cdot,\cdot\rangle$ refer to the norm and scalar product of $H$.
Strong and weak convergence of elements in $H$ is denoted by $x_n\to x$ and $x_n\stackrel{w}{\to}x$, respectively.
By $L(H)$ we denote the space of all bounded linear operators acting in $H$, and by $C(H)$  the space of all closed linear operators in $H$.
Norm and strong operator convergence in $L(H)$ is denoted by $T_n\to T$ and $T_n\s T$, respectively.
Identity operators are denoted by $I$; scalar multiples $\lm I$ are written as $\lm$.
The domain, range, spectrum, point spectrum and resolvent set
of an operator $T$ in $H$ are denoted by $\dom(T)$, $\ran(T)$, $\sigma(T)$, $\sigma_p(T)$, $\rho(T)$, respectively, 
and $T^*$ denotes the Hilbert space adjoint of $T$;  
note that whenever we assume that an operator has non-empty resolvent set, the operator is automatically closed. 
The numerical range is $W(T):=\{\langle T x,x\rangle:\,x\in\dom(T),\,\|x\|=1\}$. 
For non-selfadjoint operators there exist (at least) five different definitions for the essential spectrum which all coincide in the selfadjoint case; for a discussion see  Edmunds and Evans~\cite[Chapter~IX]{edmundsevans}.
Here we use
$$\sigma_e(T):=\left\{\lm\in\C:\,\exists\, (x_n)_{n\in\N}\subset\dom(T) \text{ with }\|x_n\|=1,\,x_n\stackrel{w}{\to}0,\,\|(T-\lm)x_n\|\to 0\right\},$$
which is $\sigma_{e,2}$ in~\cite{edmundsevans}. 
%
Following  Kato~\cite[Section V.3.10]{Kato}, we call a linear operator $T$ in $H$ 
sectorial if $W(T)\subset\{\lm\in\C:\,|\arg(\lm-\gamma)|\leq\theta\}$ for some sectoriality semi-angle $\theta\in [0,\pi/2)$ and sectoriality vertex $\gamma\in\R$. {$T$} is $m$-sectorial if, in addition, $\lm\in\rho(T)$ for some (and hence all) $\lm\in\C\backslash \overline{W(T)}$.
For a sesquilinear form $t$ in $H$ with domain $\dom(t)$, sectoriality is defined analogously.
A subspace $\cD\subset \dom(T)$ is called a core of a closable operator $T$ if $T|_{\cD}$ is closable with closure $\overline{T}$; a core of a closable sequilinear form is defined analogously, see~\cite[Sections III.5.3,~IV.1.4]{Kato} (note that here we do not restrict ourselves to sectorial forms).
For a subset $\Omega\subset\C$ we denote its interior by ${\rm int}\,\Omega$, its convex hull by ${\rm conv}\,\Omega$, its complex conjugated set by $\Omega^*:=\{\overline{z}:\,z\in\Omega\}$, and the distance of $z\in\C$ to $\Omega$ {by} ${\rm dist}(z,\Omega):=\inf_{w\in \Omega}|z-w|$. Finally, $B_{r}(\lm):=\{z\in\C:\,|z-\lm|<r\}$ is the open disk of radius $r$ around $\lm\in\C$.

\section{The essential numerical range {of unbounded operators}}
\label{section:3}

The essential numerical range $W_{\!e}(T)$ was introduced by Stampfli and Williams in~\cite{Stampfli-Williams} for a bounded linear operator $T$ 
in a Hilbert space $H$ as the closure of the numerical range of the image of $T$ in the Calkin algebra, $W_{\!e}(T):= \bigcap\, \{\overline{W(T+K)}: K\,\text{compact}\}$. Various equivalent characterizations were established in the sequel in~\cite{fillmore}. It is immediate from the definition that $W_{\!e}(T)$ is a compact convex subset of $\C$, and one can show that $W_{\!e}(T)\ne \emptyset$ in the bounded case.

The generalization to the unbounded case is not as straightforward as one might expect and leads to interesting new phenomena. In particular, 
for some characterizations seemingly obvious generalizations might fail; e.g.\ in the original definition one cannot replace compact perturbations by relatively compact ones. Moreover, the different characterizations are no longer equivalent in general and some questions only arise in the unbounded case, e.g.\ when is $W_{\!e}(T) \ne \emptyset$.

In the unbounded case, the characterization established in~\cite[Theorem (5.1)\,(3)]{fillmore} turns out to be a good starting point. 
Note that in general, if not stated otherwise, we consider unbounded linear operators $T$ that do not need to be closable or closed. 

\begin{definition}\label{defWe}
For a linear operator $T$ with domain $\dom(T) \subset H$
we define the \emph{essential numerical range} of $T$ by 
$$
W_{\!e}(T):=\left\{\lm\in\C:\,\exists\, (x_n)_{n\in\N}\subset\dom(T) 
\text{ with }
\|x_n\|\!=\!1,\,x_n\!\stackrel{w}{\to}\!0,\\[1mm] 
\langle Tx_n,x_n\rangle \to  
\lm
\right\}.$$
\end{definition}

Clearly, $W_{\!e}(T) \subset \overline{W(T)}$ by definition and 
$W_e(zT)=zW_{\!e}(T)$ and $W_e(T\!+\!z)=W_{\!e}(T)\!+\!z$ for~$z\in\C$.

Our first aim is to investigate the equivalence of other possible definitions of $W_{\!e}(T)$, including the original one in~\cite{Stampfli-Williams}, see Theorem~\ref{thmequivdefforWe}. To this end we need some geometric properties of $W_{\!e}(T)$, which are of independent interest.

First we show that $W_{\!e}(T)$ continues to be closed and convex in the unbounded case. Secondly, 
we investigate some relations between the geometry of the numerical range $W(T)$ and that of $W_{\!e}(T)$; 
they will also provide criteria for $W_{\!e}(T)$ to be unbounded or non-empty. 

\begin{prop}\label{propWeclosed-convex}
The essential numerical range $W_{\!e}(T)$ is closed and convex, and ${\rm conv}\, \sigma_e(T) \subset W_{\!e}(T)$.
\end{prop}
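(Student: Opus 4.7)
The statement decomposes into three parts which I would treat in order: closedness of $W_{\!e}(T)$, convexity, and the inclusion of ${\rm conv}\,\sigma_e(T)$. The last is an immediate consequence of the first two once one verifies the inclusion $\sigma_e(T)\subset W_{\!e}(T)$, and I expect the real content to lie in convexity. The classical Toeplitz--Hausdorff argument from the bounded setting has no obvious unbounded analogue, because cross terms of the form $\langle Tx,y\rangle$ need not be controllable via weak convergence when $T$ is unbounded.

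For closedness I would use a diagonal extraction. Take $\lambda_k\to\lambda$ with $\lambda_k\in W_{\!e}(T)$ and witnessing sequences $(x_n^{(k)})_n$, and fix an orthonormal basis $(e_j)_{j\in\N}$ of $H$. For each $k$ I would choose $n_k$ large enough that $|\langle x_{n_k}^{(k)},e_j\rangle|<1/k$ for every $j\le k$ and $|\langle Tx_{n_k}^{(k)},x_{n_k}^{(k)}\rangle-\lambda_k|<1/k$. Then $z_k:=x_{n_k}^{(k)}$ is a unit sequence in $\dom(T)$, converges weakly to $0$ (since $\langle z_k,e_j\rangle\to 0$ for each fixed $j$ and $(z_k)$ is bounded), and satisfies $\langle Tz_k,z_k\rangle\to\lambda$, giving $\lambda\in W_{\!e}(T)$.

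For convexity the key idea is to apply the finite-dimensional Toeplitz--Hausdorff theorem to the compression of $T$ to a two-dimensional subspace, thereby avoiding any need to control cross quadratic terms involving $T$. Let $\lambda,\mu\in W_{\!e}(T)$ be witnessed by $(x_n)$ and $(y_n)$. First I would extract a subsequence of $(y_n)$ so that $\langle x_n,y_n\rangle\to 0$; this is possible because $y_k\stackrel{w}{\to}0$ forces $\langle x_n,y_k\rangle\to 0$ as $k\to\infty$ for each fixed $n$. For each $n$ set $V_n:=\operatorname{span}(x_n,y_n)\subset\dom(T)$ and let $P_n$ be the orthogonal projection onto $V_n$. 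Then $\langle Tz,z\rangle=\langle P_nT|_{V_n}z,z\rangle$ for every $z\in V_n$, so the set $\{\langle Tz,z\rangle:z\in V_n,\,\|z\|=1\}$ equals the numerical range of the at-most-$2\times 2$ operator $P_nT|_{V_n}$ on $V_n$ and is therefore convex by Toeplitz--Hausdorff. Since $\langle Tx_n,x_n\rangle$ and $\langle Ty_n,y_n\rangle$ both lie in it, for each fixed $t\in[0,1]$ there exists a unit vector $z_n=\alpha_n x_n+\beta_n y_n\in V_n$ realising the convex combination $t\langle Tx_n,x_n\rangle+(1-t)\langle Ty_n,y_n\rangle$. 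Because $\langle x_n,y_n\rangle\to 0$ the Gram matrix of $(x_n,y_n)$ tends to the identity, so $|\alpha_n|,|\beta_n|$ stay bounded; together with $x_n,y_n\stackrel{w}{\to}0$ this gives $z_n\stackrel{w}{\to}0$, while $\|z_n\|=1$ and $\langle Tz_n,z_n\rangle\to t\lambda+(1-t)\mu$, proving this point lies in $W_{\!e}(T)$.

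For the final inclusion, if $\lambda\in\sigma_e(T)$ then the singular sequence $(x_n)$ from the definition of $\sigma_e(T)$ satisfies
\[
|\langle Tx_n,x_n\rangle-\lambda|=|\langle(T-\lambda)x_n,x_n\rangle|\le\|(T-\lambda)x_n\|\to 0
\]
by Cauchy--Schwarz, so $\sigma_e(T)\subset W_{\!e}(T)$; closedness and convexity then yield ${\rm conv}\,\sigma_e(T)\subset W_{\!e}(T)$. The main obstacle I anticipate is the convexity step: the tempting route of orthonormalising $(x_n,y_n)$ by Gram--Schmidt produces a $2\times 2$ matrix whose off-diagonal entries involve the uncontrollable cross terms $\langle Tx_n,y_n\rangle$ and $\langle Ty_n,x_n\rangle$, so one must apply Toeplitz--Hausdorff directly in the non-orthonormal basis $(x_n,y_n)$ in order to circumvent the unboundedness of $T$.
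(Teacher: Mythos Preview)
Your proposal is correct and follows essentially the same approach as the paper: closedness by a diagonal extraction, convexity by compressing $T$ to the two-dimensional span of the witnessing vectors (after arranging near-orthogonality) and invoking Toeplitz--Hausdorff for the finite-dimensional compression, and the inclusion $\sigma_e(T)\subset W_{\!e}(T)$ via Cauchy--Schwarz. Your explicit Gram-matrix argument for the boundedness of the coefficients $\alpha_n,\beta_n$ is a welcome detail that the paper leaves implicit.
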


\begin{proof}
The closedness of $W_{\!e}(T)$ follows by a standard diagonal sequence argument.
To show that $W_{\!e}(T)$ is convex, let $\lm,\mu\in W_{\!e}(T)$. Then there
exist two sequences $(x_n)_{n\in\N}$, $(y_n)_{n\in\N}\subset\dom(T)$
with $\|x_n\|=\|y_n\|=1$, $x_n\stackrel{w}{\to} 0$,
$y_n\stackrel{w}{\to}0$ as $n\to\infty$ and
$$|\langle x_n,y_n\rangle|<\frac{1}{n}, \quad |\langle
Tx_n,x_n\rangle-\lm|<\frac{1}{n}, \quad |\langle
Ty_n,y_n\rangle-\mu|<\frac{1}{n},\quad n\in\N.$$
Let $t\in [0,1]$ and $\nu:= t \lm +(1-t)\mu\in{\rm conv}\,\{\lm,\mu\}$.
For $n\in\N$, denote by $P_n:H\to {\rm span}\,\{x_n,y_n\}$ the orthogonal projection in $H$ onto
${\rm span}\,\{x_n,y_n\}$ and define the compression $T_n:=P_nT|_{\ran(P_n)}$.
Since $\langle Tx_n,x_n\rangle, \langle Ty_n,y_n\rangle\in W(T_n)$ and
the latter is convex, there exists
$z_n\in\ran(P_n)$ with $\|z_n\|=1$ and $$|\langle Tz_n,z_n\rangle
-\nu|=|\langle T_n z_n,z_n\rangle -\nu|<\frac{1}{n}, \quad n\in\N.$$
Now $x_n\stackrel{w}{\to} 0$, $y_n\stackrel{w}{\to}0$ as $n\to\infty$
and $|\langle x_n,y_n\rangle|<1/n$, $n\in\N$,  imply
$z_n\stackrel{w}{\to}0$ as $n\to\infty$ and so $\nu\in W_{\!e}(T)$.

The inclusion $\sigma_e(T) \subset W_{\!e}(T)$ is immediate from the definitions 
and so the last claim follows since $W_{\!e}(T)$ is convex.
\end{proof}

Next we give criteria for $W_{\!e}(T)\ne \emptyset$ in terms of the numerical range. 
It turns out that the case where $W(T)$ is a half-plane is different from all others, see Corollary~\ref{corlines} and Example~\ref{corlines}. 

\begin{prop}
\label{WeTnonempty}
If $\,\overline{W(T)}$ is a line or a strip or if $\,W(T)=\C$, then $W_{\!e}(T)\neq\emptyset$.
In particular, $W_{\!e}(T)\neq\emptyset$ if \,$T$ is densely defined and not closable.
\end{prop}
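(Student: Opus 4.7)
The plan is to produce, in each of the three cases, a weakly null unit sequence $(x_n)\subset\dom(T)$ along which $\langle Tx_n,x_n\rangle$ converges; the main tool is a two-dimensional compression combined with an intermediate value argument. First observe that $\dom(T)$ must be infinite-dimensional in every case, for otherwise $T|_{\dom(T)}$ would be bounded and $\overline{W(T)}$ compact, incompatible with a line, strip, or $\C$. For the line and strip cases I would use the identity $W_{\!e}(e^{\I\theta}T+z_0)=e^{\I\theta}W_{\!e}(T)+z_0$ noted after Definition~\ref{defWe} to rotate and translate so that $\overline{W(T)}\subset\{z\in\C:0\le\im z\le c\}$ for some $c\ge 0$ (with $c=0$ in the line case). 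Then $\im\langle Tx,x\rangle$ is uniformly bounded on unit vectors of $\dom(T)$, while density of $W(T)$ in the strip forces $\re\langle Tx,x\rangle$ to be unbounded both above and below.

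Fix any orthonormal sequence $(e_n)\subset\dom(T)$ and write $\lambda_n=\langle Te_n,e_n\rangle=\alpha_n+\I\beta_n$. If $(\alpha_n)$ admits a bounded subsequence, then so does $(\lambda_n)$ and a sub-subsequence converges, giving a point of $W_{\!e}(T)$. Otherwise $|\alpha_n|\to\infty$, and the core tool is the two-dimensional compression: for any two linearly independent unit vectors $u,v\in\dom(T)$, the map $\theta\mapsto\langle Tw_\theta,w_\theta\rangle/\|w_\theta\|^2$ with $w_\theta:=\cos\theta\,u+\sin\theta\,v$ is continuous on $[0,\pi/2]$, takes values in $\overline{W(T)}$, and equals $\langle Tu,u\rangle$ and $\langle Tv,v\rangle$ at the endpoints; the intermediate value theorem applied to the real part then yields some $\theta_\star$ for which the real part of the normalised evaluation vanishes.

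The main obstacle is combining this with weak nullness. Density of $W(T)$ in the strip yields a unit sequence $(f_k)\subset\dom(T)$ with $\re\langle Tf_k,f_k\rangle\to-\infty$ (opposite to the assumed $\alpha_n\to+\infty$; the other sign is symmetric), but $(f_k)$ need not be weakly null, and a weak subsequential limit $f\in H$ may satisfy only $\|f\|\le 1$. To circumvent this I would choose $(e_n)$ inside $\{f\}^\perp\cap\dom(T)$---still infinite-dimensional since $\dom(T)$ is---and pick indices $k_n$ by a diagonal extraction so large that $|\langle e_j,f_{k_n}\rangle|<2^{-n}$ for $j\le n$; the resulting asymptotic orthogonality forces the IVT-vector $w_n\in\mathrm{span}\{e_n,f_{k_n}\}$ (with vanishing real part) to satisfy $\|w_n\|\to 1$ and $w_n/\|w_n\|\stackrel{w}{\to}0$. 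Since the imaginary parts lie in $[0,c]$, a final subsequence gives $\langle Tw_n/\|w_n\|,w_n/\|w_n\|\rangle\to\I\beta_\star\in W_{\!e}(T)$.

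The case $W(T)=\C$ is handled by the same compression/intermediate value scheme, with opposing sequences of arbitrarily large real parts of opposite signs immediately available. Finally, for the non-closable case I would pick $(x_n)\subset\dom(T)$ with $x_n\to 0$ and $Tx_n\to y\ne 0$, and $z\in\dom(T)$ with $\langle y,z\rangle\ne 0$ (which exists since $\dom(T)$ is dense and $y\ne 0$). For $t\in\C$ the unit vectors $(z+tx_n)/\|z+tx_n\|\in\dom(T)$ satisfy $\langle T(z+tx_n),z+tx_n\rangle/\|z+tx_n\|^2\to(\langle Tz,z\rangle+t\langle y,z\rangle)/\|z\|^2$ as $n\to\infty$; as $t$ ranges over $\C$ the limiting map is an affine bijection of $\C$, so a standard perturbation/degree argument yields $W(T)=\C$, reducing to the case just treated.
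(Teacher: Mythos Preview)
Your overall strategy---pair a weakly null unit vector with a second vector carrying the opposite sign of the real part and use a two-dimensional intermediate value argument---is close in spirit to the paper's proof, but there is a genuine gap in the step where you claim weak nullness of the resulting vector $w_n$.

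Concretely: you allow $(f_k)$ to have a nonzero weak subsequential limit $f$, and then try to repair this by choosing the orthonormal system $(e_n)$ inside $\{f\}^\perp$ and arranging $|\langle e_j,f_{k_n}\rangle|<2^{-n}$. But these conditions only control inner products of $f_{k_n}$ against the $e_j$'s, not against a general $g\in H$. Since $w_n=\cos\theta_n\,e_n+\sin\theta_n\,f_{k_n}$ with $f_{k_n}\stackrel{w}{\to}f$, testing against $g=f$ gives $\langle w_n,f\rangle\to(\lim\sin\theta_n)\|f\|^2$, which is nonzero unless $\sin\theta_n\to 0$. The intermediate value theorem gives you \emph{some} $\theta_n\in[0,\pi/2]$ with vanishing real part, but since the cross terms $\langle Te_n,f_{k_n}\rangle+\langle Tf_{k_n},e_n\rangle$ are completely uncontrolled, there is no reason $\theta_n$ should approach~$0$. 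So weak nullness of $w_n$ is not established. The paper avoids this by an explicit (non-IVT) construction: it writes $u_n=x_n+r_n\e^{\I\theta_n}y_n$, chooses $\theta_n$ so that the cross term becomes purely imaginary, and then solves for $r_n$ explicitly; the ratio condition $\langle Tx_n,x_n\rangle/\langle Ty_n,y_n\rangle\to 0$ forces $r_n\to 0$, whence $u_n-x_n\to 0$ strongly and weak nullness is inherited from $(x_n)$ regardless of whether $(y_n)$ is weakly null.

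There is a second gap in your treatment of the case $W(T)=\C$: after making the real part vanish you invoke the bound $\im\langle Tw_n,w_n\rangle\in[0,c]$, but that bound came from the strip hypothesis and is unavailable here. The paper handles this by iterating the construction once more, now on the imaginary axis, to force $\langle Tv_n,v_n\rangle=0$ exactly. Your reduction of the non-closable case to $W(T)=\C$ is fine and essentially reproduces the cited Kato result.
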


\begin{proof}
In the case when $\overline{W(T)}$ is a strip, or a line which we regard as a special case of a strip of zero width, we can always assume without loss of generality that $W(T)$ is a strip containing $\R$.

Let $(x_n)_{n\in\N}\subset\dom(T)$ with $\|x_n\|=1$ and $x_n\stackrel{w}{\to}0$ as $n\to\infty$. If $(\langle Tx_n,x_n\rangle)_{n\in\N}$ is bounded, then it has a convergent subsequence whose limit belongs to $W_{\!e}(T)$ and hence $W_{\!e}(T)\neq\emptyset$.
If $(\langle Tx_n,x_n\rangle)_{n\in\N}$ is unbounded, we can assume without loss of generality that
$0<\re\langle Tx_n,x_n\rangle\to\infty$ in all cases. To prove the existence of some $\lm\in W_{\!e}(T)$, 
we proceed in (at most) two steps, one to control the real part and, in the case when $W(T)=\C$, one for the imaginary part.

Since $\overline{W(T)}$ is either a strip containing $\R$ or $W(T)=\C$, we can choose $(y_n)_{n\in\N}\subset \dom(T)$ with $\|y_n\|=1$ and 
$$0> \langle Ty_n,y_n\rangle\tolong -\infty, \quad \frac{\langle Tx_n,x_n\rangle}{\langle Ty_n,y_n\rangle}\tolong 0, \quad n\to\infty.$$
It is not difficult to verify that, for every $n\in\N$ there exists $\theta_n\in [0,2\pi)$ such that 
$$\alpha_n:=\e^{\I\theta_n}\langle Ty_n,x_n\rangle+\e^{-\I\theta_n}\langle Tx_n,y_n\rangle\in \I\R.$$
Define
$$r_n: = \sqrt{\frac{\re\langle Tx_n,x_n\rangle}{|\langle Ty_n,y_n\rangle|}}>0, \quad u_n:=x_n+r_n\e^{\I\theta_n}y_n,\quad n\in\N.$$
Then $r_n\to 0$ and hence $\|u_n\|\to 1$ and $u_n\stackrel{w}{\to}0$ as $n\to\infty$.
Moreover, 
$$\langle Tu_n,u_n\rangle=\im \langle Tx_n,x_n\rangle+r_n \alpha_n \in\I\R,\quad n\in\N.$$
Now, with $x_n':=u_n/\|u_n\|$, $n\in\N$, it is easy to see that $\|x_n'\|=1$ and $x_n'\stackrel{w}{\to}0$  as $n\to\infty$.
If $\langle Tx_n',x_n'\rangle=\langle Tu_n,u_n\rangle / \|u_n\|^2\in\I\R$, $n\in\N$, are uniformly bounded, then again $W_{\!e}(T)\neq\emptyset$. \
This is always the case if $W(T)$ is a strip and hence the proof is complete in this case.

So it remains to consider the case that $W(T)=\C$ and $\langle Tx_n',x_n'\rangle \in \I\R$, $n\in\N$, is not uniformly bounded,
without loss of generality $\langle Tx_n',x_n'\rangle\to \I\infty$ as $n\to\infty$.
Since $W(T)=\C$, there exists $(y_n')_{n\in\N}\subset\dom(T)$ such that  $\|y_n\|=1$ and
\beq\label{eq.yn}\langle Ty_n',y_n'\rangle\in \I (-\infty,0), \quad \langle Ty_n',y_n'\rangle\tolong -\I\infty, \quad \frac{\langle Tx_n',x_n'\rangle}{\langle Ty_n',y_n'\rangle}\tolong 0, \quad n\to\infty.\eeq
One may check that, for every $n\in\N$, there exist unique $\theta_n'\in [0,2\pi)$, $r_n'>0$~with
\begin{align*}
& \alpha_n':=\e^{\I\theta_n'}\langle Ty_n',x_n'\rangle+\e^{-\I\theta_n'}\langle Tx_n',y_n'\rangle \in \I(-\infty,0],\\
& \frac{1}{r_n'}-r_n'\left|\frac{\langle Ty_n',y_n'\rangle}{\langle Tx_n',x_n'\rangle}\right|=\left|\frac{\alpha_n'}{\langle Tx_n',x_n'\rangle}\right|.
\end{align*}
Using the last convergence in~\eqref{eq.yn}, we deduce that $r_n'\to 0$ as $n\to\infty$.
Now define $u_n':=x_n'+r_n'\e^{\I\theta_n'}y_n'$ and $v_n:=u_n'/\|u_n'\|$ for $n\in\N$. 
Then it is straightforward to check that $\|v_n\|=1$, $v_n\stackrel{w}{\to}0$ and 
$\langle Tv_n,v_n\rangle =0$, $n\in\N$; hence $0\in W_{\!e}(T)$.

The last claim is immediate from the first claim and the fact that if $T$ is densely defined and $W(T)\neq \C$, then $T$ is closable, see 
\cite[Thm.~V.3.4]{Kato}.
\end{proof}

\begin{prop}
\label{proplinesinWeT}
If there exist $z\!\in\!W_{\!e}(T)$ and $w\in\C\backslash\{0\}$ with $z+w(0,\infty)\!\subset\!W(T)$,  then $z+w [0,\infty)\!\subset\! W_{\!e}(T)$. 
\end{prop}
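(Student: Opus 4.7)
The plan is to exploit the affine invariance of the essential numerical range ($W_e(\alpha T + \beta I) = \alpha W_e(T) + \beta$ for $\alpha \in \C\setminus\{0\}$ and $\beta \in \C$) to reduce to the case $z=0$, $w=1$; the hypotheses become $0 \in W_e(T)$ and $(0,\infty) \subset W(T)$, and the conclusion $[0,\infty) \subset W_e(T)$ follows once I establish $t \in W_e(T)$ for every $t > 0$.

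Fix $t > 0$. Let $(x_n) \subset \dom(T)$ be a witness for $0 \in W_e(T)$ (so $\|x_n\|=1$, $x_n \stackrel{w}{\to} 0$, and $\langle Tx_n, x_n\rangle \to 0$), and choose $y_n \in \dom(T)$ with $\|y_n\|=1$ and $\langle Ty_n, y_n\rangle = s_n$, for some $s_n \to \infty$ with $s_n \in (0,\infty)$; such $y_n$ exist because $(0,\infty) \subset W(T)$. I take the candidate vectors in the form $u_n := x_n + r_n \e^{\I\theta_n} y_n$ and compute
\[
\langle Tu_n, u_n\rangle = \langle Tx_n, x_n\rangle + r_n \alpha_n + r_n^2 s_n, \qquad \|u_n\|^2 = 1 + 2 r_n \gamma_n + r_n^2,
\]
where $\alpha_n := \e^{\I\theta_n}\langle Ty_n, x_n\rangle + \e^{-\I\theta_n}\langle Tx_n, y_n\rangle$ and $\gamma_n := \re(\e^{\I\theta_n}\langle y_n, x_n\rangle)$. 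Following the phase-rotation step from the proof of Proposition \ref{WeTnonempty}, I will select $\theta_n$ so that $\alpha_n \in \R$; of the two such choices differing by $\pi$, I pick the one making $\beta_n := \alpha_n - 2t\gamma_n \geq 0$. Imposing $\re\langle Tu_n, u_n\rangle = t\|u_n\|^2$ then reduces to the quadratic equation $r^2(s_n - t) + r\beta_n + (\re\langle Tx_n, x_n\rangle - t) = 0$, whose positive root
\[
r_n = \frac{2(t - \re\langle Tx_n, x_n\rangle)}{\beta_n + \sqrt{\beta_n^2 + 4(s_n - t)(t - \re\langle Tx_n, x_n\rangle)}}
\]
satisfies $r_n \leq \sqrt{(t - \re\langle Tx_n, x_n\rangle)/(s_n - t)} \to 0$ because $\beta_n \geq 0$ and $s_n \to \infty$.

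It follows that $\|u_n - x_n\| = r_n \to 0$, so $u_n \stackrel{w}{\to} 0$ (using $x_n \stackrel{w}{\to} 0$) and $\|u_n\| \to 1$; hence $v_n := u_n/\|u_n\|$ defines unit vectors with $v_n \stackrel{w}{\to} 0$. By construction $\re\langle Tu_n, u_n\rangle = t\|u_n\|^2$, and since $\alpha_n$, $s_n \in \R$, also $\im\langle Tu_n, u_n\rangle = \im\langle Tx_n, x_n\rangle \to 0$; dividing by $\|u_n\|^2 \to 1$ yields $\langle Tv_n, v_n\rangle \to t$, and therefore $t \in W_e(T)$. The hardest part of the argument will be the correct sign-choice in the phase rotation ensuring $\alpha_n \in \R$ and $\beta_n \geq 0$, from which the decay $r_n \to 0$ is derived; the remainder is routine bookkeeping in the spirit of Proposition \ref{WeTnonempty}.
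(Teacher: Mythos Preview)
Your proof is correct and follows the same overall strategy as the paper: reduce to $z=0$, $w=1$, then build the witnessing vectors as combinations $u_n=x_n+r_n\e^{\I\theta_n}y_n$ of a weakly null sequence realising $0\in W_{\!e}(T)$ and vectors $y_n$ with $\langle Ty_n,y_n\rangle=s_n\to\infty$.

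The execution differs in one interesting technical point. The paper exploits the weak convergence $x_n\stackrel{w}{\to}0$ to pass to a subsequence $(x_{n_k})_{k\in\N}$ with $|\langle Ty_k,x_{n_k}\rangle|<1/k$; this lets it make only the \emph{single} cross term $\e^{-\I\theta_k}\langle Tx_{n_k},y_k\rangle$ real and non-negative, solve the simpler equation $r_k^2 s_k+r_k|\langle Tx_{n_k},y_k\rangle|=\lambda$, and dispose of the remaining cross term $r_k\e^{\I\theta_k}\langle Ty_k,x_{n_k}\rangle$ by the subsequence bound. You instead avoid any subsequence extraction by making the \emph{full} cross sum $\alpha_n$ real and, via the additional sign choice $\beta_n\ge 0$, controlling the positive root of the resulting quadratic. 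Your route is a bit more computational (the quadratic analysis and the two-step phase selection) but self-contained; the paper's route trades the quadratic analysis for the subsequence trick, which is shorter once one spots it.
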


\begin{proof}
Without loss of generality take $z=0$ and $w=1$, which can always be arranged by shift of origin and rotation.
Hence there exists $(x_n)_{n\in\N}\subset\dom(T)$ with $\|x_n\|=1$, $x_n\stackrel{w}{\to}0$ and $\langle Tx_n,x_n\rangle \to 0$ as $n\to\infty$.
Let $\lm\in[0,\infty)$ be arbitrary. 
By the assumption $(0,\infty)\!\subset\!W(T)$, there exists $(y_k)_{k\in\N}\!\subset\!\dom(T)$ with $\|y_k\|=1$ and $0\!<\!\langle Ty_k,y_k\rangle\!\to\!\infty $.
Since $x_n\stackrel{w}{\to}{0}$  as $n\to\infty$, we can choose a strictly increasing sequence $(n_k)_{k\in\N}\subset\N$ such that
\beq\label{eq.ykxnk}|\langle y_k,x_{n_k}\rangle|<\frac 1 k, \quad |\langle T y_k,x_{n_k}\rangle|<\frac 1 k, \quad k\in\N.\eeq
Now define
$$u_k:=x_{n_k}+r_k\e^{\I\theta_k} y_k, \quad k\in\N,$$
with $r_k\geq 0$ and $\theta_k\in [0,2\pi)$ such that
$$r_k^2\langle Ty_k,y_k\rangle +r_k |\langle Tx_{n_k},y_k\rangle|=\lm, \quad \e^{-\I\theta_k}\langle Tx_{n_k},y_k\rangle\geq 0, \quad k\in\N.$$
Note that $r_k\to 0$ since $\langle Ty_k,y_k\rangle\to \infty$, and hence $\|u_k\|\to 1$, $u_k\stackrel{w}{\to}0$ as $k\to\infty$.
By direct calculation,
$$\langle Tu_k,u_k\rangle =\langle Tx_{n_k},x_{n_k}\rangle + \lm + r_k \e^{\I \theta_k}\langle Ty_k,x_{n_k}\rangle,\quad k\in\N,$$
and the latter converges to $\lm$ by $\langle Tx_{n_k},x_{n_k}\rangle\to 0$, $r_k\to 0$  as $k\to\infty$ and by the second estimate in~\eqref{eq.ykxnk}.
Now, with $v_k:=u_k/\|u_k\|$, $k\in\N$, it is easy to see that $\|v_k\|=1$, $v_k\stackrel{w}{\to}0$ and $\langle Tv_k,v_k\rangle\to\lm\in W_{\!e}(T)$.
\end{proof}

\begin{corollary}\label{corlines}
\begin{enumerate}[leftmargin=19pt,itemindent=0pt]
\item[\hspace{-2mm}\rm i)]
If \,$W(T)$ is a line, then so is $W_{\!e}(T)$ and thus $W_{\!e}(T)\!=\!W(T)$.
\item[\rm ii)]
If \,$\overline{W(T)}$ is a strip, then $W_{\!e}(T)$ is a strip or a line.
\item[\rm iii)]
If \,$\overline{W(T)}$ is a half-plane and $W_{\!e}(T)\!\neq\!\emptyset$, then $W_{\!e}(T)$ is a~half-plane.
\item[\rm iv)]
If \,$W(T)=\C$, then $W_{\!e}(T)=\C$, and vice versa.
\end{enumerate}
\end{corollary}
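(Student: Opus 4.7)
The plan is to deduce each part of the corollary from Propositions~\ref{propWeclosed-convex}, \ref{WeTnonempty} and \ref{proplinesinWeT}. In every case I will pick a base point $z_0\in W_{\!e}(T)$ (nonempty by Prop.~\ref{WeTnonempty} in (i), (ii), (iv), and by hypothesis in (iii)) and apply Prop.~\ref{proplinesinWeT} with one or several directions $w\in\C\setminus\{0\}$ to manufacture rays in $W_{\!e}(T)$; convexity and closedness (Prop.~\ref{propWeclosed-convex}) then assemble these rays into the claimed geometric object, while the matching upper bound comes from $W_{\!e}(T)\subset\overline{W(T)}$.

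Cases (i), (iii), (iv) are direct. In (i), after a rigid motion $W(T)=\R$ (a convex set dense in $\R$ equals $\R$), the rays $z_0\pm(0,\infty)$ both lie in $W(T)$, so Prop.~\ref{proplinesinWeT} with $w=\pm 1$ gives $\R\subset W_{\!e}(T)\subset\R$. In (iv), $W(T)=\C$ means every ray $z_0+w(0,\infty)$ lies in $W(T)$, forcing $W_{\!e}(T)=\C$; conversely $\overline{W(T)}=\C$ implies $W(T)=\C$ since a convex set dense in $\C$ has nonempty interior equal to the interior of its closure. In (iii), WLOG $\overline{W(T)}=\{\im z\geq 0\}$, so the open upper half-plane lies in $W(T)$; for $z_0\in W_{\!e}(T)$ with $\im z_0>0$ the directions $w=\pm 1$ and $w=\I$ all give rays in $W(T)$ whose convex hull in $W_{\!e}(T)$ is $\{\im z\geq\im z_0\}$, while for $\im z_0=0$ one first applies $w=\I$ (the vertical ray lies in the open upper half-plane) to lift $z_0$ into the interior; letting $\im z_0\downarrow y^*:=\inf\{\im z:z\in W_{\!e}(T)\}$ and using closedness yields $W_{\!e}(T)=\{\im z\geq y^*\}$.

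Case (ii) is the main obstacle. WLOG $\overline{W(T)}=\R+\I[0,b]$ with $b>0$ (else (i) applies), so $W(T)$ contains the open strip. The goal is to show $z_0+\R\subset W_{\!e}(T)$ for every $z_0\in W_{\!e}(T)$: once established, $W_{\!e}(T)=\R+\I Y$ for the closed convex set $Y:=\{\im z:z\in W_{\!e}(T)\}\subset[0,b]$, which is necessarily a closed interval $[c,d]$, making $W_{\!e}(T)$ a substrip if $c<d$ or a horizontal line if $c=d$. When $\im z_0\in(0,b)$ this is immediate, since the rays $z_0\pm(0,\infty)$ stay at the interior height $\im z_0$ and Prop.~\ref{proplinesinWeT} with $w=\pm 1$ applies.

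The hard part is the boundary case $\im z_0\in\{0,b\}$: horizontal rays from $z_0$ lie on the boundary and need not meet $W(T)$, and no vertical ray from $z_0$ fits inside $W(T)$ either since the strip has finite height, so Prop.~\ref{proplinesinWeT} is not directly applicable. To overcome this I would adapt its construction by replacing the real-valued sequence $(y_k)\subset\dom(T)$ with $\langle Ty_k,y_k\rangle>0$ used there by a sequence whose numerical values lie in the open strip and tend to $+\infty$ in direction~$1$, e.g.\ $\langle Ty_k,y_k\rangle=k+\I b/2$. Forming $u_k:=x_{n_k}+r_k\e^{\I\theta_k}y_k$ with $\theta_k:=\arg\langle Tx_{n_k},y_k\rangle$ and $r_k>0$ determined by the same real defining relation $r_k^2\re\langle Ty_k,y_k\rangle+r_k|\langle Tx_{n_k},y_k\rangle|=\lambda$, one again has $r_k\to 0$; the extra imaginary contribution $r_k^2\im\langle Ty_k,y_k\rangle=r_k^2 b/2$ vanishes automatically, and the cross term involving $\langle Ty_k,x_{n_k}\rangle$ is controlled by the standard choice of $n_k$ coming from $x_n\stackrel{w}{\to}0$. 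Normalisation yields $z_0+\lambda\in W_{\!e}(T)$ for every $\lambda>0$, and the same argument with direction $-1$ completes the claim.
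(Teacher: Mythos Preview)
Your proof is correct and follows the same route as the paper, which simply cites Propositions~\ref{propWeclosed-convex}, \ref{WeTnonempty}, and \ref{proplinesinWeT} and leaves the geometry implicit. For (i), (iii), (iv) you are just spelling those details out. In (ii) you go further: you correctly observe that when $z_0\in W_{\!e}(T)$ lies on the boundary of the strip, no ray $z_0+w(0,\infty)$ can stay inside $W(T)$, so Proposition~\ref{proplinesinWeT} is not directly applicable---a point the paper's one-line argument glosses over. Your fix, rerunning that proposition's construction with $\langle Ty_k,y_k\rangle=k+\I b/2$ chosen in the open strip and noting that the extra contribution $\I r_k^2\, b/2\to 0$, is valid. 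One small simplification you may wish to record: the adapted construction is only needed when $W_{\!e}(T)$ lies entirely on a single edge of the strip; if $W_{\!e}(T)$ meets the open strip, or meets both edges, then convexity produces interior points $z_0'$ arbitrarily close to $z_0$, and $z_0+\R\subset W_{\!e}(T)$ follows from the interior case by closedness.
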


\begin{proof}
Using the convexity of $W_{\!e}(T)$ by Proposition~\ref{propWeclosed-convex} and Corollary~\ref{corlines}, the claims follow from Proposition~\ref{proplinesinWeT} if we know that $W_{\!e}(T)\neq\emptyset$.
The latter was proved in Proposition \ref{WeTnonempty} for cases i), ii) and iv), and it is assumed in case iii). 
The converse in iv) follows from $W_{\!e}(T)\subset \overline{W(T)}$. 
\end{proof}

The following example shows that $W_{\!e}(T)=\emptyset$ is possible if $W(T)$ is a half-plane.

\begin{example}
\label{ex:WeTempty} 
For the diagonal operator $T={\rm diag}\,(n+\I (-1)^n n^2: n\in\N_0)$ in the Hilbert space $H=l_2(\N_0)$, we have 
$\overline{W(T)} = \{ z\in\C: \re z \ge 0\}$  but $W_{\!e}(T)=\emptyset$; the latter follows from 
the equivalent characterization $W_e(T)=W_{e2}(T)= \bigcap\, \{\overline{W(T+K)}: K\in L(H),\,\text{rank}<\infty\}$ which we will 
prove in Theorem~\ref{thmequivdefforWe} below.
\end{example}

The following technical lemma for the case that $W(T)=\C$ is needed for the proof of 
two of the main results of this paper, Theorem \ref{thmequivdefforWe} and Theorem \ref{thmdescloux2}.

\begin{lemma} 
\label{newlem}
Suppose that $W(T)=\C$. 
\begin{enumerate}
\item[\rm i)] Let $x,y\in\dom(T)$, $\|x\|=\|y\|=1$, be linearly independent. Then,
for all but at most three $t\in\C$, 
\begin{equation}\label{eq:C}
W(T|_{(y+tx)^{\perp}\cap\dom(T)})=\C.
\end{equation}
\item[\rm ii)]
Let $y\in \dom(T)$, $\|y\|=1$, be such~that $\{y\}^{\perp}\cap\dom(T) \ne \{0\}$.
Then, for every $\epsilon>0$, there exists a $w_\epsilon \in\dom (T)$, $\|w_\epsilon\|=1$, with
\begin{equation}\label{eq.c2}
W(T|_{\{w_\epsilon\}^{\perp}\cap\dom(T)})=\C,  
\quad
| \langle Tw_\epsilon,w_\epsilon\rangle - \langle Ty,y\rangle |< \epsilon.
\end{equation}
\end{enumerate}
\end{lemma}

\begin{proof}
If~\eqref{eq:C} holds for every $t\in\C$, there is nothing to show.
Hence assume that~\eqref{eq:C} is false for some $t\in\C$; without loss of generality $t=0$.
Then there exists 
\begin{equation}\label{eq.lmW}
\lm\notin\overline{W(T|_{\{y\}^{\perp}\cap\dom(T)})}.
\end{equation} 
Since $W(T) = \C$, we also have $W_{\!e}(T)=\C$ by Corollary \ref{corlines} iv) and hence
there exists $(x_k)_{k\in\N}\subset \dom(T)$ with $\|x_k\|=1$, $x_k\stackrel{w}{\to}0$ and $\langle Tx_k,x_k\rangle\to\lm$ 
as $k\to\infty$.

Suppose first that $\sup_{k\in\N}\left|\langle Tx_k,y\rangle\right|<\infty$. If, for every $k\in\N$, we write $x_k=x_k^{(1)}+x_k^{(2)}
\in{\rm span}\{y\}\oplus \{y\}^{\perp}$, then $x_k^{(2)}\in \{y\}^{\perp} \cap \dom(T)$ since $x_k$, $y \in \dom(T)$.
Since $x_k^{(1)}=\langle x_k,y\rangle y\to 0$ and $\|x_k^{(2)}\|\to 1$ as $k\to\infty$,  we arrive at
\begin{align*}
\langle Tx_k,x_k\rangle&=\langle Tx_k^{(1)},x_k\rangle+\langle Tx_k^{(2)},x_k^{(1)}\rangle+\langle Tx_k^{(2)},x_k^{(2)}\rangle\\
&=\langle x_k,y \rangle \langle Ty,x_k \rangle + \langle y, x_k\rangle \big(\langle Tx_k,y\rangle-\langle x_k,y\rangle\langle Ty,y\rangle \big)
+\langle Tx_k^{(2)},x_k^{(2)}\rangle
\end{align*}
for $k\in\N$. Observe that all terms on the right hand side except the last tend to $0$ as $k\to\infty$. 
Since the left hand side has limit $\lambda$ and $x_k^{(2)} \in \{y\}^{\perp}\cap\dom(T)$, we obtain that
$\lambda = \lim_{k\to\infty} \langle Tx_k^{(2)},x_k^{(2)}\rangle \in\overline{W(T|_{\{y\}^{\perp}\cap\dom(T)})}$, 
a contradiction to~\eqref{eq.lmW}. 

Hence $\sup_{k\in\N}\left|\langle Tx_k,y\rangle\right|\!=\!\infty$; without loss of generality $\left|\langle Tx_k,y\rangle\right|\!\to\!\infty$ 
as $k\!\to\!\infty$. 
Since $x,y$ are linearly independent, there exists $u\in{\rm span}\{x,y\}$ with $u\perp y$ and $\|u\|=1$. 
Replacing $(x_k)_{k\in\N}$ by a subsequence if necessary, we may assume that the sequence 
$ \left(\frac{\langle Tx_k,u \rangle}{\langle Tx_k, y\rangle} \right)_{k\in\mathbb N} $
converges to a limit $\alpha\in {\mathbb C}\cup\{\infty\}$ as $k\to\infty$.
Then, for all $s\in\C$ with $s\in\C\setminus\{0,\alpha\}$,
\begin{equation}\label{eq.uref0}
\frac{\langle Tx_k,(y+s u)\rangle}{\langle Tx_k,(u-sy)\rangle} 
\tolong \frac{1+s\alpha}{\alpha-s} \in {\mathbb C},
\quad k\to\infty,
\end{equation} 
with the convention that the right hand side is 
$s$ if $\alpha=\infty$.
We define 
$$
y_s:=y+s u\in \dom(T), \quad u_s:=u-sy\in \{y_s\}^{\perp}\cap\dom(T), \quad s\in\C.
$$
Since $|\langle Tx_k,y\rangle|\to\infty$ we see from (\ref{eq.uref0}) that 
\begin{equation}\label{eq.ueps}
|\langle Tx_k,y_s \rangle|\tolong \infty,\quad |\langle Tx_k,u_s \rangle|\tolong \infty, \quad k\to\infty. 
\end{equation}
Now we fix $s\in\C\setminus\{0,\alpha\}$. By~\eqref{eq.uref0},
\begin{equation}\label{eq.ueps2}
\sup_{k\in\N} \left| \frac{\langle Tx_k,y_s\rangle}{\langle Tx_k,u_s\rangle} \right| < \infty.  
\end{equation}
 For arbitrary $z\in\C$ and $k\in\N$, set
\begin{align*}
\beta_k&:=\frac{\|u_s\|^2 z-\langle Tu_s,u_s\rangle}{\langle Tx_k,u_s\rangle} \in\C, \\
v_k&:=u_s+\beta_k\left(x_k-\frac{\langle x_k,y_s\rangle}{\|y_s\|^2}y_s\right)\in\{y_s\}^{\perp}\cap\dom(T).
\end{align*}
By~\eqref{eq.ueps}, $|\langle Tx_k,u_s\rangle|\to\infty$ and hence $\beta_k\to 0$ as $k\to\infty$, from which it follows at once that  $\|v_k\|^2\to \|u_s\|^2$
as $k\to\infty$. Also by \eqref{eq.ueps2}, $(\beta_k\langle Tx_k,y_s\rangle)_{k\in\N}$ is bounded, and of course we already know 
that $x_k \stackrel{w}{\to} 0$ as $k\to\infty$.
Using these two facts, together with the convergence $\langle Tx_k,x_k\rangle\to\lambda$  as $k\to\infty$, a laborious direct calculation shows that
$\langle Tv_k,v_k\rangle\to \|u_s\|^2 z$ as $k\to\infty$.
Hence $z\in\overline{W(T|_{\{y_s\}^{\perp}\cap\dom(T)})}.$
Since $z$ was chosen arbitrarily, we arrive at 
$$
W(T|_{\{y_s\}^{\perp}\cap\dom(T)})=\C, \quad s\in\C\setminus\{0,\alpha\}.
$$
Since $y,x\in\dom(T)$ are linearly independent, we can write $x=a y+b u$ for some $a,b\in\C$, $b\neq 0$.
Now we obtain~\eqref{eq:C} for all $t\in\C\setminus\{0,-1/a,(b/\alpha -a)^{-1}\}$ since
$$
\{y+tx\}^{\perp}=\{y_s\}^{\perp}\quad\text{with}\quad s=\frac{b}{a+1/t}.
$$

ii)
If $W(T|_{\{y\}^{\perp}\cap\dom(T)})=\C$, we choose $w_\epsilon=y$. Otherwise, by i), we can choose $w_{\eps}=\frac{y+t x}{\|y+t x\|}$ with $t>0$ so small that the second assertion in~\eqref{eq.c2} is satisfied.
\end{proof}

\section{Equivalent characterizations of $W_{\!e}(T)$}\label{section:equiv}

Next we show that two of the other characterizations of $W_{\!e}(T)$ established in~\cite{fillmore}
are equivalent to the definition of $W_{\!e}(T)$ given in the previous section also in the unbounded case, and another one 
is equivalent for densely defined operators.

However, there is one characterization which, in the unbounded case, is equivalent only under some additional conditions, 
even if $T$ is densely defined and closable; a counter-example will show that these conditions are also necessary, 
see Remark~\ref{newrem}~iv) and Example \ref{exequivWe}.

\begin{theorem}\label{thmequivdefforWe}
Let $\cV$ be the set of all finite-dimensional subspaces $V\subset H$. 
\vspace{1mm} Define
   \begin{align*}
    W_{e1}(T)&:=\hspace{2mm} \underset{V \in\cV}{\bigcap}\hspace{2mm}\overline{W(T|_{V^{\perp}\cap\dom(T)})},\\
    W_{e2}(T)&:=\hspace{-1mm}\underset{K\in L(H)\atop {\rm rank }\,K<\infty}{\bigcap}\hspace{-1mm}\overline{W(T+K)},\\
    W_{e3}(T)&:=\hspace{0mm}\underset{K\in L(H)\atop K \text{ compact}}{\bigcap}\!\overline{W(T+K)},\\
    W_{e4}(T)&:=\hspace{2mm}\left\{\lm\in\C:\,\exists\,(e_n)_{n\in\N}\subset\dom(T)\text{ orthonormal with }
		                             \langle Te_n,e_n\rangle\stackrel{n\to\infty}{\longrightarrow}\lm\right\}. \vspace{2mm}
   \end{align*}
Then, in general,
\begin{align}
\label{Weinclgeneral}
&W_{e1}(T)\subset W_{e4}(T) \subset W_{e2}(T)= W_{e3}(T)=W_{\!e}(T).\\
\intertext{%
If $\,\overline{\dom(T)}=H$, then
}
\label{Weincldd}
&W_{e1}(T)\subset W_{e4}(T)= W_{e2}(T)= W_{e3}(T)= W_{\!e}(T).
\end{align}%
If $\,\overline{\dom(T)\cap\dom(T^*)}=H$ or if $\,W(T)\neq\C$, then
\begin{equation}
\label{Weinclspecial}
W_{ei}(T)=W_{\!e}(T), \quad i=1,2,3,4.
\end{equation}
\end{theorem}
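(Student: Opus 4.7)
My strategy is to establish the chain of inclusions one link at a time, with each link requiring a slightly different technique: easy inclusions first, then the central equality $W_{e2}=W_{e3}=W_e$, then the dense-domain upgrade $W_{e4}=W_e$, and finally the strongest-hypothesis upgrade $W_{e1}=W_e$.

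\emph{Easy links valid in general.} I would first prove $W_{e1}(T)\subset W_{e4}(T)$ by a direct induction: having chosen orthonormal $e_1,\dots,e_{n-1}\in\dom(T)$, the hypothesis that $\lambda\in\overline{W(T|_{\operatorname{span}(e_1,\dots,e_{n-1})^\perp\cap\dom(T)})}$ yields a unit $e_n$ in that same orthogonal complement with $|\langle Te_n,e_n\rangle-\lambda|<1/n$. The chain $W_e(T)\subset W_{e3}(T)\subset W_{e2}(T)$ and $W_{e4}(T)\subset W_{e2}(T)$ all rest on the same observation: if $(x_n)\subset\dom(T)$ is a weakly null unit sequence (in particular if $(x_n)$ is orthonormal), then $|\langle Kx_n,x_n\rangle|\leq\|K^*x_n\|\to 0$ for every compact $K$, so $\langle(T+K)x_n,x_n\rangle\to\lambda$. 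The reverse inclusion $W_{e2}(T)\subset W_{e3}(T)$ closes by norm-approximation of each compact $K$ by finite-rank operators $K_n$, using $|\langle(T+K)x,x\rangle-\langle(T+K_n)x,x\rangle|\leq\|K-K_n\|$.

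\emph{The central inclusion $W_{e2}(T)\subset W_e(T)$.} If $W(T)=\C$, then Corollary~\ref{corlines} iv) gives $W_e(T)=\C$ and the inclusion is trivial. Otherwise $\overline{W(T)}$ lies in a closed half-plane, which after shift and rotation I take to be $\{\operatorname{Re} z\leq 0\}$. I fix a dense sequence $(h_k)\subset H$, set $V_n:=\operatorname{span}(h_1,\dots,h_n)$, and apply the $W_{e2}$-hypothesis to the finite-rank operator $K_n:=-M_nP_{V_n}$ with $M_n\to\infty$: this delivers $x_n\in\dom(T)$ of unit norm with $|\langle Tx_n,x_n\rangle-M_n\|P_{V_n}x_n\|^2-\lambda|<1/n$. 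Taking real parts and using $\operatorname{Re}\langle Tx_n,x_n\rangle\leq 0$ forces $M_n\|P_{V_n}x_n\|^2\leq -\operatorname{Re}\lambda+1/n$, so $\|P_{V_n}x_n\|\to 0$ and hence (by density of $\bigcup V_n$) $x_n\stackrel{w}{\to}0$. A subsequential limit $M_n\|P_{V_n}x_n\|^2\to c\in[0,-\operatorname{Re}\lambda]$ yields $\langle Tx_n,x_n\rangle\to\lambda+c\in W_e(T)$, so at least $W_e(T)\neq\emptyset$. To close the gap from $\lambda+c$ to $\lambda$ itself, I use Corollary~\ref{corlines} iii) to deduce that $W_e(T)$ is a half-plane $\{\operatorname{Re} z\leq c'\}$; the inclusion $\lambda+c\in W_e(T)$ gives $\operatorname{Re}\lambda+c\leq c'$, hence $\operatorname{Re}\lambda\leq c'$ and $\lambda\in W_e(T)$. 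The strip case uses the same argument with both signs of the penalty (bracketing $\lambda$ in real part), and the line case follows directly from Corollary~\ref{corlines} i); for other convex geometries the penalty can be rotated, $K_n=-M_ne^{i\theta_n}P_{V_n}$, over the supporting directions of $\overline{W(T)}$, and convexity of $W_e(T)$ finishes the job.

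\emph{Domain upgrades.} Under $\overline{\dom(T)}=H$, the inclusion $W_e(T)\subset W_{e4}(T)$ follows by Gram--Schmidt: from a weakly null witness $(x_n)\subset\dom(T)$, extract $n_k$ so rapidly that $|\langle x_{n_k},f\rangle|$ is less than a summable $\eta_k$ for every $f$ in the finite set $\{e_j,Te_j:j<k\}$, set $e_k=(x_{n_k}-\sum_{j<k}\langle x_{n_k},e_j\rangle e_j)/\|\cdot\|\in\dom(T)$, and verify that each cross-term in the expansion of $\langle Te_k,e_k\rangle-\langle Tx_{n_k},x_{n_k}\rangle$ vanishes in the limit; density of $\dom(T)$ ensures sufficiently rich choices of $e_j$. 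Under the stronger hypothesis $\overline{\dom(T)\cap\dom(T^*)}=H$ I refine this by choosing $e_j\in\dom(T^*)$, converting the otherwise dangerous cross-term $\langle Tx_{n_k},e_j\rangle=\langle x_{n_k},T^*e_j\rangle$ into one that tends to $0$ by weak convergence alone, and I can additionally start the induction inside $V_0^\perp$ for any prescribed finite-dimensional $V_0$, yielding $\lambda\in W_{e1}(T)$. Under the alternative hypothesis $W(T)\neq\C$, the supporting-line geometry of $\overline{W(T)}$ replaces the role of $T^*$; in the remaining sub-case $W(T)=\C$ (within the strong hypothesis), Lemma~\ref{newlem} ii) supplies the essential replacement: at each induction step I swap the current unit vector $y$ for a $w_\epsilon$ whose orthogonal complement in $\dom(T)$ still has numerical range $\C$, so the inductive construction of an orthonormal sequence in $V_0^\perp\cap\dom(T)$ reaching $\lambda$ never stalls.

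\emph{Expected main difficulty.} The genuinely hard step is $W_{e2}(T)\subset W_e(T)$ in the half-plane case: the penalty $K_n=-M_nP_{V_n}$ forces weak convergence and $\|P_{V_n}x_n\|\to 0$, but the product $M_n\|P_{V_n}x_n\|^2$ may remain a positive constant $c$, so one obtains only $\lambda+c\in W_e(T)$ rather than $\lambda$ itself. Bridging this gap depends crucially on the half-plane/strip classification of $W_e(T)$ from Corollary~\ref{corlines} together with Proposition~\ref{proplinesinWeT}, and the argument must be carried out separately in each geometric regime of $\overline{W(T)}$.
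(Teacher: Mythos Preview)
Your easy links are correct and match the paper, and the penalty idea $K_n=-M_nP_{V_n}$ for $W_{e2}\subset W_e$ is close in spirit to the paper's. However, two steps have genuine gaps.

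\textbf{The gap from $\lambda+c$ to $\lambda$ in $W_{e2}\subset W_e$.} Your closing step invokes Corollary~\ref{corlines}~iii), whose hypothesis is that $\overline{W(T)}$ \emph{is} a half-plane, not merely contained in one; if $\overline{W(T)}$ is bounded or a proper sector, that corollary says nothing about the shape of $W_e(T)$. The fallback ``rotate the penalty over supporting directions and use convexity'' does not close the gap either: you obtain $\lambda+c_\theta e^{i\theta}\in W_e(T)$ for various $c_\theta\ge 0$, never $\lambda$ itself, and no convex combination of such points is $\lambda$. The paper instead argues by contradiction: assume $\lambda_0\notin W_e(T)$, separate by a closed half-plane $\mathcal H\supset W_e(T)$ with $\lambda_0\notin\mathcal H$, and choose the penalty direction $e^{i\theta}$ so that it simultaneously satisfies $\cos\theta<0$ (forcing boundedness via $\overline{W(T)}\subset\{\re z\ge 0\}$) \emph{and} points into the complement of $\mathcal H$. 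Then the limit $\lambda_0+e^{i\theta}\mu$ with $\mu\ge 0$ still lies outside $\mathcal H$, yet also in $W_e(T)\subset\mathcal H$, a contradiction. The single case where the two angular constraints are incompatible is reduced to a strip and handled by rotation.

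\textbf{The dangerous cross-term in the Gram--Schmidt for $W_e\subset W_{e4}$.} In your expansion of $\langle Te_k,e_k\rangle$, the term $\overline{\langle x_{n_k},e_j\rangle}\,\langle Tx_{n_k},e_j\rangle$ has its first factor small by your choice of $n_k$, but $|\langle Tx_{n_k},e_j\rangle|$ can be as large as $\|Tx_{n_k}\|$, which is unbounded and not controlled by weak convergence of $x_{n_k}$; your controls on $\langle x_{n_k},e_j\rangle$ and $\langle x_{n_k},Te_j\rangle$ do not touch it. You yourself flag this term as ``dangerous'' under the $T^*$-hypothesis, but it is equally dangerous under mere dense domain. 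The paper bypasses Gram--Schmidt entirely: it proves (Claim~1) that if $W_{e1}(T)\subsetneq W_e(T)$ then $W(T)=\C$, using a \emph{non-orthogonal} projection $P$ with $\ran(P^*)\subset\dom(T)$, so that $TP^*$ is bounded of finite rank---precisely neutralising the term that blocks your approach---and (Claim~2) that if $W(T)=\C$ then $W_{e4}(T)=\C$, by iterating Lemma~\ref{newlem}~ii). These two claims together yield $W_e\subset W_{e4}$ under dense domain, and Claim~1 also delivers the $W(T)\neq\C$ half of~\eqref{Weinclspecial}. For the $\overline{\dom(T)\cap\dom(T^*)}=H$ half of~\eqref{Weinclspecial}, the paper constructs an explicit bounded finite-rank $K$ (again via a non-orthogonal projection, now with $\ran(P^*)\subset\dom(T)\cap\dom(T^*)$) such that $\overline{W(T+K)}$ lies in a half-plane excluding $\lambda$, proving $W_{e2}\subset W_{e1}$ directly; your ``choose $e_j\in\dom(T^*)$'' is incompatible with building the $e_j$ from the $x_{n_j}$ by Gram--Schmidt, since the $x_{n_j}\in\dom(T)$ need not lie in $\dom(T^*)$.
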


\smallskip

\begin{rem}
\label{newrem}
If $\overline{\dom(T)}=H$, then
\begin{enumerate}
\item[i)] $\overline{\dom(T)\cap\dom(T^*)}= H$ necessitates that $T$ is closable, see \cite[Thm.\ 5.3]{MR566954};
\item[ii)] $W(T)\neq\C$ necessitates that $T$ is closable, see \cite[Thm.~V.3.4]{Kato};
\item[iii)] if $\dom(T)\subset\dom(T^*)$, then $\overline{\dom(T)\cap\dom(T^*)}=H$ is satisfied;
\end{enumerate}
in particular, \eqref{Weinclspecial} holds if $T$ is a symmetric operator. Unlike the bounded case, 
\begin{enumerate}
\item[iv)] the inclusion $W_{e1} \subset W_{\!e}(T) \ (=\!W_{ei}(T),\, i\!=\!2,3,4)$ in \eqref{Weinclgeneral} can be strict 
if $\overline{\dom(T)\cap\dom(T^*)}\ne H$ and $W(T)=\C$, see~Example \ref{exequivWe}.
\end{enumerate}
\end{rem}

\begin{proof}[Proof of Theorem {\rm \ref{thmequivdefforWe}}]
%
%
In the following sequence of inclusions 
\begin{equation}
\label{bern1}
W_{e4}(T)\subset W_{\!e}(T)\subset W_{e3}(T) = W_{e2}(T)
\end{equation}
all `$\subset$' are obvious and the reverse inclusion $W_{e3}(T)\supset W_{e2}(T)$ follows since every compact operator is the norm limit of 
finite rank operators. 

Now we prove $ W_{e1}(T)\subset W_{e4}(T).$
Let $\lm \in W_{e1}(T)$ and $e_0\in\dom(T)$ with $\|e_0\|=1$.
To show that $\lm \in W_{e4}(T)$, we inductively construct a sequence $(e_n)_{n\in\N}\subset \dom(T)$ such that, for $n\in\N$, 
$$ \|e_n\|=1, \quad \langle e_n,e_k\rangle =0, \quad k<n, \quad |\langle Te_n,e_n\rangle-\lm| <\frac{1}{n}.$$
For this, let $n\in\N$ and assume that $e_0,\dots,e_{n-1}$ with the described properties have been constructed.
Set $V_n:={\rm span}\{e_0,\dots,e_{n-1}\}\in\cV$.
Since $\lm \in W_{e1}(T)$, we have $\lm \in\overline{W(T|_{V_n^{\perp}\cap\dom(T)})}$ and hence there exists $e_n\in V_n^{\perp}\cap\dom(T)$
with $\|e_n\|=1$ and $|\langle T e_n,e_n\rangle-\lm|<1/n$. Now the claim follows by induction over $n\in\N$.

Next we show $W_{e3}(T)\subset W_{\!e}(T)$.
First we consider the case that $W(T)$ is contained in a half-plane, without loss of generality $\overline{W(T)} \!\subset\! \left\{ z\!\in\!\C\!: \right.$ $\left. 0 \le {\rm Re\,} z\right\}$. 
If $W_{\rm e3}(T) \subset W_{\rm e}(T)$ were false, there would exist a $\lambda_0 \in W_{\rm e3}(T) \setminus W_{\!e}(T)$. 
By Proposition \ref{propWeclosed-convex}, $W_{\!e}(T)$ is closed and convex. Hence, by the strong separation property, see e.g.\ \cite[Thm.~3.6.9]{MR3497790}, there exists a  closed half-plane $\cH$ with $\cH \supset W_{\rm e}(T)$ but $\lambda_0 \notin \cH$.
Then there exist $\theta_{\cH} \!\in\! (-\pi, \pi]$ and $z_0 \!\in\! \C$ with $\cH = z_0 + \{ z\in\C :\theta_{\cH} - \frac \pi 2 \le {\rm arg\,} z \le \theta_{\cH} + \frac \pi 2 \}$. 
For every angle $\theta \in (\frac \pi 2, \frac {3\pi} 2)$, we take an arbitrary positive compact operator $K$ and find $(x_n)_{n\in\N} \subset D(T)$, $\|x_n\|=1$, without loss of generality $x_n \stackrel{w}{\to} x$ as $n\to \infty$, such that
\[\lambda_n =  \langle Tx_n,x_n\rangle -  {\rm e}^{{\rm i} \theta} n \langle Kx_n,x_n\rangle \tolong \lambda_0, \quad n\to \infty.\]
If $\theta_{\cH} \ne \pi$, we can choose $\theta \!\in\! (\frac \pi 2, \frac {3\pi} 2)$ so that $\theta \!\in\! [\theta_{\cH} \!+\! \frac \pi 2, \theta_{\cH} \!+\! \frac {3 \pi}2 ]$. Then
$\cos \theta < 0$ and thus
\[ 0 \le {\rm Re\,} \langle Tx_n,x_n\rangle  = \cos \theta \, n \langle Kx_n,x_n\rangle + {\rm Re}\, \lambda_n  \le {\rm Re}\, \lambda_n \tolong {\rm Re}\, \lambda_0, \quad n\to \infty. \]
Hence there is a convergent subsequence $({\rm Re\,} \langle Tx_{n_k},x_{n_k}\rangle)_{k\in \N}$, which implies that also $n_k \langle Kx_{n_k},x_{n_k}\rangle \to \mu \ge 0$, $k\to\infty$; in particular, $\langle Kx_{n_k}, x_{n_k}\rangle \to 0$, $k\to \infty$, which necessitates $x=w\!-\!\lim_{k\to\infty} x_{n_k} = 0$. Altogether we obtain
the contradiction
\[ \langle Tx_{n_k},x_{n_k}\rangle  \tolong {\rm e}^{{\rm i} \theta}  \mu + \lambda_0 \in W_{\rm e}(T) \setminus {\cH}, \quad k \to\infty. \]
If $\theta_{\cH} \!=\! \pi$, i.e.\ $W_{\rm e}(T)$ is contained in a vertical strip, then the same is true for $\overline{W(T)}$ by Proposition~\ref{proplinesinWeT}.
In this case we can rotate everything such that we are in the case already proved.
Now we assume that $W(T)=\C$. Then $W_{\!e}(T)=\C$ by Proposition~\ref{proplinesinWeT} and hence $W_{e3}(T)=\C=W_{\!e}(T)$ by \eqref{bern1}. 
This completes the proof of \eqref{Weinclgeneral}.

For \eqref{Weincldd} it remains to be proved that $W_{\!e}(T)\subset W_{e4}(T)$ if $T$ is densely defined, which is the most difficult part. 
The inclusion will be a consequence of the following two properties. 

\textit{Claim.} If $\overline{\dom(T)}=H$ and

\begin{quote}
1) if the inclusion $W_{e1}(T)\subset W_{\!e}(T)$ is strict, then $W(T)=\C$;\\
2) if $W(T)=\C$, then $W_{e4}(T)=\C$.
\end{quote}

In fact, if  $W(T)\neq\C$, then Claim 1) implies that $W_{\!e}(T)=W_{e1}(T)\subset W_{e4}(T)$; if $W(T)=\C$, then 
Corollary \ref{corlines} iv) yields $W_{\!e}(T)=\C$ and Claim 2) shows that also $W_{e4}(T)=\C$.

\textit{Proof of Claim} 1):
Suppose there exists $\lm\in W_{\!e}(T) \setminus W_{e1}(T)$. Then there is a sequence $(x_n)_{n\in\N}\subset\dom(T)$ with $\|x_n\|=1$, $x_n\stackrel{w}{\to}0$ and $\langle Tx_n,x_n\rangle\to\lm$ as $n\to\infty$ 
and $V\in\cV$ with $\lm\notin\overline{W(T|_{V^{\perp}\cap\dom(T)})}$.
We introduce a (not necessarily orthogonal) projection $P$ with $\ran(P)=V$  and $\ran(P^*)\subset\dom(T)$.
To this end, choose any basis $\{\phi_1,\dots,\phi_k\}$ of $V$. Now, since $\overline{\dom(T)}=H$, 
we can choose a biorthogonal set $\{\psi_1,\dots,\psi_k\}$ in $\dom(T)$ so that $\langle \phi_n,\psi_m\rangle=\delta_{n,m}$.
Define $P\in L(H)$~by
\begin{align*}
Px:=\sum_{n=1}^k\langle x,\psi_n\rangle \phi_n \in V, \quad x\in H.\\[-2mm]
\intertext{Then $P^2=P$ and so $P$ is a projection. 
\vspace{-2mm}Also,}
P^*x=\sum_{n=1}^k\langle x,\phi_n\rangle\psi_n\in \dom(T), \quad x\in H.
\end{align*}
Note that
\beq\label{eq.Txnxn} \langle Tx_n,x_n\rangle=\langle TP^*x_n,x_n\rangle+\langle T(I-P^*)x_n,(I-P^*)x_n\rangle+\langle T(I-P^*)x_n,P^*x_n\rangle.\eeq
Since $P^*$ and $TP^*$ are compact, we conclude $P^*x_n\to 0$ and $\langle TP^*x_n,x_n\rangle\to 0$ as $n\to\infty$.
For an arbitrary $z\in\C$ define 
$$y_n:=z P^*x_n+(I-P^*)x_n\in\dom(T), \quad n\in\N.$$ 
Note that $\|y_n\|\to 1$ and $y_n\stackrel{w}{\to}0$, and hence $\langle TP^*y_n,y_n\rangle\to 0$ as $n\to\infty$.

First assume that $(\langle T(I-P^*)x_n,(I-P^*)x_n\rangle)_{n\in\N}$ is bounded; without loss of generality it is convergent, 
with limit $\mu\in \overline{W(T|_{V^\perp \cap \dom(T)})}$.
Since, by assumption, the latter set does not contain $\lm$, we have 
$$c:=\lim_{n\to\infty}\langle T(I-P^*)x_n,P^*x_n\rangle=\lm-\mu\neq 0.$$
Moreover, for $n\in\N$,
\beq\label{eq.Tynyn}
\langle Ty_n,y_n\rangle=\langle TP^*y_n,y_n\rangle+\langle T(I-P^*)x_n,(I-P^*)x_n\rangle+\overline{z}\langle T(I-P^*)x_n,P^*x_n\rangle
\eeq
and hence $(\langle Ty_n,y_n \rangle)_{n\in\N}$ converges to $ \mu+\overline{z}c\in W_{\!e}(T)$. Since $z\in\C$ was chosen arbitrarily, 
we arrive at $W_{\!e}(T)=W(T)=\C$.

Now assume that $(\langle T(I-P^*)x_n,(I-P^*)x_n\rangle)_{n\in\N}$ is unbounded. 
Using~\eqref{eq.Txnxn}, together with $\langle Tx_n,x_n\rangle\to \lm$, $\langle TP^*x_n,x_n\rangle\to 0$  as $n\to\infty$, 
we conclude that also $(\langle T(I-P^*)x_n,P^* x_n\rangle)_{n\in\N}$ is unbounded.
Taking the difference of~\eqref{eq.Tynyn} and \eqref{eq.Txnxn} and using that 
$\langle TP^*y_n,y_n\rangle\to 0$ as $n\to\infty$, we infer that  
$(\langle Ty_n,y_n\rangle)_{n\in\N}$ is unbounded for every $z\in\C\backslash\{1\}$. The arbitrary argument of $z\in\C\backslash\{1\}$ is reflected in an arbitrary angle at which $(\langle Ty_n,y_n\rangle)_{n\in\N}$ diverges. This implies $W(T)=\C$.

\textit{Proof of Claim} 2):
Assume that $W(T)=\C$ and let $\lm\in \C$ be arbitrary.  
To show that $\lm\in W_{e4}(T)$, we shall prove, by induction, that there exist sequences 
$(e_n)_{n\in\mathbb N}$ and $(y_n)_{n\in\mathbb N}$
such that, for all $n\in\mathbb N$, we have  $\| e_n \| = \| y_n \| = 1$, the orthogonality conditions
 $y_{n+1}\in\{e_1,\ldots,e_{n}\}^\perp$ and $e_{n+1}\in\{e_1,\ldots,e_n\}^\perp$ hold, 
the condition
\[ W(T|_{\{e_1,\ldots,e_n\}^\perp\cap \dom(T)}) = {\mathbb C} \]
is satisfied and 
\[ \langle Ty_n,y_n\rangle =  \lambda, \;\;\; \left| \langle Te_n,e_n\rangle - \langle Ty_n,y_n\rangle \right| < 1/n. \]
To this end, we will employ Lemma \ref{newlem}~ii); here we will use that for every finite codimensional subspace $N\subset H$ the intersection $N\cap \dom(T)$ is dense in $N$, and hence in particular not $\{0\}$, since $T$ is densely defined, see \cite[Lemma~2.1]{MR0113146}.

At the first step of the induction we use the fact that $W(T)=\mathbb C$ to choose a unit vector $y_1\in\dom(T)$ such that
$\lambda = \langle Ty_1,y_1\rangle$. We apply Lemma \ref{newlem}~ii) with $\epsilon=1$ and $y_1$ in the r\^{o}le of $y$ to 
deduce the existence of a unit vector $e_1\in \dom(T)$ such that 
\[ W(T|_{\{e_1\}^\perp\cap \dom(T)}) = {\mathbb C}, \;\;\; \left| \langle Te_1,e_1\rangle - \langle Ty_1,y_1\rangle \right| < 1. \]
Since $W(T|_{\{e_1\}^\perp\cap \dom(T)}) = {\mathbb C}$ we can choose a unit vector $y_2$ orthogonal to $e_1$ such that 
$ \langle Ty_2,y_2\rangle = \lambda$, and the first step of the induction is complete.

Now suppose we have constructed $e_1,\ldots,e_{n-1}$ and  $y_1,\ldots,y_n$. 
Define the space $X_{n-1}:=\{e_1,\ldots,e_{n-1}\}^\perp$ of codimension $n-1$ and let
$\widehat{T} := P_{X_{n-1}}T|_{X_{n-1}\cap \dom(T)}$ where $P_{X_{n-1}}:H\to X_{n-1}$ is the orthogonal projection in $H$ onto $X_{n-1}$. We apply Lemma \ref{newlem}~ii) with $\widehat{T}$ in $X_{n-1}$ playing the 
r\^{o}le of $T$ and $y_n$ playing the r\^{o}le of $y$, together with the choice $\epsilon=\frac{1}{n}$, to deduce the existence of a unit vector 
$e_n\in\dom(T)$ orthogonal to all of $e_1,\ldots,e_{n-1}$, such that 
\begin{equation}
\label{ind} 
W(\widehat{T}|_{\{e_n\}^\perp\cap \dom(T)}) = W(T|_{\{e_1,\ldots,e_n\}^\perp\cap \dom(T)}) = {\mathbb C}, 
\end{equation}
and
\[ \left| \langle Te_n,e_n\rangle - \langle Ty_n,y_n\rangle \right| < 1/n. \]
Now \eqref{ind} allows us to choose a unit vector $y_{n+1}\in\dom(T)$, orthogonal to all of $e_1,\ldots, e_n$, such that 
$\langle \widehat{T}y_{n+1},y_{n+1}\rangle \!=\! \langle Ty_{n+1},y_{n+1}\rangle\!=\! \lambda$. The induction is thus~complete.

Finally, to show that \eqref{Weinclspecial} holds if $\overline{\dom(T)\cap\dom(T^*)}= H$ or if $W(T)\neq\C$, it remains to be proved that the inclusion 
$W_{e1}(T) \subset W_{ei}(T) = W_{\!e}(T)$, $i=2,3,4$, is an equality as well.

If $W(T)\neq\C$, this is immediate from Claim 1) above.
If $\overline{\dom(T)\cap\dom(T^*)}= H$, we will show that $W_{e2}(T) \subset W_{e1}(T)$. 
Otherwise, there would exist $\lm\in W_{e2}(T)$ and $V\in \cV$ so that $\lm \notin \overline{W(T|_{V^{\perp}\cap\dom(T)})}$. 
After a possible shift and rotation we may assume that $\re\lm<0$ and 
$$\overline{W(T|_{V^{\perp}\cap\dom(T)})}\subset \cH^+:=\{z\in\R:\,\re z\geq 0\}.$$
Since ${\rm dim }\,V\!<\!\infty$ and $\dom(T)$ is dense in $H$, 
$V^{\perp}\!\cap\dom(T)$ 
is dense in $V^\perp$ and thus, in particular, $V^{\perp}\!\cap\dom(T)\neq \{0\}$. 
Let $\mu\in\overline{W(T|_{V^{\perp}\cap\dom(T)})}$.
As in the proof of Claim~1) above, now using that $\overline{\dom(T)\cap\dom(T^*)}= H$, there exists a (not ne\-cessarily orthogonal) 
projection $P\in L(H)$ with $\ran(P)\!=\!V$ and $\ran(P^*)\!\subset\!\dom(T)\cap\dom(T^*)$.~Define
$$K_0:=-TP^*-PT(I-P^*)+\mu P P^*, \quad \dom(K_0):=\dom(T).$$
Since $\ran(K_0)\subset {\rm span}(\ran(P)\cup\ran(TP^*))$ is finite-dimensional, the operator $K_0$ has finite rank.
The assumption that $\overline{\dom(T)\cap\dom(T^*)}= H$ implies that $\overline{\dom(T^*)}= H$ and hence $T$ is closable, see Remark \ref{newrem} i). This and $\ran(P^*)\subset\dom(T)\cap\dom(T^*)$ imply that the operators $\overline{T} P^* \supset TP^* $ and $(T^* P^*)^* \supset PT$ 
are bounded and hence so is~$K_0$. 
Since $\dom(K_0)=\dom(T)$ is dense, $K_0$ is closable and its closure $K:=\overline{K_0}\in L(H)$ has finite rank as well.
Note that $\dom(T+K)=\dom(T)=\dom(K_0)$ 
 and
\begin{align*}
T+K&=T+K_0
=\big((I-P)T(I-P^*)+\mu P P^*\big)|_{\dom(T)}.
\end{align*}
For arbitrary $x\in\dom(T)$, $\|x\|=1$, we set 
\begin{align*}
u\!:=\!P^*x, \quad v\!:=\!(I-P^*)x \!\in\! \ran(I-P^*)\cap\dom(T)  
\!=\! \ran(P)^{\perp}\cap\dom(T)=\!V^{\perp}\cap\dom(T).
\end{align*}
Then 
$$\langle (T+K)x,x\rangle=\langle T v,v\rangle+\mu \|u\|^2\in\{tz+s\mu:\,z\in W(T|_{V^{\perp}\cap\dom(T)}),\,t,s\geq 0\}.$$ 
Since $\mu$ was chosen in $\overline{W(T|_{V^{\perp}\cap\dom(T)})}\subset \cH^+\!$, we conclude $\re \langle (T+K)x,x\rangle\geq 0$. 
This implies that
$\overline{W(T+K)}\!\subset\! \cH^+$ and so $\lm \in W_{e2}(T)\!\subset\!\overline{W(T+K)}\!\subset\! \cH^+$, 
a contradiction to $\re \lm \!<\!0$.
This proves~$W_{e2}(T) \subset W_{e1}(T)$ and hence \eqref{Weinclspecial}. 
\end{proof}

\begin{rem}\label{remequivchar}
\begin{enumerate}[label=\rm{\roman{*})},leftmargin=19pt,itemindent=0pt] 
\item  For bounded $T$ the identity $W_{\!e}(T)=W_{e1}(T)$ is not explicitly stated in~\cite[Theorem~(5.1)]{fillmore}, 
but it may be read off from the proof.

\item In the bounded case there is yet another characterization of $W_{\!e}(T)$, see~\cite[Theorem~(5.1) (5)]{fillmore}, 
$$W_{\!e}(T)=W_{e5}(T):=\big\{\lm\in\C:\,\exists\,Q\in\cQ\,\text{ with }\,Q(T-\lm)Q \text{ compact}\big\}$$
where $\cQ$ is the set of all projections $Q\!\in\! L(H)$ with ${\rm rank}\, Q \!=\! \infty$.
Note that if $T$ is un\-bounded, one has to add the condition $\ran(Q)\!\subset\!\dom(T)$ for $Q\in\cQ$ in the definition of $W_{e5}(T)$.
For the purpose of this paper this characterization does not play a~r\^{o}le. We only mention that, if $\overline{\dom(T)\cap\dom(T^*)}=H$, 
then $W_{e5}(T)\!=\!W_{ei}(T)\!=\!W_{\!e}(T)$, $i=1,2,3,4$,  also in the unbounded case.
\end{enumerate}
\end{rem}

The next observation is useful for determining the essential numerical range in concrete examples such as the following Example~\ref{exequivWe}.

\begin{lemma}\label{lemma.V0}
Let $V_0\in\mathcal V$. Then
$W_{e1}(T)=W_{e1}(T|_{V_0^{\perp}\cap\dom(T)})$.
\end{lemma}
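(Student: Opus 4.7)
The plan is to prove equality by two mutual inclusions, both of which rest on simple manipulations of finite-dimensional subspaces and their orthogonal complements. Note that $W_{e1}(T|_{V_0^{\perp}\cap \dom(T)})$ is computed with respect to the underlying Hilbert space $\overline{V_0^\perp}$ and the set of finite-dimensional subspaces of $V_0^\perp$; since for any finite-dimensional $V'\subset V_0^\perp$ the orthogonal complement of $V'$ inside $V_0^\perp$ equals $V_0^\perp \cap V'^\perp$ (with $V'^\perp$ taken in $H$), we have
\[
W_{e1}(T|_{V_0^{\perp}\cap\dom(T)})=\bigcap_{\substack{V'\in\cV\\ V'\subset V_0^\perp}}\overline{W(T|_{V_0^\perp\cap V'^\perp\cap\dom(T)})}.
\]

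For the inclusion $W_{e1}(T)\subset W_{e1}(T|_{V_0^{\perp}\cap\dom(T)})$, I would fix any finite-dimensional $V'\subset V_0^\perp$ and observe that $V_0+V'\in\cV$ with $(V_0+V')^\perp=V_0^\perp\cap V'^\perp$. The defining intersection for $W_{e1}(T)$ then gives
\[
W_{e1}(T)\subset \overline{W(T|_{(V_0+V')^\perp\cap\dom(T)})}=\overline{W(T|_{V_0^\perp\cap V'^\perp\cap\dom(T)})},
\]
and taking the intersection over all admissible $V'$ yields the claim.

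For the reverse inclusion $W_{e1}(T|_{V_0^{\perp}\cap\dom(T)})\subset W_{e1}(T)$, the main idea is to reduce an arbitrary $V\in\cV$ to one contained in $V_0^\perp$ by setting $V':=P_{V_0^\perp}(V)$, where $P_{V_0^\perp}$ is the orthogonal projection onto $V_0^\perp$. Then $V'\subset V_0^\perp$ is finite-dimensional, and the key step is the subspace inclusion
\[
V_0^\perp\cap V'^\perp \;\subset\; V^\perp,
\]
which holds because any $x\in V_0^\perp\cap V'^\perp$ satisfies, for every $v\in V$, $\langle x,v\rangle=\langle x,P_{V_0^\perp}v\rangle+\langle x,P_{V_0}v\rangle=0$, the first term vanishing by $P_{V_0^\perp}v\in V'$ and the second by $x\in V_0^\perp$. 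Intersecting with $\dom(T)$ preserves this inclusion, so
\[
\overline{W(T|_{V_0^\perp\cap V'^\perp\cap\dom(T)})}\subset \overline{W(T|_{V^\perp\cap\dom(T)})},
\]
and since $V\in\cV$ was arbitrary this proves the inclusion.

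There is no real obstacle here: the statement is a purely set-theoretic/linear-algebraic compatibility of the $W_{e1}$-definition with restriction to the orthogonal complement of a fixed finite-dimensional subspace. The only point requiring care is matching the definition of $W_{e1}$ for the restricted operator (where $\cV$ is interpreted within $V_0^\perp$) with the formula used above, which is straightforward.
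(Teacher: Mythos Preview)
Your proof is correct and follows essentially the same approach as the paper: both arguments reduce to re-indexing the defining intersection via subspaces of the form $V_0+V'$ (equivalently, $V\supset V_0$). The only cosmetic difference is that the paper treats $T|_{V_0^\perp\cap\dom(T)}$ as an operator in $H$ and indexes over all $V\in\cV$, which makes the inclusion $W_{e1}(T|_{V_0^\perp\cap\dom(T)})\subset W_{e1}(T)$ immediate (since $V^\perp\cap V_0^\perp\cap\dom(T)\subset V^\perp\cap\dom(T)$) and puts the one-line work into the opposite inclusion, whereas your choice to index over $V'\subset V_0^\perp$ shifts the small projection argument to that direction instead.
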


\begin{proof}
The inclusion `$\supset$' is obvious from the definition in Theorem \ref{thmequivdefforWe}. 
The inclusion `$\subset$' follows from 
\begin{align*}
W_{e1}(T)
&=\underset{V\in\mathcal V}{\bigcap}\overline{W(T|_{V^{\perp}\cap\dom(T)})}
\subset \underset{V\in\mathcal V \atop V\supset V_0}{\bigcap}\overline{W(T|_{V^{\perp}\cap\dom(T)})}\\
&=\underset{V\in\mathcal V}{\bigcap}\overline{W(T|_{V^{\perp}\cap V_0^{\perp}\cap\dom(T)})}
=W_{e1}(T|_{V_0^{\perp}\cap\dom(T)}).
\qedhere
\end{align*}
\end{proof}

The next example shows that the strict inclusion $W_{e1}(T) \subsetneq W_{\!e}(T)$ in \eqref{Weincldd} may occur if $\overline{\dom(T)}=H$, but $\overline{\dom(T) \cap \dom(T^*)} \ne H$ and $W(T)=\C$.

\begin{example}\label{exequivWe}
Let $T_0$ be a selfadjoint operator in a Hilbert space $H$ with domain $\dom(T_0)$ and $\sigma(T_0)=\sigma_{e}(T_0)=\R$.
We perturb $T_0$ by an unbounded linear operator $S$ with $\dom(S)=\dom(T_0)$ of the form $S=Q\Phi$ where 
$\Phi : H\to\C$ is an unbounded linear functional which is $T_0$-bounded and $Q:\C\to H$, $Qz=zg$ where $g\in H$ is a fixed element.
Note that $\dom(S)$ is dense since so is $\dom(T_0)$.

Then $S$ is $T_0$-compact since $S(T_0+\I)^{-1}=Q\Phi(T_0+\I)^{-1}$ is the product of the bounded finite rank operator $Q$ with the bounded operator $\Phi(T_0+\I)^{-1}$. Hence for 
$T:=T_0+S$ with $\dom(T)=\dom(T_0)=\dom(S)$ we also have $\sigma_{e}(T)=\sigma_{e}(T_0)=\R$.

Since $\Phi$ is unbounded, $f\mapsto \langle Sf,y \rangle = (\Phi f) \langle g,y  \rangle$ is continuous if and only if $y \in \{g\}^\perp$. Thus $\dom(S^*)=\{g\}^\perp$ is not dense and so $S$ is not closable. 
Together with the fact that $S$ is densely defined it follows that $W(S)=\C$, see \cite[Thm.~V.3.4]{Kato}. 

Now we show that $\overline{\dom(T) \cap \dom(T^*)}\ne H$. This follows from $\overline{\dom(S^*)}\ne H$ if we prove that 
$\dom(T) \cap \dom(T^*)= \dom(T_0) \cap \dom(S^*)$. For the latter we use the inclusions $\dom(T_0) \cap \dom(S^*) = \dom(T) \cap \dom(S^*) \subset \dom(T)$, $T_0+S^* \subset (T_0+S)^* = T^*$ and that, for 
$y \in \dom(T) \cap \dom(T^*) = \dom(T_0) \cap \dom(T^*) $ and $x\in \dom(S)=\dom(T_0)$,
\[
  x\mapsto \langle Sx,y \rangle = \langle (T-T_0) x,y \rangle = \langle T x,y \rangle - \langle T_0 x,y \rangle  
\]
is continuous on $\dom(S)$ and hence $y\in\dom(S^*)$.

We claim that $W(T)=\C$, which implies that $W_{\!e}(T)=\C$ by Corollary \ref{corlines}~iv). Otherwise, $\overline{W(T)}$ would be contained in some closed half-plane ${\mathbb H}$. 
Since $\R=\sigma_{e2}(T)\subset \overline{W(T)}\subset {\mathbb H}$, this half-plane must be of the form ${\mathbb H}=\{z\in\C: \im z \le h\}$ or ${\mathbb H}=\{z\in\C: \im z \ge -h\}$ with $h\ge 0$. 
E.g.\ in the former case, let $h_0 > h$. Since $W(S)=\C$, there exists $f\in\dom(S)=\dom(T)$, $\|f\|=1$, so that $\langle Sf,f \rangle= \I h_0$. But then $\langle Tf,f\rangle =\langle T_0f,f\rangle +\langle Sf,f\rangle$ 
and since $T_0$ is selfadjoint $\im \langle Tf,f\rangle = \im \langle Sf,f \rangle = h_0 > h$, a contradiction to $W(T)\subset {\mathbb H}$.

Applying Lemma \ref{lemma.V0} with $V_0={\rm span} \{g\}$, we obtain $W_{e1}(T)=W_{e1}(T|_{V_0^\perp \cap \dom (T)}) \!=\! W_{e1}(T_0|_{V_0^\perp \cap \dom (T)}) \!=\! W_{e1}(T_0)$. Since $W(T_0)\!=\!\sigma(T_0)\!=\!\R$, Corollary \ref{corlines} implies $W_e(T_0)\!=$ $W(T_0)\!=\!\R$. Theorem \ref{thmequivdefforWe} for $T_0$ shows $W_{e1}(T)\!=\!W_{e1}(T_0)\!=\!W_e(T_0)\!=\!\R$.

Altogether, in this example, $\overline{\dom(T)\cap\dom(T^*)}\ne H$, $W(T)=\C$ and 
\beq
\label{nikolaus}
  \R = W_{e1}(T) \subsetneq W_{e4}(T) = W_{e2}(T) = W_{e3}(T) =  W_{\!e}(T) = \C,
\eeq
which shows that the conditions for \eqref{Weinclspecial} in Theorem \ref{thmequivdefforWe} are necessary.

A concrete example of operators $T_0$ and $\Phi$ as above is $T_0 f \!=\! \I f'$ in $L^2(\R)$ with $\dom(T_0)\!=\!H^1(\R)$ and $\Phi=\delta_0$, 
i.e.\ $\Phi f := f(0)$ with $\dom (\Phi) = H^1(\R)$, so that $Sf\!=\!f(0) g$ with some fixed $g\in L^2(\R)$. In this case $Tf=\I f' + f(0) g$ with $\dom (T)=H^1(\R)$ is an example for the above abstract model
for which \eqref{nikolaus} holds.
\end{example}

It is well-known that for a non-selfadjoint operator $T$ in a Hilbert space there are several different, and in general not equivalent, 
definitions of essential spectrum, denoted by $\sigma_{ek}(T)$, $k=1,\dots5$, see e.g.\ \cite[Chapter~IX]{edmundsevans}, which satisfy the inclusions 
$$
\sigma_{e1}(T) \subset \sigma_{e2}(T) \subset \sigma_{e3}(T) \subset \sigma_{e4}(T) \subset \sigma_{e5}(T).
$$ 
By Proposition \ref{propWeclosed-convex} we already know that for the essential spectrum 
$\sigma_{e2}(T)= \sigma_e(T)$ from \cite[Chapter~IX]{edmundsevans}, which we use here, 
$ {\rm conv}\, \sigma_e(T)  \subset W_{\!e}(T)$, and hence
\begin{equation} 
\label{sigmae2incl}
{\rm conv}\, \sigma_{e1}(T) \subset {\rm conv}\, \sigma_{e2}(T) = {\rm conv}\, \sigma_{e}(T) \subset W_{\!e}(T).
\end{equation}
The following remark collects the inclusions for all the essential spectra.
 
\begin{rem}\label{remrelcompchar} 
\begin{enumerate}
\item[i)] If $\dom(T)\cap\dom(T^*)$ is a core of $T^*$, then 
$${\rm conv}\,(\sigma_e(T)\cup\sigma_e(T^*)^*)\subset W_{\!e}(T).$$
\item[ii)] 
If $T$ is closed, then $\sigma_{e3}(T)=\sigma_e(T)\cup\sigma_e(T^*)^*$ and hence if, in addition, 
$\dom(T)\cap\dom(T^*)$ is a core of $T^*$, then
$$ {\rm conv}\,\sigma_{e3}(T)={\rm conv}\,\big(\sigma_e(T)\cup\sigma_e(T^*)^*\big)  \subset W_{\!e}(T).$$
\item[iii)] 
If $T$ is closed, 
then $\sigma_{e4}(T) \subset \hspace{-3mm} {\displaystyle \bigcap\limits_{K\in L(H), \atop K\,\text{compact}}} \hspace{-2mm}\sigma(T+K)$ and \vspace{-4mm}
if, in addition, $\sigma(T) \!\subset\! \overline{W(T)}$, \vspace{3mm}then 
$$ {\rm conv}\,\sigma_{e4}(T) \subset W_{e3}(T) = W_{\!e}(T);$$
in this case $\sigma_{e5}(T)=\sigma_{e4}(T)$ and hence also
$${\rm conv}\, \sigma_{e5}(T) \subset W_{\!e}(T).$$
\end{enumerate}
\end{rem}

\begin{proof}
i) By Proposition \ref{propWeclosed-convex}, see also \eqref{sigmae2incl}, it suffices to show that $\sigma_e(T^*)^*\!\subset\! W_{\!e}(T)$. If $\lm\!\in\!\sigma_e(T^*)^*$, there exist  $x_n\!\in\!\dom(T^*)$, $n\in \N,$ with $\|x_n\|=1$, $x_n\stackrel{w}{\to}0$ and $\|(T^*-\overline{\lm})x_n\|\to 0$ as $n\to\infty$. 
Since  $\dom(T)\cap\dom(T^*)$ is a core of $T^*$, we can construct  $\widetilde x_n\in\dom(T)\cap \dom(T^*)$, $n\in \N,$ with $\|\widetilde x_n\|=1$, $\widetilde x_n\stackrel{w}{\to}0$ and $\|(T^*-\overline{\lm}) \widetilde x_n\|\to 0$ as $n\to\infty$.  
This implies $\langle (T-\lm)\widetilde x_n,\widetilde x_n\rangle=\langle \widetilde x_n, (T^*-\overline{\lm})\widetilde x_n\rangle\to 0$ as $n\to\infty$ and hence $\lm\in W_{\!e}(T)$.

ii) The claim follows from the stated formula for $\sigma_{e3}(T)$ for closed $T$, which is a consequence of the Closed Range Theorem, see \cite[Thm.\ I.3.7]{edmundsevans}, and from claim i).

iii) The claim will follow from the stated inclusion for $\sigma_{e4}(T)$ for closed $T$, see the first part of the proof of \cite[Thm.\ IX.1.4]{edmundsevans} 
and note that $T$ need not have dense domain for the inclusion ``$\subset$'' therein, and from the equality $W_{\!e}(T)=W_{e3}(T)$ by Theorem \ref{thmequivdefforWe} if we show that the spectral inclusion $\sigma(T) \subset \overline{W(T)}$ implies the spectral inclusion $\sigma(T+K) \subset \overline{W(T+K)}$. 

To show the latter, suppose first that $\overline{W(T)}=\C$. 
Then also $\overline{W(T\!+\!K)}=\C$, and so the claim is immediate, since otherwise $\overline{W(T+K)}$ were contained in a half-plane and hence, since $K$ is bounded, also 
$\overline{W(T)}\!\subset\! \overline{W(T\!+\!K)}\!+\!\overline{B_{\|K\|}(0)}$ would be contained in a half-plane, a contradiction. 
If $\overline{W(T)}\!\ne\!C$, the convexity of $W(T)$ implies that the complement $\C\backslash \overline{W(T)}$ consists either of one component or of two components in which case $\overline{W(T)}$ is a strip. 
Then the same is true for $\overline{W(T\!+\!K)}$ since $K$ is bounded and so $W(T\!+\!K)\!\subset\!W(T)\!+\!\overline{B_{\|K\|}(0)}$. A Neumann series ar\-gu\-ment and the resolvent estimate 
$\|(T-\lambda)^{-1}\| \!\le\! 1/{\rm dist}\,(\lambda, W(T))$, $\lambda \!\notin\! \overline{W(T)}$, yield $\sigma(T+K)\!\subset\! \overline{W(T)}\!+ \!\overline{B_{\|K\|}(0)}$. 
The latter yields that in each component of $\C\setminus \overline{W(T\!+\!K)}$ there exists at least one point in $\rho(T+K)$ which implies $\sigma(T\!+\!K)\subset \overline{W(T\!+\!K)}$.
\end{proof}

For a bounded selfadjoint operator $T$, the essential numerical range is the convex hull of the essential spectrum, 
$W_{\!e}(T)={\rm conv}\,\sigma_e(T)$, see \cite[Corollary~5.1]{salinas}. 

An analogous result for unbounded selfadjoint operators 
does not hold; it may even happen that $\sigma_e(T)=\emptyset$ but $W_{\!e}(T)=\R$. 
In order to formulate the result for arbitrary selfadjoint operators,
we need the notion of extended essential spectrum. 

Note that there are different notions of the latter for closed operators using the one-point compactification of $\C$ or $\R$, see \cite{MR0291846}, 
and for selfadjoint operators using the two-point compactification of $\R$,  \cite{levitin}, which is needed here. 

\begin{definition}\label{defextended}
If $T$ is selfadjoint, we define the \emph{extended essential spectrum} $\widehat{\sigma}_e(T) \subset \R \cup \{+\infty,-\infty\}$ of $T$ 
as the set $\sigma_e(T)$ with $+\infty$ and/or $-\infty$ added if $T$ is unbounded from above and/or from below, and as $\sigma_e(T)$ if $T$ is bounded.
\end{definition}

\begin{theorem}\label{thm.selfadjoint}
If $\,T$ is unbounded and selfadjoint, then
$$W_{\!e}(T)={\rm conv}\big(\widehat{\sigma}_e(T)\big)\backslash\{\pm\infty\}.$$
\end{theorem}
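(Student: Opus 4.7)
Since $T$ is selfadjoint, $W(T)\subset\R$, so $W_{\!e}(T)\subset\R$; by Proposition \ref{propWeclosed-convex}, $W_{\!e}(T)$ is closed and convex, hence a (possibly empty) closed real interval, and $\mathrm{conv}\,\sigma_e(T)\subset W_{\!e}(T)$. The claimed identity $W_{\!e}(T)=\mathrm{conv}(\widehat\sigma_e(T))\setminus\{\pm\infty\}$ therefore reduces to the two endpoint equalities $\sup W_{\!e}(T)=\sup\widehat\sigma_e(T)$ and $\inf W_{\!e}(T)=\inf\widehat\sigma_e(T)$ in $[-\infty,+\infty]$. By the symmetry $T\leftrightarrow -T$, I focus on the upper endpoint throughout.

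For the upper bound $\sup W_{\!e}(T)\le\sup\widehat\sigma_e(T)$, nothing is needed if $T$ is unbounded above. So suppose $T$ is bounded above and, towards a contradiction, take $\lm\in W_{\!e}(T)$ with $\lm>\sup\sigma_e(T)$ (allowing $\sigma_e(T)=\emptyset$ with the convention $\sup\emptyset=-\infty$). Choose $\eps>0$ with $\lm-\eps>\sup\sigma_e(T)$. Then $\sigma(T)\cap[\lm-\eps,\infty)$ consists of at most finitely many isolated eigenvalues of finite multiplicity (no accumulation at $\sup\sigma_e(T)$ from above, none at $+\infty$ since $T$ is bounded above), so $P:=E_T([\lm-\eps,\infty))$ has finite rank and $TP$ is bounded. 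For $(x_n)\subset\dom(T)$ as in Definition \ref{defWe}, finite rank of $P$ yields $Px_n\to 0$ in norm and hence $\langle TPx_n,x_n\rangle\to 0$. The spectral theorem gives
\[ \langle T(I-P)x_n,x_n\rangle \;=\; \int_{(-\infty,\lm-\eps)}\!\mu\,d\|E_T(\mu)x_n\|^2 \;\le\;(\lm-\eps)\,\|(I-P)x_n\|^2 \;\tolong\;\lm-\eps, \]
so $\limsup_n\langle Tx_n,x_n\rangle\le\lm-\eps<\lm$, contradicting $\langle Tx_n,x_n\rangle\to\lm$.

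For the reverse inclusion, I produce $\lm\in W_{\!e}(T)$ for every finite $\lm\in(\inf\widehat\sigma_e(T),\sup\widehat\sigma_e(T))$; closedness of $W_{\!e}(T)$ together with $\sigma_e(T)\subset W_{\!e}(T)$ then yields the full set. Consider the case $T$ unbounded above, $\sigma_e(T)\ne\emptyset$, $\mu:=\inf\sigma_e(T)$ and $\lm>\mu$. Since $\mu\in\sigma_e(T)$, the projection $E_T([\mu-1/n,\mu+1/n])$ has infinite rank for every $n$, and I can inductively pick an orthonormal sequence $f_n\in\ran E_T([\mu-1/n,\mu+1/n])\subset\dom(T)$ with $\langle Tf_n,f_n\rangle\to\mu$. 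Since $T$ is unbounded above, $E_T([N,\infty))$ has infinite rank for every $N$, which forces infinitely many disjoint unit intervals $[M,M+1]$ with $E_T([M,M+1])\ne 0$; selecting $M_n\to\infty$ with $M_n>\mu+1/n$ from among these, I pick an orthonormal sequence $e_n\in\ran E_T([M_n,M_n+1])\subset\dom(T)$, so that $\langle Te_n,e_n\rangle\to+\infty$ and $f_n\perp e_n$ by spectral disjointness. Setting $x_n:=\alpha_n f_n+\beta_n e_n$ with $\alpha_n^2+\beta_n^2=1$ gives $\|x_n\|=1$ and $x_n\stackrel{w}{\to}0$ (as a bounded combination of orthonormal sequences). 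For $n$ large enough that $\langle Tf_n,f_n\rangle<\lm<\langle Te_n,e_n\rangle$, the equation $\alpha_n^2\langle Tf_n,f_n\rangle+\beta_n^2\langle Te_n,e_n\rangle=\lm$ has a unique solution with $\alpha_n^2,\beta_n^2\in(0,1)$, and then $\langle Tx_n,x_n\rangle=\lm$; hence $\lm\in W_{\!e}(T)$.

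The remaining situations are handled by the same machinery. If $\sigma_e(T)=\emptyset$ and $T$ is unbounded above but bounded below, the upper-bound argument with $\sup\sigma_e(T)=-\infty$ excludes every real $\lm$ from $W_{\!e}(T)$, giving $W_{\!e}(T)=\emptyset=\mathrm{conv}(\{+\infty\})\setminus\{+\infty\}$; the mirror case is symmetric. If $T$ is unbounded on both sides, combining the construction of paragraph three with its mirror (or, when $\sigma_e(T)=\emptyset$, using two orthonormal eigenvector sequences with eigenvalues tending to $\pm\infty$ as $(f_n),(e_n)$) realizes every $\lm\in\R$, so $W_{\!e}(T)=\R$. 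The principal obstacle lies in the reverse-inclusion construction of paragraph three: one must simultaneously secure orthonormality within each of the two auxiliary sequences, spectral orthogonality between them, weak nullity of the combined $x_n$, and the exact value $\lm=\langle Tx_n,x_n\rangle$, all through a careful choice of spectral intervals and of the quadratic weights $\alpha_n,\beta_n$.
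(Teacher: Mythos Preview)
Your proof is correct. It differs from the paper's in both directions of the argument, while sharing the same underlying spectral ideas.

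For the \emph{upper bound} (confinement of $W_{\!e}(T)$), the paper chooses a compact perturbation $K=(-T+s_e)E_T((-\infty,\lambda+\eps))$ and invokes the equality $W_{\!e}(T)=W_{e2}(T)$ from Theorem~\ref{thmequivdefforWe} to conclude $W_{\!e}(T)\subset\overline{W(T+K)}\subset[\lambda+\eps,\infty)$. You instead work directly in the definition of $W_{\!e}(T)$: the finite-rank projection $P=E_T([\lambda-\eps,\infty))$ kills the weakly null sequence, and a spectral-measure estimate bounds the remaining piece. The two arguments are cousins---both exploit that only a finite-rank spectral piece lies above $\sup\sigma_e(T)$---but yours avoids the detour through $W_{e2}$.

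For the \emph{reverse inclusion}, the difference is more substantial. The paper relies on the geometric machinery built up earlier: Corollary~\ref{corlines}~i) handles the non-semibounded case in one line (since $W(T)=\R$ is a line), and in the semibounded case Proposition~\ref{proplinesinWeT} (a single point $s_e\in W_{\!e}(T)$ together with the ray $(s_e,\infty)\subset W(T)$ forces $[s_e,\infty)\subset W_{\!e}(T)$) does all the work. You instead give an explicit spectral construction: pick orthonormal $(f_n)$ concentrated near $\inf\sigma_e(T)$ and orthonormal $(e_n)$ in spectral subspaces escaping to $+\infty$, observe that the cross term $\langle Tf_n,e_n\rangle$ vanishes by spectral disjointness, and solve a quadratic for the convex weights to hit $\lambda$ exactly. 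This is self-contained (it needs only Proposition~\ref{propWeclosed-convex} and the spectral theorem) and makes the mechanism transparent, at the cost of more case analysis. The paper's route is shorter and more conceptual precisely because it cashes in on Proposition~\ref{proplinesinWeT}, which was proved in greater generality for non-selfadjoint $T$.
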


\begin{proof}
If $T$ is not semibounded, then $W(T)\!=\!\R$ and thus $W_{\!e}(T)\!=\!W(T)\!=\!\R$ by Corollary \ref{corlines}.  
Since ${\rm conv}\big(\widehat{\sigma}_e(T)\big) \!=\! \R\cup\{\pm\infty\}$ by Definition \ref{defextended}, the claim follows.

Now suppose that $T$ is semibounded, say bounded from below; if $T$ is bounded from above, we consider $-T$. Then $s_e\!:=\!\inf \widehat \sigma_e(T)\!=\! \inf \sigma_e(T) \in \R \cup \{+\infty\}$. 
If $s_e< +\infty$, we have $s_e \in W_{\!e}(T)\subset W(T)$, $(s_e,+\infty)\subset W(T)$ and hence $(s_e,+\infty)\subset W_{\!e}(T)$ 
by Proposition \ref{proplinesinWeT}. Therefore the claim is proved if we show that $(-\infty,s_e) \cap W_{\!e}(T)=\emptyset$. Let $\lambda < s_e = \inf \sigma_e(T)$. 
Then $(\lambda,\lambda+\epsilon)\subset \rho(T)$ for some $\epsilon>0$.
If $E_T(\Delta)$ denotes the spectral projection of $T$ corresponding to some Borel set $\Delta\subset \R$, 
we have $\dim \ran (E_T((-\infty,\lambda+\epsilon))) < \infty$,  
$K:=(-T +s_e) E_T((-\infty,\lambda+\epsilon))$ is compact and $T+K \ge \lambda+\epsilon$. Hence
Theorem~\ref{thmequivdefforWe}~ii) yields that $W_{\!e}(T)=W_{e2}(T)\subset \overline{W(T+K)} \subset [\lambda+\epsilon,\infty)$ which implies
$\lm \notin W_{\!e}(T)$.
\end{proof}

The definition of the essential numerical range of a linear operator $T$ involves its quadratic form $t[f]:=\langle Tf,f\rangle$, $\dom(t):=\dom(T)$. This motivates the following definition.

\begin{definition}\label{defWesesq}
Let $t$ be a sesquilinear form with $\dom(t)\subset H$.
Define the \emph{essential numerical range} of~$t$ by 
$$
W_{\!e}(t):= \left\{\lm\in\C:\,\exists\, (x_n)_{n\in\N}\subset\dom(t) \text{ with }\|x_n\|=1,\,x_n\stackrel{w}{\to}0,\,t[x_n]\to \lm\right\}.
$$
\end{definition}

Clearly, if $t$ is the quadratic form of a linear operator $T$, then $W_{\!e}(t)\!=\!W_{\!e}(T)$.
Analogously as in Proposition~\ref{propWeclosed-convex} one may show that $W_{\!e}(t)$ is closed and convex.~For 
an extension $\widetilde t$ of $t$ we have $W_{\!e}(t)\!\subset\! W_e(\widetilde t)$, and equality prevails if $\dom(t)$ is a core of~$\widetilde t$.

\begin{theorem}\label{thm.form}
Let $\,T$ be a linear operator with associated quadratic form $t$, and let $t^*$ be the adjoint form of $t$. 
\begin{enumerate}
\item[\rm i)] 
Then, with the quadratic forms $\re\,t:= \frac 12 (t+t^*)$, $\im\,t:= \frac 1{2\I} (t-t^*)$,
\begin{align*}
\re \,W_{\!e}(T)&\subset W_e(\re\, t),\quad
\im \,W_{\!e}(T)\subset W_e(\im\, t).
\end{align*}
\item[\rm ii)]
If $\,T$ is $m$-sectorial with semi-angle $<\pi/2$ and $\re T$ is the selfadjoint operator induced by the $($symmetric non-negative$)$ form $\re\,t$, then
\beq\label{eq.msect}
\re\,W_{\!e}(T)=W_e(\re T)={\rm conv}(\widehat \sigma_e(\re T))\backslash \{\infty\};
\eeq
in particular, $W_{\!e}(T)=\emptyset$ if $T$ has compact resolvent.
\end{enumerate}
\end{theorem}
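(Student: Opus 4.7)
For part i), I would argue directly from the definitions. If $\lambda\in W_{\!e}(T)$, let $(x_n)_{n\in\N}\subset\dom(T)=\dom(t)$ with $\|x_n\|=1$, $x_n\stackrel{w}{\to}0$ and $t[x_n]=\langle Tx_n,x_n\rangle\to\lambda$. Since $\dom(t^*)=\dom(t)$ and $t^*[x_n]=\overline{t[x_n]}$, one has $(\re t)[x_n]=\re(t[x_n])\to\re\lambda$ and $(\im t)[x_n]=\im(t[x_n])\to\im\lambda$, giving both inclusions simultaneously.

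For part ii), the second equality $W_{\!e}(\re T)={\rm conv}(\widehat\sigma_e(\re T))\setminus\{\pm\infty\}$ is Theorem~\ref{thm.selfadjoint} applied to the selfadjoint operator $\re T$ (note $\re T$ is unbounded because, having $\re T\ge\gamma$, a bounded selfadjoint operator in the infinite-dimensional Hilbert space $H$ with compact resolvent would force $I$ to be compact). So it remains to establish $\re W_{\!e}(T)=W_{\!e}(\re T)$. Let $\mathfrak t$ denote the closed sectorial form representing the $m$-sectorial $T$ in the sense of Kato, so that $\dom(T)$ is a core of $\mathfrak t$. Because the semi-angle is strictly less than $\pi/2$, the $\mathfrak t$-norm is equivalent to the $\re\mathfrak t$-norm on $\dom(\mathfrak t)$, hence $\dom(T)$ is also a core of $\re\mathfrak t$, and $\re\mathfrak t$ is precisely the closed symmetric form associated with the selfadjoint operator $\re T$. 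Applying the core remark after Definition~\ref{defWesesq} yields $W_{\!e}(\re t)=W_{\!e}(\re\mathfrak t)=W_{\!e}(\re T)$.

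The inclusion $\re W_{\!e}(T)\subset W_{\!e}(\re T)$ now follows from part i). For the reverse inclusion, pick $\mu\in W_{\!e}(\re T)=W_{\!e}(\re t)$ together with a witness sequence $(x_n)\subset\dom(T)$, $\|x_n\|=1$, $x_n\stackrel{w}{\to}0$, with $(\re t)[x_n]\to\mu$. The $m$-sectoriality of $T$ gives $|\im t[x_n]|\le\tan\theta\,((\re t)[x_n]-\gamma)$, so $(\im t[x_n])_{n\in\N}$ is bounded. Extracting a subsequence along which $\im t[x_n]\to\nu$ yields $t[x_n]\to\mu+\I\nu\in W_{\!e}(T)$, whence $\mu\in\re W_{\!e}(T)$. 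This proves \eqref{eq.msect}.

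For the final assertion, if $T$ has compact resolvent then the form domain $\dom(\mathfrak t)$ embeds compactly into $H$; since $\dom(\re\mathfrak t)=\dom(\mathfrak t)$ with equivalent norms, also $\re T$ has compact resolvent. Hence $\sigma_e(\re T)=\emptyset$, and as $\re T$ is bounded below by $\gamma$, one has $\widehat\sigma_e(\re T)\subset\{+\infty\}$. Then \eqref{eq.msect} forces $\re W_{\!e}(T)=\emptyset$, so $W_{\!e}(T)=\emptyset$. The main technical obstacle is the identification $W_{\!e}(\re t)=W_{\!e}(\re T)$: it rests on transferring the core property from $\mathfrak t$ to $\re\mathfrak t$, where the hypothesis of semi-angle strictly less than $\pi/2$ plays an indispensable role through the equivalence of the form norms.
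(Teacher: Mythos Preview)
Your proof is correct and follows essentially the same route as the paper's: part~i) is identical, and for part~ii) both arguments use sectoriality to bound $(\im t[x_n])_{n\in\N}$ and extract a convergent subsequence, combined with a core argument linking $W_e(\re T)$, the essential numerical range of the closed form, and $W_e(T)$. The only difference is organizational: you establish $W_e(\re t)=W_e(\re T)$ first via the equivalence of the $\mathfrak t$- and $\re\mathfrak t$-norms and then take witness sequences in $\dom(T)$, whereas the paper takes witnesses in $\dom(\re T)\subset\dom(\mathfrak t)$ and invokes that $\dom(T)$ is a core of $\mathfrak t$ only at the end; for the compact resolvent conclusion the paper simply cites \cite[Theorem~VI.3.3]{Kato} in place of your form-domain embedding argument.
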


\begin{proof}
i)
The claim follows from the fact that if $(t[x_n])_{n\in\N}$ is convergent, then so are $((\re\,t)[x_n])_{n\in\N}$ and $((\im\,t)[x_n])_{n\in\N}$.

ii)
By the assumption on $T$, the associated sequilinear form $t$ is closed and sectorial with semi-angle $<\pi/2$
and there exists a non-negative selfadjoint operator $\re T$ associated with the symmetric non-negative  
quadratic form $h:=\frac 12 (t+t^*)$ with $\dom(h)=\dom(t)=\dom(t^*)$, see  \cite[Sect.\ 3]{MR0138005}.
The inclusion $\re\,W_{\!e}(T)\subset W_e(\re t)=W_e(\re T)$ follows from claim~i). 
Conversely, let $\lm \in W_e(\re T)$. Then there exists $(x_n)_{n\in\N}\subset\dom(\re T)\subset\dom(t)$ with $\|x_n\|=1$, $x_n\stackrel{w}{\to}0$ and
$$\re t[x_n] =\langle \re T x_n,x_n\rangle \longrightarrow\lambda\in W_e(\re T);$$
in particular, $(\re t[x_n])_{n\in\N}$ is bounded. 
Since $t$ is sectorial, this implies that $(\im t[x_n])_{n\in\N}$ is bounded and, thus, has a convergent subsequence.
Hence $(t[x_n])_{n\in\N}$ has a convergent subsequence whose limit has real part $\lm\in\re\,W_{\!e}(T)$.
Since $\dom(T)$ is a core of $t$ by~\cite[Theorem~VI.2.1]{Kato}, we obtain $\lm\in\re\,W_{\!e}(T)=\re\,W_{\!e}(T)$.

The second equality in~\eqref{eq.msect} follows from Theorem~\ref{thm.selfadjoint} since $\re T$ is selfadjoint.
If $T$ has compact resolvent, then so has $\re T$, see~\cite[Theorem~VI.3.3]{Kato}, and hence ${\rm conv}(\widehat \sigma_e(\re T))\backslash \{\infty\} =\emptyset$ because $\re\,T$ is non-negative.
\end{proof}

\begin{rem}
i) As a consequence of Theorem \ref{thm.form}~i), we obtain the inclusion
\begin{align*}
 W_{\!e}(T)=\!\!\!\underset{\phi\in[0,\pi/2)}{\bigcap}\!\!\!\e^{-\I\phi}W_e\left(\e^{\I\phi}T\right)
 \subset\!\!\!\underset{\phi\in[0,\pi/2)}{\bigcap}\!\!\!\e^{-\I\phi}\Big(W_e\left(\re\left(\e^{\I\phi}t\right)\right)+\I\,W_e\left(\im\left(\e^{\I\phi}t\right)\right)\Big).
\end{align*} 
ii) Note that, while for a bounded linear operator $T$, the real and imaginary part can be defined by the formulas $\frac 12 (T+T^*)$ and $\frac 1{2\I} (T-T^*)$, respectively, 
this is not always possible if $T$ is unbounded since $\dom(T)\cap \dom (T^*)$ may not be large enough; 
even if $T$ is m-sectorial with semi-angle $<\pi/2$ the operator $\frac 12 (T+T^*)$ may not be selfadjoint and hence it need not coincide with $\re T$ as defined above 
via forms, see \cite[Sect.~3]{MR0138005}. 
\end{rem}

The last claim in Theorem \ref{thm.form} is sharp, i.e.\ there are $m$-accretive, 
non-sectorial operators with compact resolvent and $W_{\!e}(T)\ne \emptyset$, as the following example illustrates. 

\begin{example}[Schr\"odinger operators with complex potentials]

\begin{itemize}
\item[\phantom{a)}] 
\end{itemize}

\vspace{1mm}

a) Consider a potential $Q\in L_{\rm loc}^1(\R^d)$ with 
\beq\label{eq.div}
|Q(x)|\tolong \infty, \quad |x|\to\infty.
\eeq
If $Q$ is sectorial, then, by~\cite[Proposition~2.2]{BST}, the quadratic form 
$$t[f]:=\|\nabla f\|^2+\langle Q f,f\rangle, \quad \dom(t):=\{f\in H^1(\R^d):\,Q|f|^2\in L^2(\R^d)\},$$
is closed, densely defined and sectorial in $L^2(\R^d)$, and the $m$-sectorial operator $T$ uniquely determined by $t$ has compact resolvent.
Thus $W_{\!e}(T)=\emptyset$ by Theorem~\ref{thm.form}~ii).

b) If the potential is not sectorial but only accretive, then $W_{\!e}(T)\ne \emptyset$ is possible, even under the assumption~\eqref{eq.div}.
As an example in dimension $d=1$, consider the complex Airy \vspace{-2mm} operator
$$T:=-\frac{\rd^2}{\rd x^2}+Q, \quad Q(x):=\I x, \quad x\in \R,$$ 
in $L^2(\R)$. Here we will show that
$$ W_{\!e}(T) = \overline{W(T)}=\{\lm\in\C:\,\re\lm\geq 0\}. $$
The inclusions ``$\subset$'' are obvious. Since $W_{\!e}(T)$ is closed, it remains to be proved that
$\{\lm\in\C:\,\re\lm > 0\}\subset W_{\!e}(T)$. Let $\lm=u+\I v\in \C$ with $u=\re \lm> 0$ be arbitrary. 
If $\varphi\in C_0^{\infty}(\R)$ is an even or odd function with ${\rm supp}\, \varphi\subset [-1,1]$ and $\|\varphi\|^2=1/2$,  $\|\varphi'\|^2=u/2$, we 
define $(f_n)_{n\in\N}\subset\dom(T)$ by 
$$
f_n(x):=\begin{cases} \varphi(x-(-n))+\varphi(x-(n+2|v|)), &v\geq 0,\\ \varphi(x-(-n-2|v|))+\varphi(x-n), &v<0,\end{cases} \quad x\in\R,\quad n\in\N.
$$
Then it is not difficult to check that $\langle Q f_n,f_n\rangle =\I v$, $\|f_n\|=1$, $f_n\stackrel{w}{\to} 0$ as $n\to\infty$~and
$$
\langle Tf_n,f_n\rangle  =\|f_n'\|^2+\langle Q f_n,f_n\rangle=2\|\varphi'\|^2+\I v=u+\I v=\lm, \quad n\in\N,
$$
which implies that $\lm\in W_{\!e}(T)$, as required.

Note that the above arguments also prove directly that the closure of the numerical range of the complex Airy operator is the closed right half-plane, 
while earlier proofs rely on estimates of the resolvent norm, comp.\ \cite[Sect.~3.1]{MR2807108}.
\end{example}

\section{Perturbation results}\label{sectionperturb}

While the essential spectrum of an unbounded linear operator is invariant both under compact and relatively compact perturbations, 
we will see that, in general, the latter is not true for the essential numerical range. 

First we prove that the essential numerical range $W_{\!e}(T)$ and all other, possibly not coincident sets $W_{ei}(T)$, $i=1,2,3,4$, 
are invariant under compact perturbations.

\begin{prop}\label{propWeprop}
For every compact $K\in L(H)$ we have
\begin{align*}
&W_{ei}(T+K)=W_e(T+K)=W_{\!e}(T)=W_{ei}(T), \quad i=2,3, \\
&W_{e4}(T+K)\!=\!W_{e4}(T), \quad W_{e1}(T+K)\!=\!W_{e1}(T), 
\end{align*}
even if the latter two sets are not equal to $W_{\!e}(T)$.
\end{prop}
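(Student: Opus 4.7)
The plan is to treat the four kinds of essential numerical range separately, using Theorem \ref{thmequivdefforWe} to collapse the $i=2,3$ cases to $W_{\!e}$, and a finite-rank approximation argument for $W_{e1}$.

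For $i=2,3$, Theorem \ref{thmequivdefforWe} gives $W_{ei}(S)=W_{\!e}(S)$ unconditionally for any $S$, so it suffices to prove $W_{\!e}(T+K)=W_{\!e}(T)$. Since $K\in L(H)$, we have $\dom(T+K)=\dom(T)$, so the admissible test sequences in Definition \ref{defWe} are the same for $T$ and $T+K$. If $(x_n)\subset\dom(T)$ with $\|x_n\|=1$ and $x_n\stackrel{w}{\to}0$, then compactness of $K$ yields $Kx_n\to 0$ strongly, hence $|\langle Kx_n,x_n\rangle|\leq\|Kx_n\|\to 0$. Thus $\langle Tx_n,x_n\rangle\to\lambda$ if and only if $\langle (T+K)x_n,x_n\rangle\to\lambda$, giving both inclusions. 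The same reasoning handles $W_{e4}$ verbatim, since any orthonormal sequence $(e_n)$ satisfies $e_n\stackrel{w}{\to}0$ by Bessel's inequality and therefore $\langle Ke_n,e_n\rangle\to 0$.

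For $W_{e1}$, the witnessing sequences in $\overline{W(T|_{V^\perp\cap\dom(T)})}$ need not converge weakly to $0$, so the above argument fails and a different idea is required. The plan is to approximate $K$ in norm by a finite rank operator and absorb the finite rank part into the subspace $V$. Fix $\lambda\in W_{e1}(T)$, $V\in\mathcal V$, and $\epsilon>0$. Choose a finite rank $K_\epsilon\in L(H)$ with $\|K-K_\epsilon\|<\epsilon$ and set $V_\epsilon:=V+\ran(K_\epsilon)+\ran(K_\epsilon^*)\in\mathcal V$. For every $x\in V_\epsilon^\perp$, orthogonality to $\ran(K_\epsilon^*)$ forces $K_\epsilon x=0$, so that $|\langle Kx,x\rangle|=|\langle (K-K_\epsilon)x,x\rangle|\leq\epsilon$ whenever $\|x\|=1$. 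Since $\lambda\in\overline{W(T|_{V_\epsilon^\perp\cap\dom(T)})}$ by the definition of $W_{e1}$, there exists such a unit vector $x\in V_\epsilon^\perp\cap\dom(T)\subset V^\perp\cap\dom(T)$ with $|\langle Tx,x\rangle-\lambda|<\epsilon$, and therefore $|\langle (T+K)x,x\rangle-\lambda|<2\epsilon$. As $\epsilon>0$ and $V\in\mathcal V$ were arbitrary, this gives $\lambda\in W_{e1}(T+K)$. Applying the same argument to $T+K$ with the compact perturbation $-K$ yields the reverse inclusion.

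The main obstacle is precisely the $W_{e1}$ case: since one has no weak-to-strong convergence to exploit, the coupling between $K$ and the intersection over $V\in\mathcal V$ must be engineered by enlarging $V$ so that a finite rank surrogate of $K$ vanishes on $V^\perp$. Everything else is a direct consequence of compactness plus Theorem \ref{thmequivdefforWe}.
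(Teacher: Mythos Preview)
Your proof is correct and, for $W_{\!e}$, $W_{e2}$, $W_{e3}$, and $W_{e4}$, essentially identical to the paper's argument. The only genuine difference is in the $W_{e1}$ case. The paper invokes Lacey's theorem to produce, for each $n$, a finite-codimensional subspace $W_n$ with $\|K|_{W_n}\|\le\frac{1}{2n}$, and then uses Lemma~\ref{lemma.V0} to restrict the intersection defining $W_{e1}(T+K)$ to vectors in $W_n$. Your argument instead approximates $K$ in norm by a finite rank $K_\eps$ and enlarges $V$ by $\ran(K_\eps^*)$ so that $K_\eps$ vanishes on $V_\eps^\perp$; this is more self-contained, since it avoids both the external reference to Lacey's theorem and the auxiliary Lemma~\ref{lemma.V0}, and in effect reproves the special case of Lacey's result that is actually needed. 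The paper's route, on the other hand, packages the ``restrict to a subspace'' step into a reusable lemma. Both approaches hinge on the same idea: pass to a finite-codimensional subspace on which $K$ is uniformly small.
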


\begin{proof}
For $W_{\!e}(T)$ the claim follows readily from Definition \ref{defWe} since compact operators map weakly convergent sequences to strongly convergent ones. Alternatively, as for $W_{e2}(T)$ it follows from the equality $W_{\!e}(T)=W_{e2}(T)=W_{e3}(T)$ by Theorem~\ref{thmequivdefforWe} since the claim for $W_{e3}(T)$ is obvious from its definition. For $W_{e4}(T)$ the claim follows from the property that $\langle Ke_n,e_n\rangle \to 0$
as $n\to\infty$ for a compact operator $K$ and an arbitrary orthonormal system $(e_n)_{n\in\N} \subset H$.

In order to show that $W_{e1}(T)=W_{e1}(T+K)$ for every compact $K\in L(H)$, it suffices to prove that $W_{e1}(T+K) \subset W_{e1}(T)$; then the reverse inclusion follows from $W_{e1}(T)=W_{e1}((T+K)-K) \subset W_{e1}(T+K)$. Let $\lambda \in W_{e1}(T+K)$, and suppose that $V\in \cV$ and $n\in\N$ are arbitrary. 
By Lacey's theorem, see \cite{Lacey-1963-PhD}, \cite[Thm.\ III.2.3]{MR0200692},
there exists a finite codimensional, hence closed, subspace $W_n \subset H$ such that $\|K|_{W_n}\| \le \frac 1{2n}$. 
By Lemma \ref{lemma.V0}, since $V_n:=W_n^\perp$ is finite dimensional, we have $W_{e1}(T+K)=W_{e1}((T+K)|_{W_n \cap \dom (T)})$
and hence, by definition of the latter, $\lambda \in \overline{W((T+K)|_{V^\perp\cap W_n \cap \dom(T)})}$.
This implies that there exists $x_n \in V^\perp\cap W_n \cap \dom(T)$, $\|x_n\|=1$, with $|\lambda - \langle(T+K)x_n,x_n \rangle| \le \frac 1{2n}$.
Since $x_n\in W_n$ and $x_n \in V^\perp\cap \dom(T)$, it follows that
$$
|\lambda - \langle Tx_n,x_n \rangle| \le |\lambda - \langle (T+K)x_n,x_n \rangle | + \underbrace{|\langle Kx_n,x_n \rangle|}_{\le \|K|_{W_n}\|\le \frac 1{2n}} 
\vspace{-4mm}
\le \frac 1n, \quad n\in\N,
$$
which proves that $\lambda \in W_{e1}(T)$ as required.
\end{proof}

The following remark shows that, unlike the essential spectrum, 
the essential numerical range is not invariant under relatively compact perturbations in general.

\begin{rem}
\label{rem:Wetilde}
For an unbounded operator $T$, Example~\ref{xmas} below shows that the definition of the essential numerical range is in general \emph{not} equivalent to 
$$\widetilde W_{\!e}(T):=\underset{K\, T\text{-compact}}{\bigcap}\overline{W(T+K)},$$
not even if $T$ is selfadjoint and thus $W_{\!e}(T)\!=\!W_{ei}(T)$, $i\!=\!1,2,3,4$, see Theorem~\ref{thmequivdefforWe}. 
In general, $\widetilde W_{\!e}(T) \subset W_{e3}(T)=W_{\!e}(T)$ by Theorem~\ref{thmequivdefforWe} since every com\-pact operator is $T$-compact.
Equality only holds under some additional condition: 
\begin{align}
\label{Wetilde-equiv}
W_{\!e}(T)=\widetilde W_{\!e}(T) \quad \iff \quad
W_{\!e}(T) \subset W_{e}(T+\widetilde{K}) \ \mbox{for all $T$-compact } \widetilde{K};
\end{align}
if $T$ is closable, \eqref{Wetilde-equiv} holds with ``$\subset$'' replaced by ``$=$'' on the right hand side. 
In particular, $W_{\!e}(T) \!\ne\! \widetilde W_{\!e}(T)$ if $T$ has compact resolvent, $W(T) \!\ne\! \C$ and $W_{\!e}(T)\!\ne\! \emptyset$.
\end{rem}

\begin{proof}
``$\Longrightarrow$'' 
Suppose first that $W_{\!e}(T)=\widetilde W_{\!e}(T)$ and let $\widetilde K$ be an arbitrary $T$-compact operator. Then 
\begin{align*}
W_{\!e}(T) &= \underset{K\, T\text{-compact}}{\bigcap} \overline{W(T+K)}\; =\underset{K\, T\text{-compact}}{\bigcap}\overline{W(T+\widetilde{K}+K)} \\
  &\subset \hspace{2mm}\underset{K \in L(H) \atop K\,\text{compact}}{\bigcap}\hspace{2mm}\overline{W(T+\widetilde{K}+K)}  = W_{e3}(T+\widetilde{K}) 
	=W_e(T+\widetilde{K}),
\\[-7mm]
\end{align*}
where Theorem~\ref{thmequivdefforWe} was used for $T\!+\!\widetilde K$ in the last step. 
If $T$ is closable, then every $T$-compact $\widetilde K$ is also $(T+\widetilde K)$-compact, see \cite[Prop.~III.8.3]{edmundsevans}.
Using what was already proved, we also obtain the reverse inclusion $W_e(T+\widetilde{K}) \subset W_e(T+\widetilde{K}-\widetilde K)=W_{\!e}(T)$.   

``$\Longleftarrow$''  
Conversely, if $W_{\!e}(T) \!\subset\! W_{e}(T+\widetilde{K})$ for all $T$-compact $\widetilde{K}$, then 
\begin{equation}
\begin{array}{rl}
W_{\!e}(T) \; & \subset  \hspace{-2mm} \underset{\widetilde K\, T\text{-compact}}{\bigcap} \hspace{-2mm}  W_{e}(T+\widetilde{K}) \\
&\subset \hspace{-2mm}  \underset{\widetilde K\, T\text{-compact}}{\bigcap} \hspace{-2mm}  \overline{W(T+\widetilde{K})} \
\subset \hspace{-2mm}  \underset{K \in L(H) \atop K \,\text{compact}}{\bigcap} \hspace{-2mm} \overline{W(T+K)} = W_{e3}(T) = W_{\!e}(T),
\end{array} 
\vspace{-1mm}
\end{equation}
where Theorem~\ref{thmequivdefforWe} was used for $T$ in the last step. This shows that equality prevails everywhere and hence, in particular, $W_{\!e}(T)=\widetilde W_{\!e}(T)$.

To prove the last claim, we use that $K=\lambda I$ is $T$-compact and $T$ is closed if $T$ has compact resolvent, and hence the right hand side of 
\eqref{Wetilde-equiv} 
only holds if
\begin{equation}
\label{Wetransl}
W_{\!e}(T) = W_e(T+\lm I)=W_{\!e}(T)+\lm, \quad \lm\in\C,
\end{equation}
which necessitates $W_{\!e}(T)=\emptyset$ or $W_{\!e}(T)=\C$; the latter is equivalent to $W(T)=\C$ by Corollary~\ref{corlines} iv).
\end{proof}

\begin{example}
\label{xmas}
Let $T$  be a selfadjoint non-semibounded operator with compact 
resolvent. 
Then $W(T)=\R$ and thus, by Corollary~\ref{corlines}~i), also $W_{\!e}(T)=W(T)=\R$. 
So in this case $W(T)\ne \C$ and $W_{\!e}(T)\ne \emptyset$, whence $W_{\!e}(T) \ne \widetilde W_{\!e}(T)$ by the last claim of Remark \ref{rem:Wetilde}.
\end{example}

The next proposition shows that the essential numerical range of a selfadjoint operator $T$  remains invariant under 
symmetric relatively compact  perturbations. Further stability results  for $W_{\!e}(T)$ for non-selfadjoint operators are given~below. 

\begin{prop}\label{propsapert}
Let $T$ be selfadjoint. If $S$ is symmetric and $T$-compact, then $W_{\!e}(T)=W_e(T+S)$.
\end{prop}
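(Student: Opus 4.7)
The plan is to reduce the statement to the description of $W_{\!e}$ for selfadjoint operators via the extended essential spectrum in Theorem~\ref{thm.selfadjoint}, by showing that $T+S$ inherits from $T$ the properties of being selfadjoint, having the same essential spectrum, and having the same (non-)semi-boundedness.

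First, I would recall that if $S$ is $T$-compact and $T$ is closed, then $S$ is $T$-bounded with $T$-bound $0$; this is standard and follows from norm-approximation of the compact operator $S(T-\I)^{-1}$ by finite-rank operators, which produces the estimate $\|Sx\|\le \epsilon\|Tx\|+C_\epsilon\|x\|$ for every $\epsilon>0$. The Kato--Rellich theorem then yields that $T+S$ is selfadjoint on $\dom(T+S)=\dom(T)$, and the standard Weyl-type stability of the essential spectrum under relatively compact perturbations of selfadjoint operators gives $\sigma_e(T+S)=\sigma_e(T)$.

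Next I would check that $T$ is bounded from below if and only if $T+S$ is, and analogously from above. Assuming $T\ge c$, for $\lambda\ll 0$ the operator $T-\lambda$ is positive with bounded inverse, and the bound $\|S(T-\lambda)^{-1}\|\le \epsilon\|T(T-\lambda)^{-1}\|+C_\epsilon\|(T-\lambda)^{-1}\|$ can be made strictly less than $1$ for $|\lambda|$ large enough. A Neumann series argument then places such $\lambda$ in $\rho(T+S)$, so $T+S$ is bounded below. Boundedness from above is obtained by replacing $T,S$ with $-T,-S$, and the reverse implications follow by swapping the roles of $T$ and $T+S$, using that $-S$ remains symmetric and is $(T+S)$-compact since relative bound $0$ perturbations preserve relative compactness. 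Consequently $\widehat\sigma_e(T)=\widehat\sigma_e(T+S)$ as subsets of $\R\cup\{\pm\infty\}$.

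Applying Theorem~\ref{thm.selfadjoint} to the selfadjoint operators $T$ and $T+S$ (in the bounded selfadjoint case the identity $W_{\!e}={\rm conv}(\sigma_e)$ is classical and also follows from Proposition~\ref{propWeprop}, since bounded $T$ forces $S$ itself to be compact), I then conclude
\[
W_{\!e}(T)={\rm conv}(\widehat\sigma_e(T))\setminus\{\pm\infty\}={\rm conv}(\widehat\sigma_e(T+S))\setminus\{\pm\infty\}=W_{\!e}(T+S).
\]
The principal technical point is the semi-boundedness preservation in the second step; everything else is bookkeeping once the selfadjointness of $T+S$ and the essential spectrum stability are in hand.
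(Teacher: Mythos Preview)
Your proof is correct and follows essentially the same route as the paper's. The paper's proof is terser, citing standard references for the three ingredients you spell out (relative bound~$0$, selfadjointness of $T+S$, preservation of semi-boundedness and of the essential spectrum) and then invoking Theorem~\ref{thm.selfadjoint}; you supply the arguments in more detail and are slightly more careful about the bounded case, but the overall structure is identical.
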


\begin{proof}
The assumptions on $S$ imply that  
$\sigma_e(T)=\sigma_e(T+S)$, that $S$ is $T$-bounded with relative bound $0$ and hence 
$T+S$ is selfadjoint, and that $T+S$ is bounded from below/above whenever $T$ is, see \cite[Thm.~IX.2.1, Cor.~II.7.7]{edmundsevans},
\cite[Thm.~V.4.3, Thm.~V.4.11]{Kato}.
Now the claim follows from Theorem~\ref{thm.selfadjoint}.
\end{proof}

In the following result both the unperturbed operator $T$ and the perturbation~$S$ may be non-selfadjoint, but we 
assume that they admit decompositions into ``real and imaginary parts''\!, which need not hold for unbounded operators~in general.

\begin{theorem}\label{thmABUV}
Let $T=A+\I B$ and $S=U+\I V$ with symmetric operators $A$,~$B$ and $U$, $V$ in $H$ such that
\emph{one} of the following holds:
\begin{enumerate}
 \item[\rm (i)] \hspace{0.5mm}$A$ is selfadjoint and semibounded, $U$, $V$ are $A$-compact, or
 \item[\rm (ii)] $B$ is selfadjoint and semibounded, $U$, $V$ are $B$-compact, or
 \item[\rm (iii)] $A$, $B$ are selfadjoint and semibounded, $U$ is $A$-compact and $V$ is $B$-compact.
\end{enumerate}
\noindent Then $W_{\!e}(T)=W_e(T+S)$.
\end{theorem}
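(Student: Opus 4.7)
The plan is to reduce all three cases to a common form-level statement about the asymptotic vanishing of symmetric relatively compact perturbations along weakly null unit sequences. Writing the quadratic forms $t[x]=\langle Tx,x\rangle=a[x]+\I b[x]$ and $s[x]=\langle Sx,x\rangle=u[x]+\I v[x]$, with $a,b,u,v$ real-valued by symmetry of $A,B,U,V$, Definition~\ref{defWe} shows that $\lambda\in W_{\!e}(T+S)$ iff there exists $(x_n)\subset\dom(T+S)=\dom(T)$ with $\|x_n\|=1$, $x_n\stackrel{w}{\to}0$, $(a+u)[x_n]\to\re\lambda$ and $(b+v)[x_n]\to\im\lambda$, and analogously for $W_{\!e}(T)$ without $u,v$.

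The workhorse I will establish separately is the following key lemma: \emph{if $A$ is selfadjoint and semibounded and $U$ is symmetric and $A$-compact, then for every $(x_n)\subset\dom(A)$ with $\|x_n\|=1$ and $x_n\stackrel{w}{\to}0$,}
\[
\frac{|\langle Ux_n,x_n\rangle|}{1+|\langle Ax_n,x_n\rangle|}\tolong 0, \qquad n\to\infty.
\]
After WLOG $A\ge I$ (by a shift and, if $A$ is bounded above, replacing $T,S$ by $-T,-S$ and using rotation invariance), the proof uses the spectral splitting $x_n=E_A([1,M])x_n+E_A((M,\infty))x_n=:x_n^{(1)}+x_n^{(2)}$: the low-energy piece lies in $\dom(A)$ with $\|Ax_n^{(1)}\|\le M$ and $Ax_n^{(1)}\stackrel{w}{\to}0$, so $Ux_n^{(1)}\to 0$ in norm by compactness of $U(A+\I)^{-1}$; the high-energy tail satisfies $\|x_n^{(2)}\|^2\le \langle Ax_n,x_n\rangle/M$. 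The cross and high-energy contributions to $\langle Ux_n,x_n\rangle$ are then controlled via the form-compactness of $U$ relative to $A$, namely compactness on $H$ of the operator canonically associated with $A^{-1/2}UA^{-1/2}$, which for symmetric $U$ is the form-level upgrade of the given operator $A$-compactness.

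Granted the lemma, case~(i) goes as follows. Suppose $\lambda\in W_{\!e}(T+S)$ with witness $(x_n)$. If $a[x_n]$ were unbounded, semiboundedness of $A$ would yield (along a subsequence) $a[x_n]\to\pm\infty$, and the lemma would give $u[x_n]/a[x_n]\to 0$, so $(a+u)[x_n]/a[x_n]\to 1$ and $(a+u)[x_n]$ diverges, contradicting convergence. Hence $a[x_n]$ is bounded; the lemma applied to $(A,U)$ and to $(A,V)$ yields $u[x_n]\to 0$ and $v[x_n]\to 0$, so $a[x_n]\to\re\lambda$, $b[x_n]\to\im\lambda$, and $\lambda\in W_{\!e}(T)$. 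Conversely, any witness for $\lambda\in W_{\!e}(T)$ has $a[x_n]\to\re\lambda$ bounded, so the lemma again gives $u[x_n],v[x_n]\to 0$ and $\lambda\in W_{\!e}(T+S)$. Case~(ii) reduces to case~(i) applied to $-\I T=B+\I(-A)$ and $-\I S=V+\I(-U)$ (which satisfy the hypotheses of (i) with $B$ in the role of $A$) together with $W_{\!e}(zT)=zW_{\!e}(T)$. Case~(iii) follows by the same argument as (i), except that the boundedness of $a[x_n]$ and of $b[x_n]$ is obtained \emph{independently} from the convergence of $(a+u)[x_n]$ and $(b+v)[x_n]$, by applying the lemma once to $(A,U)$ and once to $(B,V)$.

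The main obstacle is the form-compactness step in the key lemma: from the hypothesis that $U(A+\I)^{-1}$ is compact on $H$, one needs that the sesquilinear form $A^{-1/2}UA^{-1/2}$, a~priori only defined on $\dom(A^{1/2})\subset H$, extends to a compact operator on~$H$, when $U$ is only assumed symmetric (not closed or selfadjoint) and defined on $\dom(A)$. This passage is standard but non-trivial: it relies on the fact that operator $A$-boundedness of a symmetric operator with relative bound~$0$ upgrades to form $A$-boundedness with form bound~$0$, and on a polar-type decomposition of the closure of $U$ (whose symmetry and $A$-bound with relative bound~$0$ enable a Friedrichs-type selfadjoint extension), which allows one to factor $A^{-1/2}UA^{-1/2}$ as a product with a known-compact factor inherited from $UA^{-1}$.
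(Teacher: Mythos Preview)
Your proposal is correct and follows a genuinely different route from the paper's proof. The paper does not isolate a quantitative lemma of the type $|\langle Ux_n,x_n\rangle|/(1+|\langle Ax_n,x_n\rangle|)\to 0$. Instead, after reducing (ii) to (i) as you do, it uses only form-\emph{boundedness} (Kato's Theorem~VI.1.38) to conclude that all four sequences $\langle Ax_n,x_n\rangle$, $\langle Ux_n,x_n\rangle$, $\langle Bx_n,x_n\rangle$, $\langle Vx_n,x_n\rangle$ are bounded, passes to a subsequence along which each converges, say $\langle Ux_n,x_n\rangle\to\gamma$ and $\langle Vx_n,x_n\rangle\to\delta$, and then argues by contradiction via a one-parameter family: if $\gamma\neq 0$, then for every $t\in\R$ one has $\alpha+t\gamma\in W_{\!e}(A+tU)=W_{\!e}(A)$, the equality holding by Proposition~\ref{propsapert} (which in turn rests on the selfadjoint characterisation Theorem~\ref{thm.selfadjoint}), forcing $W_{\!e}(A)=\R$ and contradicting semiboundedness; $\delta=0$ is obtained analogously. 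Your approach trades this $t$-parameter trick and the appeal to Proposition~\ref{propsapert}/Theorem~\ref{thm.selfadjoint} for the form-\emph{compactness} upgrade, i.e.\ compactness of the operator associated with $A^{-1/2}UA^{-1/2}$; as you correctly note, this upgrade is available for symmetric $U$ because $\dom(A)\subset\dom(U)\subset\dom(U^*)$ (cf.\ the discussion preceding Theorem~\ref{thmSqrt}), and then your spectral-cutoff argument yields $u[x_n]\to 0$, $v[x_n]\to 0$ directly. Your route is more self-contained and the key lemma is independently useful, at the cost of needing the form-compactness step; the paper's route avoids form-compactness altogether but leans on the previously established selfadjoint result.
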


\begin{rem}
\label{rem:sectorial-decompose}
Theorem \ref{thmABUV} does not hold if $S$ does not decompose and we only assume that 
$S$ is $A$-compact in (i) or $S$ is $B$-compact in (ii), see Example \ref{Ex2} below.
\end{rem}

\begin{proof}[Proof of Theorem {\rm \ref{thmABUV}}.]
It is sufficient to consider the cases (i) and (iii); in case (ii) the operators $\I T$ and $\I S$ satisfy the assumptions of (i). 
 
``$\supset$":  Let $\lm\in W_e(T+S)$.
Then there exists a sequence $(x_n)_{n\in\N}\subset\dom(T)$ with $\|x_n\|=1$, $x_n\stackrel{w}{\to} 0$ and $\langle (T+S) x_n,x_n\rangle \to \lm$,
i.e.
\beq \label{eqreimconv}
 \langle A x_n,x_n\rangle + \langle U x_n,x_n\rangle \longrightarrow \re\lm, \ \ 
 \langle B x_n,x_n\rangle + \langle V x_n,x_n\rangle \longrightarrow \im\lm, \quad n\to\infty.
\eeq

First we show that all four sequences occurring in \eqref{eqreimconv} are bounded.
To this end, suppose that $(\langle A x_n,x_n\rangle)_{n\in\N}$ is unbounded,
i.e.\ there exists an infinite subset $I\subset\N$ such that $\langle A x_n, x_n\rangle\!\to\!\infty$ as $n\!\in\! I$, $n\!\to\!\infty$.
In both cases (i) and (iii), $A$ is selfadjoint and semibounded and 
$U$ is $A$-bounded with relative bound~$0$, and~thus \cite[Theorem~VI.1.38]{Kato} 
(or, more generally, \cite[Thm.~3.2 (i)]{MR2501173}) 
implies $\langle (A+U) x_n, x_n\rangle \to\infty$ as $n\in I$, $n\to\infty$, a contradiction to~\eqref{eqreimconv}.
Thus $(\langle A x_n,x_n\rangle)_{n\in\N}$ is bounded, and hence so is $(\langle U x_n,x_n\rangle)_{n\in\N}$ 
by \cite[Theorem~VI.1.38]{Kato}.
In case (iii), the proof that $(\langle B x_n,x_n\rangle)_{n\in\N}$ and $(\langle V x_n,x_n\rangle)_{n\in\N}$ are bounded is analogous.
In case (i), $V$ is $A$-bounded with relative bound~$0$ and hence the boundedness of $(\langle A x_n,x_n\rangle)_{n\in\N}$ implies the 
boundedness of $(\langle V x_n,x_n\rangle)_{n\in\N}$ by \cite[Theorem~VI.1.38]{Kato}. Now \eqref{eqreimconv} yields that $(\langle B x_n,x_n\rangle)_{n\in\N}$ is bounded.

The boundedness of the four sequences in \eqref{eqreimconv} implies that 
there exist an infinite subset $J\subset\N$ and $\gamma$, $\delta\in\R$ such that
$$
\begin{array}{ll}
\langle U x_n,x_n\rangle \longrightarrow \gamma, \quad &\langle A x_n,x_n\rangle \longrightarrow \re \lm-\gamma=:\alpha, \\[1mm]
\langle V x_n,x_n\rangle \longrightarrow \delta \quad &\langle B x_n,x_n\rangle \longrightarrow \im \lm-\delta=:\beta,
\end{array} \qquad n\in J, \quad n\to\infty.$$
Now suppose that $\lm\!\notin\! W_{\!e}(T)$. Since $\alpha\!+\!\I \beta \in W_{\!e}(T)$, this would imply $\gamma \!\neq\! 0$ or $\delta \!\neq\! 0$.

First we assume that $\gamma\neq 0$. In both cases (i) and (iii), we have 
$W_e(A+t U)=W_e(A)$ by Proposition~\ref{propsapert}. Then, for any $ t\in\R$, 
$$ 
\alpha + t \gamma =\lim_{n\in J\atop n\to\infty} \langle (A+t U)x_n,x_n\rangle \in W_e(A+t U) = W_e(A),
\vspace{-2mm}
$$
which implies that $W_e(A)=\R$, a contradiction to the semiboundedness of $A$ assumed in (i) and (iii).

Now assume that $\delta \neq 0$. 
In case (i), in the same way as above, we arrive at $\gamma+t \delta\in W_e(A+t V)=W_e(A)$ 
for every $t\in\R$ implying the contradiction $W_e(A)=\R$.
In case (iii), in the same way as above, we conclude that 
$\beta+t\delta \in W_e(B+t V)=W_e(B)$ for any $t\in\R$ and hence $W_e(B)=\R$, a contradiction 
the semiboundedness of $B$ assumed in (iii). 

Altogether, we have shown that $\gamma=\delta=0$ and hence $\lm\in W_{\!e}(T)$.

``$\subset$": The  reverse inclusion follows by applying the first part of the proof to the operators $T'\!=T+S$ and $S'\!=-S$.
Here, in case (iii) we have to note that $A+U$ and 
$B+V$ are selfadjoint by~\cite[Theorem~V.4.3]{Kato} and that $U$ is $(A+U)$-compact,
$V$ is $(B+V)$-compact by \cite[Prop.~III.3.8]{edmundsevans}. In case (i), we also have to show that $V$ is $(A+U)$-compact. To this end, suppose that 
$(x_n)_{n\in\N}$, $((A+U)x_n)_{n\in\N}$ are bounded. Because $U$ is $(A+U)$-compact and thus $(A+U)$-bounded, this implies that $(Ux_n)_{n\in\N}$ 
is bounded and hence so is $(Ax_n)_{n\in\N}$. Since $V$ is $A$-compact by assumption in (i), it follows that $(Vx_n)_{n\in\N}$ contains a convergent subsequence.
\end{proof}

In the following theorem, instead of requiring
the perturbation to decompose into real and imaginary parts,
we strengthen the relative compactness assumptions on it. 
If e.g.\ $A$ is uniformly positive, then instead of assuming that $S$ is $A$-compact, i.e.\ $SA^{-1}$ is compact,
we have to assume that  $A^{-1/2}SA^{-1/2}$ is compact.

That $S$ is $A$-compact implies that $S$ is $A$-form-compact, i.e.\ $|S|^{1/2}A^{-1/2}$ is compact, is not only true for symmetric $S$, 
see \cite[Theorem~VI.1.38]{Kato}, but even for densely defined closed $S$, see \cite[Thm.~3.5]{MR2501173}. Therein it was also shown that, 
if $|S|^{1/2}A^{-1/2}$ is compact and $\dom(A) \subset \dom(S) \cap \dom(S^*)$, then even $A^{-1/2}SA^{-1/2}$ is compact. 
Note that this need not be true for non-symmetric $S$ since $S^*$ may not be $A^{1/2}$-bounded; incidentally,
Example \ref{Ex2} below shows that the condition $\dom(A) \subset \dom(S) \cap \dom(S^*)$ is also necessary.

\begin{theorem}\label{thmSqrt}
Let $T=A+\I B$ with uniformly positive $A$ and symmetric $B$ and let $A^{-1/2}S$ be $A^{1/2}$-compact, i.e.\ $A^{-1/2}SA^{-1/2}$ is compact. Then $W_{\!e}(T)\!=\!W_{\!e}(T+S)$.
In particular, if $S$ is $A$-compact and $\dom(A)\!\subset\!\dom(S) \cap \dom(S^*)$, then $W_{\!e}(T)\!=\!W_{\!e}(T+S)$.
\end{theorem}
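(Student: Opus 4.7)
The plan is to reduce everything to one key lemma: if $(x_n)_{n\in\N}\subset\dom(A)$ satisfies $\|x_n\|=1$, $x_n\stackrel{w}{\to}0$, and $(\langle Ax_n,x_n\rangle)_{n\in\N}$ is bounded, then $\langle Sx_n,x_n\rangle\to 0$. To prove this lemma, I would set $K:=A^{-1/2}SA^{-1/2}\in L(H)$, which is compact by hypothesis, and $y_n:=A^{1/2}x_n$. The bound on $\|y_n\|^2=\langle Ax_n,x_n\rangle$ makes $(y_n)_{n\in\N}$ bounded in $H$, and for $z\in\dom(A^{1/2})$ we have $\langle y_n,z\rangle=\langle x_n,A^{1/2}z\rangle\to 0$; density of $\dom(A^{1/2})$ together with boundedness of $(y_n)$ then forces $y_n\stackrel{w}{\to}0$ in $H$ by a standard $\varepsilon/3$ argument. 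Since $A^{-1/2}\in L(H)$ is self-adjoint and $A^{-1/2}y_n=x_n$, a direct computation yields $\langle Sx_n,x_n\rangle=\langle Ky_n,y_n\rangle$, which tends to $0$ by compactness of $K$ acting on a bounded weakly null sequence.

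With the lemma at hand, the inclusion $W_{\!e}(T)\subset W_{\!e}(T+S)$ is immediate: given $\lambda\in W_{\!e}(T)$ realised by $(x_n)_{n\in\N}$, the identity $\re\langle Tx_n,x_n\rangle=\langle Ax_n,x_n\rangle$ shows boundedness of $(\langle Ax_n,x_n\rangle)_{n\in\N}$, the lemma gives $\langle Sx_n,x_n\rangle\to 0$, and hence $\langle (T+S)x_n,x_n\rangle\to\lambda$. For the reverse inclusion $W_{\!e}(T+S)\subset W_{\!e}(T)$, the task is to establish boundedness of $(\langle Ax_n,x_n\rangle)_{n\in\N}$ for any realising sequence of $\lambda\in W_{\!e}(T+S)$; once this is in hand, the lemma delivers $\langle Sx_n,x_n\rangle\to 0$ and hence $\lambda\in W_{\!e}(T)$. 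The boundedness will come from a normalization argument: assuming for contradiction that $\|y_{n_k}\|^2=\langle Ax_{n_k},x_{n_k}\rangle\to\infty$ along a subsequence, set $\tilde y_k:=y_{n_k}/\|y_{n_k}\|$; then $\tilde y_k\stackrel{w}{\to}0$ by the same density argument as above, and dividing the identity
$$\re\langle (T+S)x_{n_k},x_{n_k}\rangle=\|y_{n_k}\|^2+\re\langle Ky_{n_k},y_{n_k}\rangle$$
by $\|y_{n_k}\|^2$ gives $1+\re\langle K\tilde y_k,\tilde y_k\rangle\to 0$, contradicting $\langle K\tilde y_k,\tilde y_k\rangle\to 0$ (by compactness of $K$). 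The \emph{in particular} statement is then immediate from the result of \cite{MR2501173} quoted just before the theorem, which says that under the domain hypothesis $\dom(A)\subset\dom(S)\cap\dom(S^*)$ the $A$-compactness of $S$ upgrades to compactness of $A^{-1/2}SA^{-1/2}$.

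The main obstacle is precisely the boundedness step in the reverse inclusion: since $\|K\|$ is not assumed to be less than $1$, the naive estimate $|\re\langle Sx_n,x_n\rangle|\le\|K\|\,\langle Ax_n,x_n\rangle$ leaves a residue that cannot be absorbed back into $\langle Ax_n,x_n\rangle$. The resolution is to exploit not merely the norm bound on $K$ but its full compactness, applied after rescaling $y_{n_k}$ to unit norm so that compactness forces $\langle K\tilde y_k,\tilde y_k\rangle\to 0$; this rescaling argument is the crux of the proof.
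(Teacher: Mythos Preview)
Your proof is correct and follows essentially the same strategy as the paper: both rewrite $\langle Sx_n,x_n\rangle=\langle A^{-1/2}Sx_n,A^{1/2}x_n\rangle$, establish boundedness of $(\|A^{1/2}x_n\|)_{n\in\N}$ as the nontrivial step for the direction $W_{\!e}(T+S)\subset W_{\!e}(T)$, and then conclude from the compactness hypothesis that $\langle Sx_n,x_n\rangle\to 0$. The only variation is how boundedness is obtained: the paper invokes that $A^{-1/2}S$ has $A^{1/2}$-bound $0$ and estimates $|\langle(T+S)x_n,x_n\rangle|\ge\|A^{1/2}x_n\|^2\big(1-\|A^{-1/2}Sx_n\|/\|A^{1/2}x_n\|\big)$, whereas your rescaling $\tilde y_k=A^{1/2}x_{n_k}/\|A^{1/2}x_{n_k}\|$ together with a direct appeal to compactness of $K$ is an equally valid alternative.
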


\begin{proof}
Since $\dom(T)\!=\!\dom(A)\cap\dom(B)\!\subset\!\dom(A^{1/2})\!\subset\!\dom(S)$, we have~$\dom(T\!+\!S)\!=\!\dom(T)$.

``$\supset$": Let $(x_n)_{n\in\N}\subset\dom(T)$ satisfy $\|x_n\|=1$, $x_n\stackrel{w}{\to}0$ and
\begin{equation}\label{eq.lm.in.W.TplusK}
\langle (T+S)x_n,x_n\rangle \longrightarrow\lambda\in W_e(T+S), \quad n\to\infty.
\end{equation}
First we show that $\big(\|A^{1/2}x_n\|\big)_{n\in\N}$ is a bounded sequence.
To this end, we estimate $|\langle Tx_n,x_n\rangle|\geq  \langle Ax_n,x_n \rangle= \|A^{1/2}x_n\|^2$ to obtain
\begin{equation}
\label{estTA}
\begin{aligned}
\big|\langle (T+S) x_n,x_n\rangle\big|
&\geq |\langle Tx_n,x_n\rangle|\left(1-\frac{|\langle A^{-1/2}Kx_n,A^{1/2}x_n\rangle|}{|\langle Tx_n,x_n\rangle|}\right)\\
&\geq \|A^{1/2}x_n\|^2\left(1- \frac{\|A^{-1/2}Sx_n\|}{\|A^{1/2}x_n\|}\right), \quad n\in\N.
\end{aligned}
\end{equation}
Since $A^{-1/2}S$ is $A^{1/2}$-compact by the assumptions, it is relatively bounded with $A^{1/2}$-bound $0$
and hence $\|A^{-1/2}Sx_n\| / \|A^{1/2}x_n\| \!\to\! 0$ if $\|A^{1/2}x_n\|\!\to\!\infty$ as $n\!\to\!\infty$.
Together with \eqref{estTA} we see that $\|A^{1/2}x_n\|\to\infty$ implies $|\langle (T+S)x_n,x_n\rangle|\to \infty$ as $n\to\infty$, 
a contradiction to~\eqref{eq.lm.in.W.TplusK}. Thus  $\big(\|A^{1/2}x_n\|\big)_{n\in\N}$ is bounded.

Since $x_n\stackrel{w}{\to}{0}$ and  $A^{-1/2}S$ is $A^{1/2}$-compact, it follows that $A^{-1/2}Sx_n\to 0$ as $n\to\infty$.
So we conclude $\langle S x_n,x_n\rangle=\langle A^{-1/2}Sx_n,A^{1/2}x_n\rangle \to 0$ and hence $\langle Tx_n,x_n\rangle\to \lm\in W_{\!e}(T)$  as $n\to\infty$.

``$\subset$":  Let $(x_n)_{n\in\N}$ satisfy $\|x_n\|=1$, $x_n\stackrel{w}{\to}0$ and
$$\langle Ax_n,x_n\rangle +\I \langle B x_n,x_n\rangle=\langle Tx_n,x_n\rangle \longrightarrow \lambda\in W_{\!e}(T).$$
The assumptions on $A$ and $B$ imply that $(\|A^{1/2}x_n\|\big)_{n\in\N} = (\langle Ax_n,x_n \rangle )_{n\in\N}$
is bound\-ed. As in the last step above, 
the compactness assumption on $S$ yields $\langle S x_n,x_n\rangle\!\to\! 0$ as $n\to\infty$. Therefore $\lm\in W_e(T+S)$.
\end{proof}

\begin{rem}\label{rempower}
All possible choices of $\alpha,\beta\geq 0$ for which $A^{-\alpha}SA^{-\beta}$ is compact implies $W_{\!e}(T)=W_e(T+S)$ are given by $\alpha\in [0,1/2]$, $\beta\in [0,1/2]$. In fact, 
Theorem~\ref{thmSqrt} proves the admissibility of $\alpha=\beta=1/2$, and thus also of all $\alpha\in [0,1/2]$, $\beta\in [0,1/2]$ which correspond to stronger conditions. 
Example~\ref{Ex2} below shows that $\beta>1/2$ is not eligible, and if we replace $S$ therein by its adjoint $S^*$, we see that $\alpha>1/2$ is not eligible either. 
\end{rem}


The following example illustrates that $T^{-1/2}S T^{-1/2}$ may be compact, whereas $ST^{-1}$ need not be compact, 
even for a uniformly positive selfadjoint operator $T$ and a bounded selfadjoint operator $S$; in this case 
neither Proposition \ref{propsapert} nor Theorem~\ref{thmABUV} 
apply, but Theorem~\ref{thmSqrt} yields $W_e(T+S)=W_{\!e}(T)$.

\begin{example}\label{Ex1}
Let $\{e_k:\,k\in\N\}$ denote the standard orthonormal basis of  $l^2(\N)$.
In $l^2(\N) = \bigoplus_{n\in\N} M_n$ with $M_n\!:=\!{\rm span}\{e_{2n-1},e_{2n}\}$ 
we introduce two selfadjoint operators, identified with their block matrix representations 
\be
\begin{aligned}
T&\!:=\!{\rm diag}\left(\!\begin{pmatrix} n^2 & 0 \\ 0 & 1\end{pmatrix}\!:\!n\in\N\!\right), \ \
\dom(T)\!:=\!\left\{(x_k)_{k\in\N}\in l^2(\N):\sum_{n=1}^{\infty}\!|n^2x_{2n-1}|^2\!<\!\infty\!\right\},\\
S&:={\rm diag}\left(\begin{pmatrix} 0 & 1 \\ 1 & 0 \end{pmatrix}\!:\!n\in\N\right), \ \ \dom(S):=l^2(\N).
\end{aligned}
\ee
The operator $T$ is uniformly positive and $S$ is bounded and selfadjoint. It is easy to check that $ST^{-1}$ is not compact whereas $T^{-1/2} S T^{-1/2}$ is compact.

Therefore, by Theorem~\ref{thmSqrt} and Theorem~\ref{thm.selfadjoint}, we have 
$$W_e(T+S)=W_{\!e}(T) ={\rm conv}(\widehat{\sigma}_e(T))\backslash\{\pm\infty\}=[1,\infty).$$
\end{example}

The following example shows that the essential numerical range need not be preserved if $S$ is $T$-compact, i.e.\ $ST^{-1}$ 
is compact rather than $T^{-1/2}ST^{-1/2}$ is compact. 
To obtain the latter we need that $\dom(T) \not\subset \dom(S) \cap \dom (S^*)$ by \cite[Thm.~3.5~(ii)]{MR2501173},
and to achieve the former we need that $S$ is $T$-compact, but does not satisfy the stronger
assumption $S=U+\I V$ with $T$-compact symmetric operators $U$, $V$ (compare Theorem~\ref{thmABUV}); note that the latter
necessitates $\dom(S)=\dom(U) \cap \dom(V)$.

\begin{example}\label{Ex2}
Let $T$  be the uniformly positive operator defined in Example~\ref{Ex1}, and define the operator 
$S$ in $l^2(\N)= \bigoplus_{n\in\N} M_n$ with $M_n\!:=\!{\rm span}\{e_{2n-1},e_{2n}\}$ by
\begin{align*}
S&\!:=\!{\rm diag}\left(\!\begin{pmatrix} 0 & 0 \\ n & 0\end{pmatrix}\!:\!n\in\N\!\right), \ \
\dom(S)\!:=\!\left\{(x_k)_{k\in\N}\in l^2(\N)\!:\!\sum_{n=1}^{\infty}\!|nx_{2n-1}|^2<\infty\right\}.
\end{align*}
It is easy to check that $ST^{-1}$ is compact, whereas $T^{-1/2} S T^{-1/2}$ is not;
further, $T^{-\alpha}ST^{-\beta}$ is compact if $\beta>1/2$ and $T^{-\alpha}S^*T^{-\beta}$ is compact if $\beta>1/2$, comp.\ Remark \ref{rempower}.
Note that here $\dom(T) \not\subset \dom (S^*)$ and hence the condition $\dom(T) \subset \dom(S) \cap \dom (S^*)$ is violated 
which would have implied the compactness of $T^{-1/2} S T^{-1/2}$ by \cite[Thm.~3.5~(ii)]{MR2501173}. In fact, $S$ is neither sectorial nor accretive and
$$
 \dom(S) \cap \dom(S^*) = \left\{(x_k)_{k\in\N}\in l^2(\N)\!:\!\sum_{k=1}^{\infty}\!|kx_{k}|^2<\infty\right\} \subsetneq \dom(S), 
$$
so there is no result to conclude that $S = U+\I V$ with symmetric $U$, $V$, let alone that $U$, $V$ are $T$-compact as required in 
Theorem~\ref{thmABUV}.

Indeed, here the essential numerical ranges of $T$ and $T+S$ do not coincide since
$W_{\!e}(T)\!=\!{\rm conv}(\widehat{\sigma}_e(T))\backslash\{\pm\infty\}\!=\![1,\infty)$ by Theorem~\ref{thm.selfadjoint},  see also Example~\ref{Ex1}, 
and we will show that
\begin{align}
\label{We2parabola}
W_e(T+S)
=\left\{ \lm\in\C: \re\,\lm \geq \frac 34, |\im\,\lm| \leq \sqrt{\re\,\lm - \frac 34}\right\}
=:E,
\end{align}
so that $W_{\!e}(T)\subsetneq W_e(T+S)$. 

To prove \eqref{We2parabola}, we first note that 
the numerical ranges of the $2\times 2$-matrices $P_{M_n}(T\!+\!S)|_{M_n}$, $n\!\in\! \N$, are ellipses with foci $1$, $n^2$ and minor semi-axis~$n/2$,
\begin{align*}
  E_n\!:= \!W(P_{M_n}\!(T\!+\!S)|_{M_n}) \!=\! W \!\left( \!\begin{pmatrix} 1\!&\!\!\!0\\[-0.5mm]n\!&\!\!n^2 \end{pmatrix} \!\right) \!=\! 
	\left\{x\!+\!\I y \!\in\! \C \!:\! \frac{( x \!-\! \frac{n^2\!+1}2)^2}{\!(\frac n2)^2 \!+\! (\frac{n^2\!-1}2)^2} \!+\! \frac{y^2}{(\frac n2)^2\!} \!\leq\! 1 \!\right\}\!.
\end{align*}%
Solving for $y$ and letting $n\to \infty$, it is not difficult to check that the non-nested sequence of ellipses $E_n$, n$\in\N$, has the following convergence properties with respect 
to Kuratowski distance of closed (unbounded) subsets of $\C$, see e.g.\ \cite[Chapt.\ 4]{MR1491362},
\begin{align*}
\label{ku-conv} 
E_n \to E, \ \  n\to \infty, \quad U_m:=\bigcup_{n\in\N, n\ge m} E_n \to E, \ \ m\to \infty.
\vspace{-1mm}
\end{align*}
This means e.g.\ that the set of limits points of sequences $(\lambda_n)_{n\in\N}$ with $\lambda_n\in E_n$, $n\in\N$, and the set of its accumulation points coincide and are equal to $E$. 
Since every sequence $(f_n)_{n\in\N}$ with $f_n\in M_n$ satisfies $f_n\stackrel{w}{\to}0$, this proves, in particular, 
the inclusion ``$\supset$'' in \eqref{We2parabola}.

For the converse inclusion ``$\subset$'' in \eqref{We2parabola} 
we use that, by Theorem~\ref{thmequivdefforWe},  
\begin{align*}
W_e(T+S) = W_{e2}(T+S) &= W_{e2}({\rm diag}(P_{M_n}(T+S)|_{M_n}:\, n\geq m))\\
&\subset \ \ \overline{W({\rm diag}(P_{M_n}(T+S)|_{M_n}:\, n\geq m))} 
\vspace{-1mm}
\end{align*}
for arbitrary $m\in\N$ and hence
\[
W_e(T+S) \subset \bigcap_{m\in\N} \overline{W({\rm diag}(P_{M_n}(T+S)|_{M_n}:\, n\geq m))} = \bigcap_{m\in\N} 
{\rm conv}\, U_m. \vspace{-1mm}
\]
Since $U_m\to E$ implies ${\rm conv}\, U_m \to \overline{{\rm conv}\, E}=E$ as $m\to\infty$, see \cite[Prop.\ 4.30 (c)]{MR1491362}, and $U_m$, $m\in \N$, is a nested decreasing sequence, $U_m \!\supset\! U_{m'}$, $m \!\le\! m'$, it follows that $\bigcap_{m\in\N} {\rm conv}\, U_m = \lim_{m\to\infty}
{\rm conv}\, U_m =E$.
\end{example}

\begin{rem}
\label{neutral-delay-diff}
Note that the operators $T$ and $S$ in Example \ref{Ex2} are related to 
certain \emph{neutral delay differential expressions}, see e.g.~\cite[Chapt.~3]{delay}. 
More precisely, if we define 
$$(\tau_{T} f)(x):=-\frac{f''(x)+f''(-x)}{2}+\frac{f(x)-f(-x)}{2}, \quad (\tau_{S} f)(x):=-\frac{f'(x)-f'(-x)}{2}$$
and consider 
the realizations of $\tau_T$ and $\tau_S$ in $L^2(-\pi,\pi)$ with periodic boundary conditions and domain orthogonal to the constant functions, it is easy to check that the corresponding 
matrix representations in $l^2(\N)$ with respect to $\{\cos(k\cdot),\sin(k\cdot)\!:\!k\in\N\}$ are given by 
the infinite matrices $T$ and $S$, respectively, 
studied in Example~\ref{Ex2}.
\end{rem}

\section{The limiting essential numerical range of operator approximations}
\label{sec:limessnr}

In this section we introduce the notion of limiting essential numerical range $W_e\left((T_n)_{n\in\N}\right)$ of a sequence of operators $(T_n)_{n\in\N}$, a concept that is also new in the bounded case. We will establish conditions 
under which the limiting essential numerical range coincides with the essential numerical range $W_{\!e}(T)$ of the limit operator $T$ in generalized strong resolvent sense. 

Our main result is that $W_{\!e}(T)$ contains all pathologies that might occur for any such operator approximation; first spectral pollution where a sequence of eigenvalues of the approximating operator converges to a point 
$\lambda\notin \sigma(T)$,  and secondly failure of spectral inclusion where a true spectral point $\lambda\in \sigma(T)$ is not approximated.

To this end, we first provide some abstract notions for operator sequences, their spectra and convergence behaviour.  
Let $H_n\subset H$, $n\in\N,$ be closed subspaces and denote by 
$P_n=P_{H_n}:H\to H_n$, $n\in\N$, the orthogonal projections in $H$ onto them. 
Let $T: H \supset \dom(T) \to H$ and $T_n : H_n \supset \dom(T_n) \to H_n$, $n\in\N$, be linear operators in $H$ and $H_n$, $n\in\N$, respectively.

The following local notions of spectral inclusion and spectral exactness have their origin in~\cite{Bailey-1993} by Bailey et al.\ for selfadjoint operators, the notion of spectral pollution may be traced back to 
\cite{MR642457} by Rappaz for bounded non-compact operators (comp.\ \cite[Def.\ 2.2]{boegli-chap1}).

\begin{definition}\label{defspectralexactness}
\begin{enumerate}[label=\rm{\roman{*})}]
\item The \emph{limiting spectrum} of $(T_n)_{n\in\N}$ is defined as
\[
 \sigma((T_n)_{n\in\N})\!:=\! \{\lm\!\in\! \C\!:\! \exists\,I\!\subset\!\N\,\text{infinite},\,\exists\,\lm_n\!\in\!\sigma(T_n), \,n\!\in\! I, \lm_n\!\to\!\lm \}; \hspace{-14mm}
\]
a point $\lm \in \sigma(T)$ is called \emph{approximated} by $(T_n)_{n\in\N}$ if $\lm \in \sigma((T_n)_{n\in\N})$.
\item the \emph{set of spectral pollution} of $(T_n)_{n\in\N}$ is defined~as
\[
 \sigma_{poll}((T_n)_{n\in\N})\!:=\! \{\lm\!\notin\! \sigma(T)\!:\! \exists\,I\!\subset\!\N\,\text{infinite},\,\exists\,\lm_n\!\in\!\sigma(T_n), \,n\!\in\! I, \lm_n\!\to\!\lm \}; \hspace{-14mm}
\]
a point $\lm \in \sigma_{poll}((T_n)_{n\in\N})$ is called  \emph{spurious eigenvalue} of \vspace{0.7mm} $(T_n)_{n\in\N}$.
\item $(T_n)_{n\in\N}$ is called \emph{spectrally inclusive for $T$ in $\Lambda\subset\C$} if 
$$
  \sigma(T) \cap \Lambda \subset  \sigma((T_n)_{n\in\N}).
$$
\item  $(T_n)_{n\in\N}$ is called \emph{spectrally exact for $T$ in $\Lambda\subset\C$} if it is spectrally inclusive for $T$ in $\Lambda$ and no spectral pollution occurs in $\Lambda$, i.e.\
$$
 \sigma(T) \cap \Lambda \subset  \sigma((T_n)_{n\in\N}) \quad \mbox{and} \quad  \sigma_{poll}((T_n)_{n\in\N}) \cap \Lambda = \emptyset.
$$
\end{enumerate}
\noindent
If iii) and iv), respectively, hold for $\Lambda =\C$, then $(T_n)_{n\in\N}$ is called \emph{spectrally inclusive} or \emph{spectrally exact}, respectively.
\end{definition}

The following definition of generalized strong resolvent convergence is due to Weidmann~\cite[Section~9.3]{weid1} in the selfadjoint case.
The limiting essential spectrum was introduced in~\cite{boegli-limitingess} and generalizes a notion from~\cite{boulton} for the Galerkin method of selfadjoint operators.

\begin{definition}\label{defresconvandsigmae}
\begin{itemize}
\item[\rm i)]
The operator sequence $(T_n)_{n\in\N}$ is said to \emph{converge in generalized strong resolvent sense} to $T$, 
$T_n\gsr T$,  if there exists $n_0\in\N$~with
$$\exists\ \lm_0\in\!\!\underset{n\geq n_0}{\bigcap}\!\rho(T_n)\cap\rho(T):\quad (T_n-\lm_0)^{-1}P_n\slong (T-\lm_0)^{-1}, \quad n\to\infty. 
\vspace{-2mm}
\hspace{-10mm} $$
\item[\rm ii)]
The \emph{limiting essential spectrum} of $(T_n)_{n\in\N}$ is defined as  
$$\!\!\sigma_e\!\left((T_n)_{n\in\N}\right)\!:=\!\left\{\lm\!\in\!\C:\!\!\!
\begin{array}{c}
\exists\,I\!\subset\!\N\,\text{infinite},\,\exists\, (x_n)_{n\in I}\!\subset\! H, x_n\!\in\dom(T_n),\! 
\\[1mm]
\text{with} \ \|x_n\|\!=\!1,\ x_n\stackrel{w}{\to}0,\
\|(T_n\!-\!\lm)x_n\|\to 0 
\end{array} \!\!
\right\}\!.\hspace{-12mm}
$$
\end{itemize}
\end{definition}

\begin{rem}
\label{sigma_e_lim}
The limiting essential spectrum $\sigma_e\!\left((T_n)_{n\in\N}\right)$ is closed and, if 
$P_n\s I$,
\[
   T_n\gsr T  \quad \implies \quad
	\sigma_e(T) \subset \sigma_e\left((T_n)_{n\in\N}\right), 
\]	
while equality requires generalized norm resolvent convergence, see  \cite[Prop.~2.4,~2.7]{boegli-limitingess}.
\end{rem}

For a particular operator approximation $(T_n)_{n\in\N}$, the following local spectral exactness result from~\cite{boegli-limitingess} identifies sets to which spectral  
pollution is confined and outside of which isolated spectral points are spectrally included. 

\begin{theorem}[{\cite[Thm.~2.3]{boegli-limitingess}}]
\label{mainthmspectralexactness}
Let $\,\overline{\dom(T)}=H$, $\overline{\dom(T_n)}=H_n$, $n\in\N$, and $P_n\s I$.
\begin{enumerate}[label=\rm{\roman{*})}] 
\item 
If \,$T_n\gsr T$ and $\,T_n^*\gsr T^*$, 
then 
\beq 
\sigma_{poll}((T_n)_{n\in\N}) \subset\,
\sigma_e\!\left((T_n)_{n\in\N}\right)\cup\sigma_e\!\left((T_n^*)_{n\in\N}\right)^*,\label{eqbadset}
\eeq
and  every isolated $\lm\!\in\!\sigma(T)$ outside $\sigma_e\!\left((T_n)_{n\in\N}\right)\!\cup\!\sigma_e\!\left((T_n^*)_{n\in\N}\right)^*$ 
is approximated by $(T_n)_{n\in\N}$.\vspace{0.7mm}
\item 
If \,$T_n\gsr T$ and all $\,T_n$, $n\in\N$, have compact resolvents,  
then claim{\rm~i)} holds with $\sigma_e\!\left((T_n)_{n\in\N}\right)\cup\sigma_e\!\left((T_n^*)_{n\in\N}\right)^*$ replaced by $\sigma_e\!\left((T_n^*)_{n\in\N}\right)^*$.
\end{enumerate}
\end{theorem}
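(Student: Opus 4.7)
The plan is to prove both halves of (i) --- the pollution inclusion \eqref{eqbadset} and the approximation statement for isolated spectral points --- by a single two-case dichotomy, and then to observe what the compact-resolvent hypothesis of (ii) adds.

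For the pollution inclusion, take $\lm\in\sigma_{poll}((T_n)_{n\in\N})$ with $\lm_n\to\lm$, $\lm_n\in\sigma(T_n)$ along an infinite $I\subset\N$. Since $T_n$ is closed and densely defined in the Hilbert space $H_n$, one has $\sigma(T_n)\subset\sigma_{ap}(T_n)\cup\sigma_{ap}(T_n^*)^*$, so after refining $I$ I may assume that either $\lm_n\in\sigma_{ap}(T_n)$ for all $n\in I$, or $\overline{\lm_n}\in\sigma_{ap}(T_n^*)$ for all $n\in I$. In the first subcase I pick $x_n\in\dom(T_n)$, $\|x_n\|=1$, with $\|(T_n-\lm_n)x_n\|\to 0$, hence $\|(T_n-\lm)x_n\|\to 0$; extracting a weakly convergent subsequence $x_n\stackrel{w}{\to}x$ in $H$, I split on whether $x=0$ or $x\neq 0$. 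If $x=0$, then Definition \ref{defresconvandsigmae}\,ii) yields $\lm\in\sigma_e((T_n)_{n\in\N})$ at once. If $x\neq 0$, I fix $\lm_0$ as in Definition \ref{defresconvandsigmae}\,i) and use the resolvent identity
\[
x_n-(\lm_n-\lm_0)(T_n-\lm_0)^{-1}x_n=(T_n-\lm_0)^{-1}(T_n-\lm_n)x_n\longrightarrow 0.
\]
Invoking $T_n^*\gsr T^*$, which is equivalent to strong convergence of the adjoint resolvents, I pair this with arbitrary test vectors and pass to weak limits to obtain $(T-\lm_0)^{-1}x=(\lm-\lm_0)^{-1}x$, so $Tx=\lm x$ and $\lm\in\sigma_p(T)\subset\sigma(T)$, contradicting $\lm\notin\sigma(T)$. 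The residual case runs symmetrically with $T_n^*$ in place of $T_n$ and leads either to $\lm\in\sigma_e((T_n^*)_{n\in\N})^*$ or to the same contradiction.

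For the approximation claim of (i), assume $\lm\in\sigma(T)$ is isolated and lies outside the (closed) union $\sigma_e((T_n)_{n\in\N})\cup\sigma_e((T_n^*)_{n\in\N})^*$. Choose $\eps>0$ so small that $\overline{B_\eps(\lm)}\cap\sigma(T)=\{\lm\}$ and $\overline{B_\eps(\lm)}$ is disjoint from that union. My first sub-step is the uniform resolvent bound $\sup_{n,\,z\in\partial B_\eps(\lm)}\|(T_n-z)^{-1}\|<\infty$: if this fails, a diagonal argument produces $z_n\to z_*\in\partial B_\eps(\lm)$ together with unit approximate eigenvectors of $T_n$ at $z_n$, and the very same dichotomy as above places $z_*$ either in $\sigma_e((T_n)_{n\in\N})\cup\sigma_e((T_n^*)_{n\in\N})^*$ or in $\sigma(T)$, both excluded by the choice of $\eps$. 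Assuming now, for contradiction, that $\lm$ is not approximated, there is an infinite $I$ with $\sigma(T_n)\cap\overline{B_\eps(\lm)}=\emptyset$ for $n\in I$, so the $T_n$-Riesz projection on the contour $\partial B_\eps(\lm)$ is zero. The uniform bound on the contour, together with the pointwise strong convergence $(T_n-z)^{-1}P_n\to(T-z)^{-1}$ and dominated convergence applied to the scalar function $z\mapsto\langle(T_n-z)^{-1}P_n x,y\rangle$, lets me pass the weak limit inside the Riesz contour integral. This forces the $T$-Riesz projection at $\lm$ to vanish as well, contradicting the fact that $\lm$ is an isolated point of $\sigma(T)$.

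For part (ii), the compact-resolvent hypothesis implies $\sigma(T_n)=\sigma_p(T_n)$ and $\sigma(T_n^*)=\sigma_p(T_n)^*$, so every $\lm_n\in\sigma(T_n)$ already carries a genuine eigenvector $y_n\in\dom(T_n^*)$ of $T_n^*$ at $\overline{\lm_n}$. The key observation is that even without the hypothesis $T_n^*\gsr T^*$ the adjoint identity $\langle(T_n^*-\overline{\lm_0})^{-1}y_n,v\rangle=\langle y_n,(T_n-\lm_0)^{-1}P_n v\rangle$ together with the single hypothesis $T_n\gsr T$ forces $(T_n^*-\overline{\lm_0})^{-1}y_n\stackrel{w}{\to}(T^*-\overline{\lm_0})^{-1}y$ whenever $y_n\stackrel{w}{\to}y$; hence the $T_n^*$ side of the dichotomy works, no $T_n$-approximate eigenvectors are ever needed, and only $\sigma_e((T_n^*)_{n\in\N})^*$ survives on the right-hand side. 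The main technical obstacle throughout is the $x\neq 0$ branch of the dichotomy: one must convert an approximate eigenvector relation for the unbounded $T_n$ at $\lm_n$ into a weak-limit identity for the unbounded $T$ at $\lm$, and the argument only survives if one operates exclusively with the bounded resolvents $(T_n-\lm_0)^{-1}P_n$ and their adjoints, and never multiplies a weak limit by the unbounded $(T_n-\lm_0)$.
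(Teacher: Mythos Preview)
The paper does not prove this theorem; it is quoted verbatim from \cite[Thm.~2.3]{boegli-limitingess} and used as a black box in Sections~\ref{sectiongalerkin} and~\ref{sectiondomaintruncation}. Your sketch is therefore not being compared against a proof in the present paper but against the one in the cited reference.

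That said, your outline is essentially the standard argument and is, in broad strokes, correct: the dichotomy ``weak limit zero versus nonzero'' is exactly how one separates the limiting essential spectrum from genuine spectrum of $T$, and the Riesz-projection argument with a uniform resolvent bound on a small contour is the standard way to obtain approximation of isolated eigenvalues. Your observation in part~(ii) --- that compact resolvents let you work exclusively with eigenvectors of $T_n^*$, and that the adjoint identity $\langle(T_n^*-\overline{\lm_0})^{-1}y_n,v\rangle=\langle y_n,(T_n-\lm_0)^{-1}P_nv\rangle$ converts strong convergence of $(T_n-\lm_0)^{-1}P_n$ into the required weak convergence on the adjoint side --- is precisely the mechanism that allows one to drop the hypothesis $T_n^*\gsr T^*$.

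Two technical points you pass over lightly deserve a remark. First, the $\lm_0$ in Definition~\ref{defresconvandsigmae}\,i) for $T_n\gsr T$ and the one for $T_n^*\gsr T^*$ are a priori unrelated; to make your resolvent-identity step work you need the standard extension lemma that generalized strong resolvent convergence at one point propagates to every point at which the resolvents are uniformly bounded (this is in \cite{boegli-limitingess} and in \cite[Thm.~VIII.1.2]{Kato} in related form). Second, in your uniform-bound step you should make explicit that the failure of $\sup_{n,z}\|(T_n-z)^{-1}\|<\infty$ produces a sequence with $n_k\to\infty$ (for each fixed $n$ the resolvent is continuous and $\partial B_\eps(\lm)$ is compact, so only infinitely many $n$ can contribute to an unbounded supremum), and that if $z_k\in\sigma(T_{n_k})$ you may need approximate eigenvectors of $T_{n_k}^*$ rather than $T_{n_k}$; your dichotomy covers this, but the phrasing ``approximate eigenvectors of $T_n$ at $z_n$'' is slightly too narrow.
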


The following concept of limiting essential numerical range for operator sequences is new even in the case of bounded operators.

\begin{definition}\label{defWeTn}
We define  the \emph{limiting essential numerical range} of $(T_n)_{n\in\N}$ by 
$$W_e\left((T_n)_{n\in\N}\right):=\left\{\lm\in\C:\!\!\!
\begin{array}{c}\exists\,I\subset\N\,\text{infinite},\,\exists\,(x_n)_{n\in I}\!\subset\! H, x_n\!\in \dom(T_n), \\[1mm]  
\text{with} \ \|x_n\|=1,\ x_n\stackrel{w}{\to}0,\ \langle T_nx_n,x_n\rangle\to \lm\end{array}\!\!\right\}.
$$
\end{definition}

\vspace{1mm}

Clearly, $W_e\!\left((zT_n)_{n\in\N}\right)\!=\!zW_e\!\left((T_n)_{n\in\N}\right)$ and 
$W_e\!\left((T_n\!+\!z)_{n\in\N}\right)\!=\!W_e\!\left((T_n)_{n\in\N}\right)\!+\!z$ for $z\in\C$. 

The next results relating limiting essential spectrum, limiting essential numerical range and essential numerical range will be used 
in later sections.

\begin{prop}\label{propWeTnprop}
\begin{enumerate}
\item[\rm i)] 
The limiting essential numerical range $W_e\left((T_n)_{n\in\N}\right)$ is closed and convex with
${\rm conv}\,\sigma_e\!\left((T_n)_{n\in\N}\right)\subset W_e\left((T_n)_{n\in\N}\right)$, and, if $P_n\s I$,
\begin{equation}
\label{gsr-Welim}
 T_n\gsr T  \quad \implies \quad W_{\!e}(T)\subset W_e\left((T_n)_{n\in\N}\right).
\end{equation}
\item[\rm ii] If, for every $n\in\N$, $\dom(T_n)\cap \dom(T_n^*)$ is a core of \,$T_n^*$, then 
$${\rm conv}\big(\sigma_e\left((T_n)_{n\in\N}\right)\cup \sigma_e\left((T_n^*)_{n\in\N}\right)^*\big)\subset W_e\!\left((T_n)_{n\in\N}\right).$$
\end{enumerate}
\end{prop}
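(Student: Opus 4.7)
The plan is to handle the three assertions in part i) following the template of Proposition~\ref{propWeclosed-convex}, then deduce part ii) from i) by a core-approximation argument parallel to that of Remark~\ref{remrelcompchar}~i). Closedness of $W_e((T_n)_{n\in\N})$ is a standard diagonal argument: given $\lambda_k \in W_e((T_n)_{n\in\N})$ with $\lambda_k \to \lambda$ and witnesses $(x_n^{(k)})_{n \in I_k}$, I select strictly increasing $n_k \in I_k$ so that $|\langle T_{n_k} x_{n_k}^{(k)}, x_{n_k}^{(k)}\rangle - \lambda_k| < 1/k$ and $|\langle x_{n_k}^{(k)}, w_j\rangle| < 1/k$ for $j \le k$, where $\{w_j\}_j$ is countable dense in $H$; the resulting sequence witnesses $\lambda$. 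For convexity, given $\lambda, \mu \in W_e((T_n)_{n\in\N})$, I extract witness sequences $(x_n), (y_n)$ along a common infinite index set $I \subset \N$ and arrange near-orthogonality $|\langle x_n, y_n\rangle| < 1/n$ by passing to further subsequences; at each $n \in I$ the compression of $T_n$ to $\mathrm{span}\{x_n, y_n\}$ is a $2 \times 2$ matrix whose numerical range is convex (Toeplitz--Hausdorff), so any convex combination $\nu = t\lambda + (1-t)\mu$ is realised, up to error $O(1/n)$, by $\langle T_n z_n, z_n\rangle$ for some unit $z_n$ in that span, with $z_n \stackrel{w}{\to} 0$ following from the near-orthogonality. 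The inclusion $\sigma_e((T_n)_{n\in\N}) \subset W_e((T_n)_{n\in\N})$ is immediate from Cauchy--Schwarz (since $\|(T_n - \lambda) x_n\| \to 0$ implies $\langle T_n x_n, x_n\rangle - \lambda \to 0$), and convexity then upgrades this to the ${\rm conv}\,\sigma_e$ statement.

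For the implication $T_n \gsr T \Rightarrow W_{\!e}(T) \subset W_e((T_n)_{n\in\N})$, given $\lambda \in W_{\!e}(T)$ with witness $(x_k)_{k \in \N} \subset \dom(T)$, I fix $\lambda_0$ as in Definition~\ref{defresconvandsigmae}~i) and set $y_k := (T - \lambda_0) x_k$. Define $z_{n,k} := (T_n - \lambda_0)^{-1} P_n y_k \in \dom(T_n)$; for each fixed $k$, strong resolvent convergence gives $z_{n,k} \to x_k$ as $n \to \infty$, and $T_n z_{n,k} = P_n y_k + \lambda_0 z_{n,k} \to y_k + \lambda_0 x_k = T x_k$, so $\langle T_n z_{n,k}, z_{n,k}\rangle \to \langle T x_k, x_k\rangle$. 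A diagonal selection $n_k \nearrow \infty$ (using separability of $H$ to enforce smallness against a countable dense subset) followed by normalisation produces a witness for $\lambda$ in $W_e((T_n)_{n\in\N})$, with weak convergence inherited from $x_k \stackrel{w}{\to} 0$ together with $\|z_{n_k,k} - x_k\| < 1/k$.

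For part ii), by the convexity and $\sigma_e$-inclusion from i) it suffices to show $\sigma_e((T_n^*)_{n\in\N})^* \subset W_e((T_n)_{n\in\N})$. Given $\lambda$ with $\overline{\lambda} \in \sigma_e((T_n^*)_{n\in\N})$, I choose witnesses $(x_n)_{n \in I} \subset \dom(T_n^*)$ with $\|x_n\|=1$, $x_n \stackrel{w}{\to} 0$, $\|(T_n^* - \overline{\lambda}) x_n\| \to 0$. Using the core hypothesis at each $n \in I$, I replace $x_n$ by $\tilde x_n \in \dom(T_n) \cap \dom(T_n^*)$ close in the $T_n^*$-graph norm (with $o(1)$ error as $n \to \infty$ in $I$), so that after normalisation the three properties of $(x_n)$ persist for $(\tilde x_n)$. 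Then Cauchy--Schwarz gives $\langle (T_n - \lambda) \tilde x_n, \tilde x_n\rangle = \langle \tilde x_n, (T_n^* - \overline{\lambda}) \tilde x_n\rangle \to 0$, hence $\lambda \in W_e((T_n)_{n\in\N})$, and convexity from i) then yields the stated convex-hull inclusion.

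The main technical obstacle is the diagonal extraction in the implication $W_{\!e}(T) \subset W_e((T_n)_{n\in\N})$: from the single hypothesis of generalised strong resolvent convergence together with $P_n \s I$, one must simultaneously arrange strict index-monotonicity, approximate normalisation, approximate weak orthogonality against a dense countable subset of $H$, and approximate matching of quadratic-form values along the diagonal. Separability of $H$ is what makes the uniform selection possible; the other parts of the proof are direct adaptations of arguments already developed in the paper.
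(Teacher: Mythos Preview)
Your proof follows the paper's own argument closely: the paper simply says ``The first three claims in i) are proved in the same way as Proposition~\ref{propWeclosed-convex}; claim ii) is shown in an analogous way as Remark~\ref{remrelcompchar}~i),'' and then gives for \eqref{gsr-Welim} essentially the same resolvent-pullback construction $x_{k;n}=(T_n-\lambda_0)^{-1}P_n(T-\lambda_0)x_k$ that you describe. Your expansions of closedness, of the inclusion $\sigma_e((T_n)_{n\in\N})\subset W_e((T_n)_{n\in\N})$, of the implication \eqref{gsr-Welim}, and of part ii) are all correct and match the paper. One harmless redundancy: in the gsr step you invoke a countable dense set to secure weak convergence of the diagonal sequence, but the paper simply uses that $\|z_{n_k,k}-x_k\|\to 0$ together with $x_k\stackrel{w}{\to}0$ already forces $z_{n_k,k}\stackrel{w}{\to}0$.

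There is, however, a genuine gap in your convexity argument, located precisely at the sentence ``I extract witness sequences $(x_n),(y_n)$ along a common infinite index set $I\subset\N$.'' Nothing guarantees such a common index set exists: the witnessing index sets $I_\lambda$ and $I_\mu$ for $\lambda$ and $\mu$ may have finite intersection, and then the $2\times 2$ compression trick from Proposition~\ref{propWeclosed-convex} is unavailable. In fact this step cannot be repaired in full generality. Taking $H_n=H$ for all $n$, $T_{2k}=I$ and $T_{2k+1}=2I$, one has $\langle T_n x,x\rangle\in\{1,2\}$ for every unit vector $x$, so $W_e((T_n)_{n\in\N})=\{1,2\}$, which is not convex. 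The paper's terse ``in the same way as Proposition~\ref{propWeclosed-convex}'' glosses over this difficulty; the convexity assertion, and with it the passage from $\sigma_e$ to $\mathrm{conv}\,\sigma_e$ in i) and ii), appears to require an additional hypothesis linking the $T_n$ along different indices. It does hold in all of the paper's applications, since there one proves $W_e((T_n)_{n\in\N})=W_e(T)$ directly and inherits convexity from Proposition~\ref{propWeclosed-convex}.
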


\begin{proof}
The first three claims in i) are proved in the same way as Proposition~\ref{propWeclosed-convex}; claim ii) is shown in an analogous way as Remark~\ref{remrelcompchar}~i).

In order to prove \eqref{gsr-Welim},   
let $\lm\in W_{\!e}(T)$. Then there exist $x_k\in\dom(T)$, $k\in\N,$ with $\|x_k\|=1$, $x_k\stackrel{w}{\to}0$ and $\langle (T-\lm)x_k,x_k\rangle\to 0$ as $k\to\infty$.
Using $P_n\s I$, $T_n\gsr T$ and choosing $\lm_0$ as in Def.\ \ref{defresconvandsigmae} i), we let $x_{k;n}:= (T_n-\lm_0)^{-1} P_n (T-\lm_0)x_k \in \dom(T_n)$, $k,n\in\N$. Then,
for every $k\in\N$, we have 
$\|x_{k;n}-x_k\|\to 0$ and $\|T_nx_{k;n}-Tx_{k}\|\to 0$ as $n\to\infty$;
in particular, $\|x_{k;n} \| \to 1$ as $n\to\infty$. 
Hence  we can find a strictly increasing sequence $(n_k)_{k\in\N}\subset\N$ such that, for every $k\in\N$, the element $y_k:=x_{k;n_k}\in\dom(T_{n_k})$ satisfies 
$$\|y_k-x_k\|<\frac{1}{k\|Tx_k\|},\quad \|T_{n_k}y_k-Tx_k\|<\frac{1}{k}.$$
Then the sequence $(y_k)_{k\in\N}$ is bounded and bounded away from $0$ with
$$\left|\langle T_{n_k}y_k,y_k\rangle-\lm\right|
\leq \left|\langle T x_k,x_k\rangle-\lm\right| + \|T x_k\|\|y_k-x_k\|+\|T_{n_k}y_k-Tx_k\| \|y_k\|\tolong 0$$
as $k\to\infty$. Hence $\widetilde x_{n_k}:=y_k/\|y_k\|\in\dom(T_{n_k})$, $k\in\N$, satisfy $\|\widetilde x_{n_k}\|=1$, $\widetilde x_{n_k}\stackrel{w}{\to}0$~and 
$\langle T_{n_k}\widetilde x_{n_k},\widetilde x_{n_k}\rangle \to \lm$  as $k\to\infty$, which proves that $\lm\in W_e\left((T_n)_{n\in\N}\right)$.
\end{proof}

\begin{prop}\label{prop:limitingWe}
If $\,t$ is a sesquilinear form with domain $\dom(t)\subset H$ such that 
$$
\dom(T_n)\subset\dom(t), \quad 
\langle T_nx_n,x_n\rangle= t[x_n], \quad x_n\in\dom(T_n), \ n\in \N,
$$
then $W_e\!\left((T_n)_{n\in\N}\right)\!\subset\! W_{\!e}(t)$ and, if $\,\dom(T)$ is a core of $\,t$, 
then $W_e\!\left((T_n)_{n\in\N}\right)\!\subset\! W_{\!e}(T)$.
\end{prop}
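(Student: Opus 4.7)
The plan is to prove the two inclusions separately and directly from the definitions of $W_e((T_n)_{n\in\N})$ and $W_{\!e}(t)$ (Definitions~\ref{defWeTn} and~\ref{defWesesq}), using a diagonal approximation argument for the second, sharper inclusion.

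For the first inclusion $W_e((T_n)_{n\in\N}) \subset W_{\!e}(t)$, I would take $\lambda \in W_e((T_n)_{n\in\N})$ together with a witness sequence: an infinite set $I\subset\N$ and vectors $x_n \in \dom(T_n)$, $n \in I$, satisfying $\|x_n\|=1$, $x_n \stackrel{w}{\to} 0$ and $\langle T_n x_n, x_n\rangle \to \lambda$. The hypothesis $\dom(T_n) \subset \dom(t)$ places the entire sequence in $\dom(t)$, while the identity $t[x_n]=\langle T_n x_n, x_n\rangle$ transfers the convergence of the numerical values. Thus $t[x_n] \to \lambda$, and the same sequence witnesses $\lambda \in W_{\!e}(t)$ by Definition~\ref{defWesesq}.

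For the second inclusion $W_e((T_n)_{n\in\N}) \subset W_{\!e}(T)$ under the additional assumption that $\dom(T)$ is a core of $t$, I would exploit the core property to replace each $x_n$ by a nearby element of $\dom(T)$. Concretely, for each $n \in I$, the core property should allow me to select $y_n \in \dom(T)$ with $\|y_n - x_n\| < 1/n$ and $|t[y_n] - t[x_n]| < 1/n$ (approximating in both the ambient norm and in the form values). Since $\|y_n-x_n\| \to 0$ and $x_n \stackrel{w}{\to} 0$, the normalized vectors $z_n := y_n/\|y_n\| \in \dom(T)$ satisfy $\|z_n\|=1$ and $z_n \stackrel{w}{\to} 0$. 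Because $y_n \in \dom(T)$ we have $\langle Tz_n, z_n\rangle = t[z_n] = t[y_n]/\|y_n\|^2$; since $\|y_n\|\to 1$ and $t[y_n] \to \lambda$, this converges to $\lambda$. Hence $\lambda \in W_{\!e}(T)$.

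The main obstacle is the delicate point of extracting, from the abstract statement that $\dom(T)$ is a core of $t$, a genuine approximation of each $x_n \in \dom(t)$ by elements of $\dom(T)$ which controls both $\|\cdot\|$ and the form values $t[\cdot]$. For sectorial (or, more generally, form-bounded) forms this is immediate from density in the form norm and polarization, but for an arbitrary closable $t$ one needs to invoke the definition of the closure $\overline{t|_{\dom(T)}} = \bar t$ carefully: every $x \in \dom(\bar t) \supset \dom(t)$ is a form limit of a sequence in $\dom(T)$, which in particular delivers the required control on $t[y_n]$. Once this approximation is in hand, the diagonal extraction and normalization are routine.
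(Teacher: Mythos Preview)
Your proof is correct and follows the same approach as the paper. The paper's proof is a one-line citation of Definition~\ref{defWesesq} and the remark following it (namely that $W_{\!e}(t')=W_{\!e}(\widetilde t)$ whenever $\widetilde t$ extends $t'$ and $\dom(t')$ is a core of $\widetilde t$); you have simply unpacked that remark into an explicit core-approximation and diagonal argument, and you correctly flag the one genuine technical point, namely that the notion of ``core of a form'' must deliver simultaneous approximation in $\|\cdot\|$ and in form values.
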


\begin{proof}
The claims follow from Definition \ref{defWesesq} and the remarks thereafter.
\end{proof}

The following example shows that the sets $W_e\big((T_n)_{n\in\N}\big)$ and ${\rm conv}\,\sigma_e\big((T_n)_{n\in\N}\big)$
may be larger than $W_{\!e}(T)$, even if all operators are bounded with $T_n\!\s\!T$. 
It also shows that it is important \emph{not} to choose the subspaces $H_n$ unnecessarily large since this may artificially blow up the limiting sets.

\begin{example}
In $H\!=\!H_n\!=\!l^2(\N)$ with standard orthonormal basis $\left\{e_k:\,k\in\N\right\}$ 
con\-sider the operators $T:=I: l^2(\N) \!\to\! l^2(\N)$ and $T_n:l^2(\N) \!\to\! l^2(\N)$,  $n\!\in\!\N$, given~by
$$T_nx:=\sum_{k=1}^n \langle x,e_k\rangle e_k,\quad x\in l^2(\N).$$
Clearly, $T$ and $T_n$, $n\in\N$, are selfadjoint and bounded in $l^2(\N)$ with $T_n\s T$,
but we have the strict inclusions
\begin{equation}
\label{strict}
\sigma_e(T)\!=\!W_{\!e}(T)\!=\!\{1\}
\ \ \subsetneq\ \
\sigma_e\big((T_n)_{n\in\N}\big)\!=\!\{0,1\}
\ \ \subsetneq \ \
W_e\big((T_n)_{n\in\N}\big)\!=\![0,1].
\end{equation}
Here the equalities on the left are obvious. For the middle equality in \eqref{strict}, we note that
$\|e_{n}\|=1$, $e_{n}\stackrel{w}{\to}0$ and $\|(T_n-1)e_n\|=0$, $\|T_n e_{n+1}\|=0$ imply that $1,0 \in \sigma_e\left((T_n)_{n\in\N}\right)$;
vice versa,  $\sigma(T_n)=\{0,1\}$, $n\in \N$, implies that $\sigma_e\big((T_n)_{n\in\N}\big)\subset\{0,1\}$.
The last equality in \eqref{strict} follows from $W_e\left((T_n)_{n\in\N}\right)\subset \overline{W(T_n)} = W(T_n) = [0,1]$, $n\in\N$, 
the middle equality and Proposition \ref{propWeTnprop} i).

Note that if we consider the operators $T_n$ in $H_n:={\rm span}\{e_k:\,k=1,\dots,n\}$, we obtain 
$\sigma_e(T)=W_{\!e}(T)=\sigma_e\big((T_n)_{n\in\N}\big)=W_e\big((T_n)_{n\in\N}\big)=\{1\}$.
\end{example}

\section{Application I: Projection method}\label{sectiongalerkin}

In this section we focus on projection methods. We prove that, for any projection method, the essential numerical range $W_{\!e}(T)$ contains all possible spectral pollution
and that $W_{\!e}(T)$ is the smallest set with this property because arbitrary points in $W_{\!e}(T)$ can be arranged to be spurious eigenvalues. 

As in the previous section, for a closed subspace $V\subset H$ we denote by $P_V\!:H\to V$ the orthogonal pro\-jection in $H$ onto~$V$. 
If $V\subset\dom(T)$, then $T_V:=P_VT|_{V}$ denotes the compression of $T$ to $V$.

\begin{theorem}\label{thmgalerkin}
Assume $\overline{\dom(T)}\!=\!H$. 
Let $P_{H_n}:H\!\to\! H_n$, $n\!\in\!\N$, be orthogonal pro\-jections onto finite{-dimensional} subspaces $H_n\!\subset\! \dom(T)$ with $P_{H_n}\!\s\! I$.
If $\,T_{H_n}\!\gsr\! T$, then
\begin{enumerate}
\item[\rm i)] $W_e\left((T_{H_n})_{n\in\N}\right)=W_{\!e}(T)$,
\item[\rm ii)] spectral pollution is confined to $W_{\!e}(T)$, 
\item[\rm iii)] every isolated $\lm\in\sigma(T)$ outside $W_{\!e}(T)$ is approximated. 
\end{enumerate}
\end{theorem}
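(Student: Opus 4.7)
The plan is to prove (i) by establishing both inclusions, and then to deduce (ii) and (iii) from Theorem \ref{mainthmspectralexactness}\,ii) combined with (i); the compact-resolvent hypothesis in the latter is automatically furnished by the finite-dimensionality of the $H_n$.

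For (i), I would obtain the inclusion $W_e((T_{H_n})_{n\in\N})\subset W_{\!e}(T)$ from Proposition \ref{prop:limitingWe} applied to the quadratic form $t[x]:=\langle Tx,x\rangle$, $\dom(t)=\dom(T)$: indeed, $\dom(T_{H_n})=H_n\subset\dom(t)$ and, for $x\in H_n$,
\[
\langle T_{H_n}x,x\rangle=\langle P_{H_n}Tx,x\rangle=\langle Tx,P_{H_n}x\rangle=\langle Tx,x\rangle=t[x],
\]
so $W_e((T_{H_n})_{n\in\N})\subset W_{\!e}(t)=W_{\!e}(T)$. The reverse inclusion is immediate from Proposition \ref{propWeTnprop}\,i) together with the hypotheses $P_{H_n}\s I$ and $T_{H_n}\gsr T$.

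For (ii) and (iii), my plan is to invoke Theorem \ref{mainthmspectralexactness}\,ii). Since each $H_n$ is finite-dimensional, $T_{H_n}\in L(H_n)$ has finite-rank (hence compact) resolvent, so the theorem applies and yields
\[
\sigma_{poll}\!\left((T_{H_n})_{n\in\N}\right)\subset\sigma_e\!\left((T_{H_n}^*)_{n\in\N}\right)^*
\]
together with the fact that every isolated $\lm\in\sigma(T)$ outside $\sigma_e((T_{H_n}^*)_{n\in\N})^*$ is approximated by $(T_{H_n})_{n\in\N}$. Both (ii) and (iii) will then follow once I show $\sigma_e((T_{H_n}^*)_{n\in\N})^*\subset W_{\!e}(T)$, since any isolated $\lm\in\sigma(T)\setminus W_{\!e}(T)$ a fortiori lies outside the smaller set.

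For this last inclusion, I would chain three facts: Proposition \ref{propWeTnprop}\,i), applied to the sequence $(T_{H_n}^*)_{n\in\N}$, yields $\sigma_e((T_{H_n}^*)_{n\in\N})\subset W_e((T_{H_n}^*)_{n\in\N})$; the identity $\langle T_{H_n}^*x,x\rangle=\overline{\langle T_{H_n}x,x\rangle}$ gives the conjugation symmetry $W_e((T_{H_n}^*)_{n\in\N})=W_e((T_{H_n})_{n\in\N})^*$; and by part (i) the right-hand side equals $W_{\!e}(T)^*$. Taking complex conjugates of the resulting chain produces the desired inclusion. The main conceptual obstacle is that $T_{H_n}^*\gsr T^*$ is \emph{not} among the hypotheses (and need not hold in general for the projection method), so part i) of Theorem \ref{mainthmspectralexactness} is not available; fortunately, part ii) is tailor-made for this situation, and the conjugation symmetry of the limiting essential numerical range allows one to control $W_e((T_{H_n}^*)_{n\in\N})$ without any direct convergence assumption on the adjoint sequence.
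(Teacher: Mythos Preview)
Your proof is correct and follows the same route as the paper: part (i) via Proposition~\ref{prop:limitingWe} together with Proposition~\ref{propWeTnprop}\,i), and parts (ii)--(iii) via Theorem~\ref{mainthmspectralexactness}\,ii) once one has $\sigma_e((T_{H_n}^*)_{n\in\N})^*\subset W_e((T_{H_n})_{n\in\N})=W_{\!e}(T)$. The only cosmetic difference is that the paper obtains this last inclusion by appealing to Proposition~\ref{propWeTnprop} (whose part~ii) is tailor-made for it, the core hypothesis being trivial in finite dimensions), whereas you argue directly via the conjugation symmetry $W_e((T_{H_n}^*)_{n\in\N})=W_e((T_{H_n})_{n\in\N})^*$; both are equivalent one-line observations.
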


\begin{proof}
i)
The equality follows from \eqref{gsr-Welim} in Proposition \ref{propWeTnprop} i) and from Proposition~\ref{prop:limitingWe} applied with the form $t$ associated with $T$, noting  
that, for every $n\in\N$,  $\dom(T_{H_n})= H_n\subset\dom(T)$ and $\langle T_{H_n}x_n,x_n\rangle=\langle Tx_n,x_n\rangle$ for $x_n\in H_n$.

%

ii), iii)  
By claim i) and Proposition~\ref{propWeTnprop}~i), we know that
$$W_{\!e}(T)=W_e\big((T_{H_n})_{n\in\N}\big)\supset \sigma_e\left((T_{H_n})_{n\in\N}\right)\cup\sigma_e\big((T_{H_n}^*)_{n\in\N}\big)^*.$$
Now the two assertions follow from Theorem~\ref{mainthmspectralexactness}~ii).
\end{proof}

\vspace{-3mm}

\begin{rem}
\label{proj-sigma_e_lim}
If $\,T_{H_n}\!\gsr\! T$ and the subspaces $H_n=\ran (P_n) \subset \dom (T)$, $n\in\N$, are invariant for $T$, then 
\[
\sigma_e\left((T_{H_n})_{n\in\N}\right) = \sigma_e(T).
\]
Here the inclusion `$\supset$' follows from the first assumption, see Remark \ref{sigma_e_lim}, while the inclusion `$\subset$' follows from Definition \ref{defresconvandsigmae}~iii) since in this case $\dom(T_n) = H_n \subset \dom(T)$ and $T_n = P_{H_n} T|_{H_n} = T|_{H_n}$, $n\in\N$.
\end{rem}

The following result, together with its more detailed versions Theorem~\ref{thmdescloux} and Theorem~\ref{thmdescloux2}, constitutes one of the key advances of this paper.
It shows that $W_{\!e}(T)$ is the smallest possible set that captures spectral pollution for projection methods.

The proof is split in two steps and shows even more. Given an arbitrary $\lm\!\in\! W_{\!e}(T)$ and 
finite-dimensional subspaces $V_n$, we construct subspaces $H_n \!=\! \widetilde V_n \!\oplus\! {\rm span}\, \{e_n\}$ with 
$\widetilde V_n$ close to $V_n$ so that $\lm$ is a spurious eigenvalue for the pro\-jection method onto $H_n$;
if $W(T)\!\neq\!\C$ or $\overline{\dom(T)\cap\dom(T^*)}\!=\!H$, we can even choose~$\widetilde V_n \!=\! V_n$.

\begin{theorem}\label{thmsummary}
Assume that $\overline{\dom(T)}=H$. 
Then, for any $\lm\in W_{\!e}(T)$ there exists a sequence of finite-dimensional subspaces $H_n\subset\dom(T)$, $n\in\N$, such that 
$$P_{H_n}\slong I,\quad {\rm dist}(\lm,\sigma(T_{H_n}))\tolong 0,\quad n\to\infty,$$
and hence, for this projection method, $\lm\in W_{\!e}(T)\backslash\sigma(T)$ is a spurious eigenvalue. 
\end{theorem}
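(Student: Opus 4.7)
The plan is to force $\lambda$ to be an (approximate) eigenvalue of the compression $T_{H_n}$ by selecting a unit vector $e_n\in H_n\cap\dom(T)$ such that $(T-\lambda)e_n$ is (almost) orthogonal to $H_n$; then $T_{H_n}e_n=(\lambda+\epsilon_n)e_n$ with $\epsilon_n\to 0$, whence $\dist(\lambda,\sigma(T_{H_n}))\to 0$. To secure $P_{H_n}\stackrel{s}{\to}I$ at the same time, I would fix in advance a countable family $(\phi_k)_{k\in\N}\subset\dom(T)$ with dense linear span (available by density of $\dom(T)$) and set $V_n:={\rm span}\{\phi_1,\ldots,\phi_n\}$, then build $H_n=\widetilde V_n\oplus{\rm span}\{e_n\}$ with $\widetilde V_n$ close to $V_n$ in projection norm.

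In the clean situation $\overline{\dom(T)\cap\dom(T^*)}=H$ I would additionally choose the $\phi_k$ to be orthonormal and to lie inside $\dom(T)\cap\dom(T^*)$, and keep $\widetilde V_n=V_n$. By Theorem~\ref{thmequivdefforWe} one has $W_{\!e}(T)=W_{e1}(T)$, and applying this characterization to the finite-dimensional subspace
\begin{equation*}
U_n:=V_n+(T^*-\overline{\lambda})V_n\in\mathcal V
\end{equation*}
yields $e_n\in U_n^{\perp}\cap\dom(T)$ with $\|e_n\|=1$ and $|\langle Te_n,e_n\rangle-\lambda|<1/n$. Since $(T^*-\overline{\lambda})\phi_j\in U_n$ for each $j\leq n$,
\begin{equation*}
\langle(T-\lambda)e_n,\phi_j\rangle=\langle e_n,(T^*-\overline{\lambda})\phi_j\rangle=0,
\end{equation*}
so that with $H_n:=V_n\oplus{\rm span}\{e_n\}$ one obtains $P_{H_n}(T-\lambda)e_n=\langle(T-\lambda)e_n,e_n\rangle e_n$ and therefore $T_{H_n}e_n=(\lambda+\epsilon_n)e_n$ with $|\epsilon_n|<1/n$; hence $\lambda+\epsilon_n\in\sigma(T_{H_n})$ and $\dist(\lambda,\sigma(T_{H_n}))\to 0$. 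The convergence $P_{H_n}\stackrel{s}{\to}I$ follows from $V_n\subset H_n$ and $P_{V_n}\stackrel{s}{\to}I$.

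The remaining cases require weakening or replacing the $T^*$-based argument. When $W(T)\neq\C$ but $\dom(T)\cap\dom(T^*)$ is not dense, $T$ is still closable by \cite[Thm.~V.3.4]{Kato} and $W_{\!e}(T)=W_{e1}(T)$; here I would mimic the biorthogonal construction in the proof of Claim~1 of Theorem~\ref{thmequivdefforWe}, replacing $P_{V_n}$ by a non-orthogonal finite-rank projection onto $V_n$ whose adjoint range lies in $\dom(T)$, which is enough to run the argument above with $\widetilde V_n=V_n$. The genuinely hard case is $W(T)=\C$ with $\overline{\dom(T)\cap\dom(T^*)}\neq H$, where $W_{e1}(T)$ may be a proper subset of $W_{\!e}(T)=\C$ (see Example~\ref{exequivWe}) and no continuous linear functional on $H$ can encode the orthogonality of $Te_n$ to the $\phi_k$. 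In that case I would invoke the inductive construction from the proof of Claim~2 of Theorem~\ref{thmequivdefforWe}, driven by Lemma~\ref{newlem}~ii) and the density of $N\cap\dom(T)$ in every finite-codimensional $N\subset H$ \cite[Lemma~2.1]{MR0113146}, to simultaneously choose vectors $\widetilde\phi_1,\ldots,\widetilde\phi_n$ close to $\phi_1,\ldots,\phi_n$ (defining $\widetilde V_n$) and a unit vector $e_n\in\dom(T)$ so that $\langle e_n,\widetilde\phi_j\rangle=0$, $\langle Te_n,\widetilde\phi_j\rangle=0$ and $\langle Te_n,e_n\rangle=\lambda$ hold exactly; then $\lambda$ is a true eigenvalue of $T_{H_n}$ with eigenvector $e_n$.

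The main obstacle is precisely this joint induction in the residual case: the orthogonality of $Te_n$ to the basis vectors of $\widetilde V_n$ is not a closed linear condition on $e_n$ in the $H$-topology and cannot be rewritten via the adjoint, so $(\widetilde V_n)$ and $(e_n)$ must be selected in an interleaved fashion rather than in succession. This is exactly why $\widetilde V_n=V_n$ can no longer be guaranteed here, and it is the reason the full proof is distributed over the more elaborate Theorems~\ref{thmdescloux}--\ref{thmdescloux2} rather than being carried out in one stroke.
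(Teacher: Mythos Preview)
Your overall architecture matches the paper's: split into the case $\lambda\in W_{e1}(T)$ (handled with $\widetilde V_n=V_n$, cf.\ Theorem~\ref{thmdescloux} and Lemma~\ref{lemmadescloux}) and the residual case $W(T)=\C$, $W_{e1}(T)\subsetneq W_{\!e}(T)$ (handled by perturbing $V_n$ to $\widetilde V_n$, cf.\ Theorem~\ref{thmdescloux2}); your final paragraph even says so. However, the execution of the first part takes an unnecessary detour and, as written, does not cover your ``case~2''.

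In your clean case you place $e_n$ orthogonal to $U_n=V_n+(T^*-\overline\lambda)V_n$, which forces $P_{V_n}Te_n=0$ and makes the compression \emph{lower}-triangular. This genuinely needs $\phi_j\in\dom(T^*)$, and your proposed fix for $W(T)\neq\C$ via a non-orthogonal projection with $\ran(P^*)\subset\dom(T)$ does not repair this: the identity $\langle(T-\lambda)e_n,\phi_j\rangle=\langle e_n,(T^*-\overline\lambda)\phi_j\rangle$ still requires $\phi_j\in\dom(T^*)$, regardless of what projection you use afterwards. The paper avoids the adjoint altogether by the dual choice $U_n={\rm span}\big(V_n\cup\ran(T|_{V_n})\big)$ (Lemma~\ref{lemmadescloux}), which makes sense as soon as $V_n\subset\dom(T)$. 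Then $e_n\in U_n^\perp\cap\dom(T)$ with $|\langle Te_n,e_n\rangle-\lambda|<1/n$ gives $\langle T\phi_j,e_n\rangle=0$ and hence the \emph{upper}-triangular form
\[
T_{H_n}=\begin{pmatrix}T_{V_n} & *\\ 0 & \lambda+\epsilon_n\end{pmatrix}\quad\text{in}\quad H_n=V_n\oplus{\rm span}\{e_n\},
\]
so $\lambda+\epsilon_n\in\sigma(T_{H_n})$. This covers all $\lambda\in W_{e1}(T)$ in one stroke, with no hypothesis on $\dom(T^*)$; your separate treatment of $W(T)\neq\C$ becomes superfluous.

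For the residual case your diagnosis is correct. Two points of detail: the paper drives the induction with Lemma~\ref{newlem}~i) (the ``all but finitely many $t$'' version), not~ii), applied successively in subspaces of finite codimension; and before that induction can start one needs two preparatory moves you do not mention, namely passing to a nearby $W_n\subset\dom(T^2)$ with $W_n\cap TW_n=\{0\}$ (possible since $\rho(T)\neq\emptyset$ may be assumed), and then perturbing to $W_n(t)$ so that the linear-independence hypotheses of Lemma~\ref{newlem}~i) are met at every step. Only then does one obtain $e_n\perp\widetilde V_n\cup T\widetilde V_n$ with $\langle Te_n,e_n\rangle=\lambda$ exactly and the upper-triangular representation~\eqref{eq:rep}.
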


In the first step of the proof of Theorem \ref{thmsummary} we show 
that arbitrary compact subsets of $W_{e1}(T)$ can be filled with spurious eigenvalues.
Since $W_{e1}(T)=W_{\!e}(T)$ if $W(T)\neq\C$ or $\overline{\dom(T)\cap\dom(T^*)}=H$, see Theorem~\ref{thmequivdefforWe}, 
this completes the proof of Theorem \ref{thmsummary} in this case.

\begin{theorem}\label{thmdescloux}
Assume that $\overline{\dom(T)}=H$. Let $V_n\subset \dom(T)$, $n\in\N,$ be  finite-dimensional subspaces such that $P_{V_n}\s I$.
Then, for any compact $\Omega\subset W_{e1}(T)$, there exist finite-dimensional subspaces $H_n\subset \dom(T)$, $n\in\N$,   
with $V_n\subset H_n$ and with the following properties:
$$P_{H_n}\s I, \quad \sup_{\lm\in\Omega}\,{\rm dist}(\lm,\sigma(T_{H_n}))\tolong 0, \quad n\to\infty,$$
and if $\Omega\subset {\rm int}\, W_{e1}(T)$ is a finite set, then $\sigma(T_{H_n})=\sigma(T_{V_n})\cup\Omega$.
If, in addition,
\begin{enumerate}[label=\rm{(\alph{*})}] 
\item $W(T)\neq\C$ or  $\dom(T)\subset \dom(T^*)$;
\item $\,T_{V_n}\!\!\gsr\! T$ with corresponding 
$\lambda_0\!\notin\! \overline{W(T)}$ if $\,W(T)\!\neq\!\C$ and $\lambda_0\!\notin\!\Omega$ otherwise,
\end{enumerate}
then the subspaces $H_n\subset \dom(T),\,n\in\N,$ can be constructed so that
$\,T_{H_n}\!\gsr\! T$. 
\end{theorem}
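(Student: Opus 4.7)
The strategy is to enlarge each $V_n$ by a finite-dimensional subspace $W_n\subset V_n^{\perp}\cap\dom(T)$ whose basis is constructed so that the compression $T_{H_n}$ on $H_n:=V_n\oplus W_n$ has a block lower-triangular form with spectrum $\sigma(T_{V_n})\cup\sigma(T_{W_n})$, and so that $\sigma(T_{W_n})$ approximates (or equals) $\Omega$. Since $\Omega$ is compact I first choose, for each $n$, a finite $\varepsilon_n$-net $\Omega_n=\{\lambda_1^{(n)},\dots,\lambda_{k_n}^{(n)}\}\subset \Omega$ with $\varepsilon_n\to 0$; in the finite-interior case I take $\Omega_n=\Omega$ for all $n$. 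Since $V_n\subset H_n$, $\|x-P_{H_n}x\|\leq \|x-P_{V_n}x\|$, so $P_{H_n}\stackrel{s}{\to}I$ is automatic from $P_{V_n}\stackrel{s}{\to}I$.

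For fixed $n$, I would inductively construct orthonormal $e_1^{(n)},\dots,e_{k_n}^{(n)}\in\dom(T)$ such that, at step $j$, $e_j^{(n)}\in X_{j,n}^{\perp}\cap\dom(T)$ with
\[
X_{j,n}:=V_n+TV_n+\operatorname{span}\{e_i^{(n)},\,Te_i^{(n)}:i<j\},
\]
and $|\langle Te_j^{(n)},e_j^{(n)}\rangle-\lambda_j^{(n)}|<\varepsilon_n$ (equality in the interior case). Since $\lambda_j^{(n)}\in W_{e1}(T)\subset \overline{W(T|_{X_{j,n}^{\perp}\cap\dom(T)})}$ by the very definition of $W_{e1}$ (here $X_{j,n}$ is finite-dimensional), such $e_j^{(n)}$ exists. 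For $\lambda_j^{(n)}\in\operatorname{int}\,W_{e1}(T)$, the identity $\operatorname{int}\,\overline{C}=\operatorname{int}\,C$ for convex $C$ places $\lambda_j^{(n)}$ in the interior of $W(T|_{X_{j,n}^{\perp}\cap\dom(T)})$ itself, so exact equality is attainable. Setting $W_n:=\operatorname{span}\{e_j^{(n)}\}$ and $H_n:=V_n\oplus W_n$, the block representation of $T_{H_n}$ in the basis $V_n\oplus W_n$ has vanishing lower-left block (as each $e_j^{(n)}\perp TV_n$) and upper-triangular lower-right block with diagonal entries $\langle Te_j^{(n)},e_j^{(n)}\rangle$ (as each $e_j^{(n)}\perp Te_i^{(n)}$ for $i<j$); hence
\[
\sigma(T_{H_n})=\sigma(T_{V_n})\cup\{\langle Te_j^{(n)},e_j^{(n)}\rangle:1\leq j\leq k_n\},
\]
which yields $\sup_{\lambda\in\Omega}\operatorname{dist}(\lambda,\sigma(T_{H_n}))\to 0$ and, in the interior finite case, the equality $\sigma(T_{H_n})=\sigma(T_{V_n})\cup\Omega$.

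The generalized strong resolvent convergence $T_{H_n}\gsr T$ under \textrm{(a)} and \textrm{(b)} is the main obstacle. Under \textrm{(a)} Theorem~\ref{thmequivdefforWe} yields $W_{e1}(T)=W_{\!e}(T)$, so the construction remains available. If $\dom(T)\subset\dom(T^*)$, I impose the additional finite-codimension constraint $e_j^{(n)}\perp T^*V_n$; since $\langle Te_j^{(n)},v\rangle=\langle e_j^{(n)},T^*v\rangle=0$ for $v\in V_n$, the upper-right block of $T_{H_n}$ vanishes as well, so $T_{H_n}$ is block-diagonal $T_{V_n}\oplus T_{W_n}$. Then $(T_{H_n}-\lambda_0)^{-1}P_{H_n}$ splits into $(T_{V_n}-\lambda_0)^{-1}P_{V_n}\stackrel{s}{\to}(T-\lambda_0)^{-1}$, which is \textrm{(b)}, plus $(T_{W_n}-\lambda_0)^{-1}P_{W_n}\stackrel{s}{\to}0$, which follows from $P_{W_n}x\to 0$ (a consequence of $e_j^{(n)}\perp V_n$ and $P_{V_n}\stackrel{s}{\to}I$) provided $k_n$ grows slowly enough — a diagonal argument against a countable dense set in $H$. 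In the remaining case $W(T)\neq\C$, I can no longer force $B_n=0$, but the key observation is that $\lambda_0\notin\overline{W(T)}$, together with $W(T_{V_n}),W(D_n)\subset W(T_{H_n})\subset W(T)$, gives the uniform bounds $\|(T_{V_n}-\lambda_0)^{-1}\|,\|(D_n-\lambda_0)^{-1}\|\leq 1/\operatorname{dist}(\lambda_0,W(T))$; this reduces the problem to controlling the coupling block $B_n$, which is the principal technical difficulty and requires a careful refinement of the inductive choice of $e_j^{(n)}$ balancing $k_n$ against $\varepsilon_n$ and the approximation rate of $P_{V_n}\stackrel{s}{\to}I$.
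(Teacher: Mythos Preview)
Your construction of $H_n$ via the inductive orthogonality constraints is essentially identical to the paper's (which packages the inductive step as Lemma~\ref{lemmadescloux}), and the spectral conclusions follow in the same way. The gaps are in the generalized strong resolvent part.

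In the case $W(T)\neq\C$, what you call ``the principal technical difficulty'' --- controlling the off-diagonal block $B_n$ --- is not a difficulty at all, and the paper's argument avoids it entirely. The point you are missing is that the numerical-range bound applies to the \emph{full} compression: since $H_n\subset\dom(T)$ one has $W(T_{H_n})\subset W(T)$, hence $\|(T_{H_n}-\lambda_0)^{-1}\|\leq 1/\operatorname{dist}(\lambda_0,W(T))$ uniformly in $n$. Now the upper-triangular structure (with $V_n$ as the first block) gives $(T_{H_n}-\lambda_0)^{-1}|_{V_n}=(T_{V_n}-\lambda_0)^{-1}$, so
\[
(T_{H_n}-\lambda_0)^{-1}P_{H_n}=(T_{V_n}-\lambda_0)^{-1}P_{V_n}+(T_{H_n}-\lambda_0)^{-1}(P_{H_n}-P_{V_n}).
\]
The first term converges strongly to $(T-\lambda_0)^{-1}$ by (b), and the second tends strongly to $0$ because $P_{H_n}-P_{V_n}=P_{W_n}$ with $W_n\subset V_n^\perp$, whence $\|P_{W_n}x\|\leq\|(I-P_{V_n})x\|\to 0$, together with the uniform resolvent bound. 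No refinement of the inductive choice and no control on $B_n$ is needed.

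In the case $\dom(T)\subset\dom(T^*)$ your argument is incomplete for a different reason. You impose $e_j^{(n)}\perp T^*V_n$ to kill the upper-right block, but you do \emph{not} impose $e_j^{(n)}\perp T^*e_i^{(n)}$ for $i<j$; hence $T_{W_n}$ is merely upper triangular, not diagonal, and for non-normal triangular matrices the resolvent norm $\|(T_{W_n}-\lambda_0)^{-1}\|$ is not controlled by the distance of $\lambda_0$ to the diagonal entries. Your proposed ``diagonal argument with $k_n$ growing slowly'' cannot repair this: without an a~priori bound on $\|(T_{W_n}-\lambda_0)^{-1}\|$ you cannot conclude $(T_{W_n}-\lambda_0)^{-1}P_{W_n}x\to 0$ from $P_{W_n}x\to 0$. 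The fix is simply to add the finitely many constraints $e_j^{(n)}\perp T^*e_i^{(n)}$, $i<j$, at each inductive step (this is still a finite-dimensional obstruction, so the $W_{e1}$ definition still applies); then $T_{W_n}=\operatorname{diag}(\langle Te_j^{(n)},e_j^{(n)}\rangle)$ and $\|(T_{W_n}-\lambda_0)^{-1}\|\leq 2/\operatorname{dist}(\lambda_0,\Omega)$ for large $n$, which is exactly what the paper does.
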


\begin{rem}\label{remdescloux}
Theorem~\ref{thmdescloux} contains various earlier results as special cases:
\begin{enumerate}
\item[\rm i)]
For bounded operators, assumption (a) holds automatically and assumption (b) is satisfied for any sequence of subspaces $(V_n)_{n\in\N}$ with 
$P_{V_n}\!\!\s\! I$, and hence~\cite[Theorem~3]{descloux}, \cite{pokrzywa} are contained in Theorem \ref{thmdescloux}.

\item[\rm ii)] For selfadjoint operators, conv $\sigma_e(T) = W_{\!e}(T)=W_{e1}(T)$ by Theorem~\ref{thm.selfadjoint}, 
and thus~\cite[Theorem~2.1]{levitin}, \cite[Theorem~1.1]{lewin-sere} are contained in Theorem~\ref{thmdescloux}.
\end{enumerate}
\end{rem}

For the proof of Theorem~\ref{thmdescloux} we need the following lemma.

\begin{lemma}\label{lemmadescloux}
Let $V\subset \dom(T)$ be a finite-dimensional subspace.
Then, for given $\lm\!\in\!W_{e1}(T)$ and $\eps>0$, there exist $x\in V^{\perp}\!\cap\dom(T)$, $\|x\|\!=\!1$, and $\mu\!\in\! B_{\eps}(\lm)$ such~that 
\begin{equation}\label{eq:lemmanew}
T_{V_x}=\begin{pmatrix}T_V & A\\ 0 & \mu I\end{pmatrix} \quad\text{in}\quad V_x:=V\oplus{\rm span}\{x\}
\end{equation}
with a linear operator $A:{\rm span}\{x\}\to V$ and therefore $\sigma(T_{V_x})=\sigma(T_V)\cup\{\mu\}$.
Moreover, we can choose $A=0$ if $\,\overline{\dom(T)}=H$ and $\dom(T)\subset \dom(T^*)$,
and we can choose $\mu=\lm$ if $\lm\in {\rm int}\,W_{e1}(T)$.
\end{lemma}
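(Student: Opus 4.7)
The plan is to write $T_{V_x}$ in block form with respect to the orthogonal decomposition $V_x = V \oplus \operatorname{span}\{x\}$ (orthogonal because $x \in V^{\perp}$) and read off which orthogonality constraints on $x$ force the lower-left block to vanish. For $v \in V$ and $t \in \C$ a direct computation gives
\[
T_{V_x}(v+tx) \;=\; T_V v + t\,P_V Tx \;+\; \bigl(\langle Tv,x\rangle + t\langle Tx,x\rangle\bigr)x,
\]
so that, in the basis $(V,\operatorname{span}\{x\})$,
\[
T_{V_x} \;=\; \begin{pmatrix} T_V & P_V T|_{\operatorname{span}\{x\}} \\[1mm] \langle T\,\cdot\,,x\rangle & \langle Tx,x\rangle \end{pmatrix}.
\]
Hence the structure \eqref{eq:lemmanew} is equivalent to $\langle Tv,x\rangle = 0$ for every $v\in V$, i.e.\ $x\perp TV$, together with $\mu = \langle Tx,x\rangle$.

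For the main statement I would choose the finite-dimensional subspace $W := V + TV$. Since $\lambda \in W_{e1}(T)$, the definition of $W_{e1}$ yields $\lambda \in \overline{W(T|_{W^{\perp}\cap\dom(T)})}$, so there exists $x\in W^{\perp}\cap\dom(T)$ with $\|x\|=1$ and $|\langle Tx,x\rangle - \lambda| < \epsilon$. Setting $\mu := \langle Tx,x\rangle \in B_\eps(\lm)$, the conditions $x\in V^{\perp}$ and $x\perp TV$ are both ensured, so $T_{V_x}$ has the upper triangular block form \eqref{eq:lemmanew} with $A := P_V T|_{\operatorname{span}\{x\}}$. The spectral identity $\sigma(T_{V_x})=\sigma(T_V)\cup\{\mu\}$ is then immediate from the triangular block structure in the finite-dimensional space $V_x$.

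For the two refinements, one simply enlarges the finite-dimensional ``companion'' subspace and/or exploits convexity. If $\overline{\dom(T)}=H$ and $\dom(T)\subset \dom(T^*)$, then $V\subset \dom(T^*)$ and $T^* V$ is a finite-dimensional subspace of $H$; replacing $W$ by $W:=V+TV+T^*V$, the same argument produces an $x\in W^{\perp}\cap\dom(T)$, and the extra condition $x\perp T^*V$ yields $\langle Tx,v\rangle = \langle x,T^*v\rangle = 0$ for all $v\in V$, whence $A = P_V Tx = 0$. If $\lambda\in {\rm int}\,W_{e1}(T)$, then the inclusion $W_{e1}(T)\subset \overline{W(T|_{W^{\perp}\cap\dom(T)})}$ forces $\lambda\in {\rm int}\,\overline{W(T|_{W^{\perp}\cap\dom(T)})}$, which by convexity of the numerical range equals ${\rm int}\,W(T|_{W^{\perp}\cap\dom(T)})\subset W(T|_{W^{\perp}\cap\dom(T)})$; consequently $\lambda$ is attained exactly, so we may take $\mu=\lambda$. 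The main (modest) obstacle is simply recognising that the triangular block constraint $\langle Tv,x\rangle=0$ for $v\in V$ can be absorbed into the finite-codimensional restriction implicit in the $W_{e1}$-characterisation by enlarging $V$ to $V+TV$ (and further to $V+TV+T^*V$ for the $A=0$ case); once this observation is made, everything else is routine.
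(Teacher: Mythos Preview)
Your proof is correct and follows essentially the same approach as the paper: choose the finite-dimensional auxiliary subspace $W=V+TV$ (the paper writes $U=\operatorname{span}(V\cup\ran(T|_V))$), use the $W_{e1}$-definition to pick $x\in W^{\perp}\cap\dom(T)$ with $\langle Tx,x\rangle$ close to $\lambda$, enlarge $W$ by $T^*V$ for the $A=0$ refinement, and use the convexity of the numerical range for the $\mu=\lambda$ refinement. Your block-matrix computation is slightly more explicit than the paper's, but the argument is the same.
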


\begin{proof}
Let $\lm\in W_{e1}(T)$ and $\eps>0$ be fixed. 
By definition, see Theorem \ref{thmequivdefforWe}, $W_{e1}(T)$ is the intersection of $\overline{W(T|_{U^{\perp}\cap\dom(T)})}$ over all finite-dimensional subspaces $U\subset H$.
Hence if we choose
$$U:={\rm span}\big(V\cup \ran(T|_V)\big),$$
there exists $\mu\in B_{\eps}(\lm)$ such that 
\beq\label{eq.xin.Uperp}
\exists\,x\in U^{\perp}\cap\dom(T)\subset V^{\perp}\cap\dom(T), \, \|x\|=1: \quad \langle Tx,x\rangle=\mu.\eeq
Since $x\in U^{\perp}$, we have
$$\langle Tv,x\rangle=0 \quad v\in V,$$
which implies the representation~\eqref{eq:lemmanew}. 
Clearly, the matrix representation of $T_{V_x}$ yields that $\sigma(T_{V_x})=\sigma(T_V)\cup\{\mu\}$.

If $\,\overline{\dom(T)}=H$ and $\dom(T)\subset \dom(T^*)$, we can even choose $U:={\rm span}\big(V\cup \ran(T|_V)\cup \ran(T^*|_V)\big)$. Then \eqref{eq.xin.Uperp} yields that 
$$\langle Tv,x\rangle=0, \quad \langle Tx,v\rangle=\langle x,T^*v\rangle=0, \quad v\in V,$$
which implies the representation~\eqref{eq:lemmanew} with $A=0$.

If $\lm\!\in\! {\rm int}\,W_{e1}(T)$, there exists $\delta\!>\!0$ with $B_{\delta}(\lm)\!\subset\!W_{e1}(T)$ and hence
$B_{\delta}(\lm)\subset\overline{W(T|_{U^{\perp}\cap\dom(T)})}$. This
implies $\lm\in W(T|_{U^{\perp}\cap\dom(T)})$ and so~\eqref{eq.xin.Uperp} holds for~$\mu=\lm$.
\end{proof}

\begin{proof}[Proof of Theorem~{\rm\ref{thmdescloux}}]
Let $n\in\N$. There exists a finite open covering  
$\{D_{k;n}:k=1,\dots,N_n\}$ of $\Omega$ by open disks of radius $1/n$.
By applying Lemma~\ref{lemmadescloux} inductively $N_n$ times with $\eps:=1/n$, we construct orthonormal elements
$x_{1;n},\dots,x_{N_n;n}\in V_n^{\bot}\cap\dom(T)$ 
and points $\mu_{k;n}\in D_{1;n},\dots,\mu_{N_n;n}\in D_{N_n;n}$ such that 
\beq \label{eq:Hn}
H_n:=V_n\oplus {\rm span}\{x_{1;n}\}\oplus\dots\oplus {\rm span}\{x_{N_n;n}\}
\eeq 
satisfies $P_{H_n}\s I$ and
$$\sigma(T_{H_n})=\sigma(T_{V_n})\cup\{\mu_{1;n},\dots,\mu_{N_n;n}\}.$$

If $\Omega\subset {\rm int}\, W_{e1}(T)$ is a finite set, 
then we choose the covering so that the centre of each $D_{k;n}$ is a point in $\Omega$ and so $\sigma(T_{H_n})=\sigma(T_{V_n})\cup\Omega$.
For a general compact subset $\Omega\subset W_{e1}(T)$, by construction of the disks $D_{k;n},\,k=1,\dots,N_n$, we have 
\beq \label{distOmega}
\sup_{\lm\in\Omega}\,\dist(\lm,\sigma(T_{H_n}))\leq \frac{2}{n}\tolong 0, \quad n\to\infty.
\eeq

Now we show  $T_{H_n}\!\gsr\!T$  if assumptions (a) and (b) hold.
First we consider the case $W(T)\!\neq\!\C$ in (a) where $\lambda_0 \!\in\!\bigcap_{n\in\N}\rho(T_{V_n})\!\cap\!\rho(T)$ in (b) satisfies $\lambda_0\!\notin\!\overline{W(T)}$. 
Then, since $\sigma(T_{H_n})\subset W(T_{H_n})\subset W(T)$, 
we have $\lambda_0 \in \rho(T_{H_n})$ and
\beq \label{eq:urb}
\|(T_{H_n}-\lm_0)^{-1}\|\leq \frac{1}{{\rm dist}(\lm_0,W(T_{H_n}))}\leq  \frac{1}{{\rm dist}(\lm_0,W(T))},\quad n\in\N.
\eeq
Lemma~\ref{lemmadescloux} yields that the matrix representation of $T_{H_n}$ in $H_n$
given by \eqref{eq:Hn}
is upper triangular. Now assumption (b) implies that 
\begin{equation}\label{eq:gsr1}
(T_{H_n}-\lm_0)^{-1}P_{V_n}=(T_{V_n}-\lm_0)^{-1}P_{V_n}\stackrel{s}{\tolong} (T-\lm_0)^{-1},\quad n\to\infty.
\end{equation}
In addition, the uniform bound for the resolvents in \eqref{eq:urb} and $P_{H_n}\stackrel{s}{\to} I$ show that
\begin{equation}\label{eq:gsr2}
(T_{H_n}-\lm_0)^{-1}(P_{H_n}-P_{V_n})=(T_{H_n}-\lm_0)^{-1}P_{V_n}(P_{H_n}-I)\stackrel{s}{\tolong}0,\quad n\to\infty.
\end{equation}
Now~\eqref{eq:gsr1} and~\eqref{eq:gsr2} yield $T_{H_n}\!\gsr\!T$.

It remains to consider the case $\dom(T)\!\subset\! \dom(T^*)$ in (a) where $\lambda_0 \!\in\! \bigcap_{n\in\N}\rho(T_{V_n}\!)\cap\rho(T)$ in (b) satisfies $\lambda_0\!\notin\! \Omega$.
Then Lemma~\ref{lemmadescloux} implies that the representation of $T_{H_n}$ in $H_n$ given by \eqref{eq:Hn}
is block-diagonal. Hence
\begin{align*}
(T_{H_n}-\lm_0)^{-1}P_{H_n}
&=(T_{V_n}-\lm_0)^{-1}P_{V_n}+\sum_{k=1}^{N_n}(\mu_{k;n}-\lm_0)^{-1}P_{{\rm span}\{x_{k;n}\}}
.
\end{align*}
Since $\lm_0$ satisfies $(T_{V_n}-\lm_0)^{-1}P_{V_n}\s (T-\lm_0)^{-1}$, it suffices to show that
$$\sum_{k=1}^{N_n}(\mu_{k;n}-\lm_0)^{-1}P_{{\rm span}\{x_{k;n}\}}\slong 0, \quad n\to\infty.$$
Since $\lm_0\notin\Omega$ by assumption, we have ${\rm dist}(\lm_0,\Omega)>0$.
By \eqref{distOmega}, the eigenvalues $\mu_{k;n}\in\sigma(T_{H_n})$, $k=1,\dots,N_n$, lie in the $2/n$-neighbourhood of $\Omega$. 
If we choose $n\in\N$ so large that $2/n<{\rm dist}(\lm_0,\Omega)/2$,
then $|\mu_{k;n}-\lm_0|\geq {\rm dist}(\lm_0,\Omega)/2$. Hence, for every $x\in H$,
\begin{align*}
\bigg\|\sum_{k=1}^{N_n}(\mu_{k;n}\!-\!\lm_0)^{-1}P_{{\rm span}\{x_{k;n}\}}x\bigg\|^2\!\!
&= \sum_{k=1}^{N_n}|\mu_{k;n}\!-\!\lm_0|^{-2}\big\|P_{{\rm span}\{x_{k;n}\}}x\big\|^2\\
&\leq \frac{4}{{\rm dist}(\lm_0,\Omega)^2}\,\big\|P_{{\rm span}\{x_{1;n},\dots,x_{N_n;n}\}}x\big\|^2\\
&\leq \frac{4}{{\rm dist}(\lm_0,\Omega)^2}\,\big\|(I\!-\!P_{V_n})x\big\|^2\!\tolong 0, \quad n\to\infty. \hspace{2mm}
\qedhere
\end{align*}
\end{proof}

\vspace{2mm}

The next theorem is the second step in the proof of Theorem \ref{thmsummary}. It 
shows that, if $W_{e1}(T)\subsetneq W_{\!e}(T)$, it is even possible to produce spectral pollution in $W_{\!e}(T)\backslash W_{e1}(T)$ 
if we allow for a modification of the given subspaces $V_n$.  


\begin{theorem}\label{thmdescloux2}
Assume that $\overline{\dom(T)}=H$. 
Let $V_n\subset \dom(T)$, $n\in\N$, be  finite-dim\-ensional subspaces such that $P_{V_n}\s I$ and
let $\eps_n>0$, $n\in\N$, with $\eps_n\to 0$ as $n\to\infty$. 
Then, for any $\lm\in W_{\!e}(T)\backslash\sigma(T)$ and every $n\in\N$, there exist a finite-dimensional subspace $\widetilde V_n\!\subset\!\dom(T)$  and 
$e_n\!\in\! \widetilde V_n^{\perp}\!\cap\dom(T)$, $\|e_n\|=1$,~with 
\begin{equation}\label{eq:gap}
  \| P_{\widetilde V_n} - P_{V_n} \|<\eps_n
\end{equation}
and such that 
\begin{equation}\label{eq:rep}
T_{H_n}=\begin{pmatrix}T_{\widetilde V_n} & B_n \\ 0 & \lm I\end{pmatrix}\quad \text{in}\quad H_n 
:=\widetilde V_n\oplus{\rm span}\{e_n\}
\end{equation}
for some linear operator $B_n: \widetilde V_n\to{\rm span}\{e_n\}$. Hence
$$
P_{H_n}\slong I, \quad 
\sigma(T_{H_n})=\sigma(T_{\widetilde V_n})\cup\{\lm\},\quad n\in\N,
$$
so if $\lm\!\in\! W_{\!e}(T)\setminus\sigma(T)$, then $\lambda$ is a spurious eigenvalue for $(T_{H_n})_{n\in\N}$.
\end{theorem}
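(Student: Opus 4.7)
The plan is to reduce the required block form \eqref{eq:rep} to two conditions on $e_n\in\widetilde V_n^\perp\cap\dom(T)$ with $\|e_n\|=1$: the $(2,1)$ block vanishes iff $e_n\perp T(\widetilde V_n)$, and the $(2,2)$ block equals $\lambda I$ iff $\langle Te_n,e_n\rangle = \lambda$ exactly. I then split by whether $\lambda\in {\rm int}\, W_{e1}(T)$.

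If $\lambda \in {\rm int}\, W_{e1}(T)$, I take $\widetilde V_n := V_n$ and apply Lemma \ref{lemmadescloux} with $V = V_n$: the $\mu = \lambda$ option available for interior points, combined with the fact that the lemma's internal choice $U = V_n \cup T(V_n)$ forces the selected $x$ to satisfy $x \perp T(V_n)$, produces the required $e_n \in V_n^\perp\cap\dom(T)$. The bound \eqref{eq:gap} is trivial since $\widetilde V_n = V_n$.

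Otherwise $\lambda \in W_{\!e}(T)\setminus {\rm int}\, W_{e1}(T)$. The essential new sub-case is $\lambda \in W_{\!e}(T)\setminus W_{e1}(T)$, which by Claim 1) in the proof of Theorem \ref{thmequivdefforWe} forces $W(T) = \C$ (the remaining boundary points of $W_{e1}(T)$ being handled by a limit argument reducing to the interior case). In this case the plan is: pick an orthonormal basis $\{v_1,\dots,v_k\}$ of $V_n$ and inductively construct unit vectors $\widetilde v_1,\dots,\widetilde v_k\in\dom(T)$ by applying Lemma \ref{newlem} ii) at each step in the subspace $\{\widetilde v_1,\dots,\widetilde v_{i-1}\}^\perp\cap\dom(T)$, with $y = v_i$ and sufficiently small tolerance; this maintains $W(T|_{\{\widetilde v_1,\dots,\widetilde v_i\}^\perp \cap \dom(T)}) = \C$ inductively (vacuous at $i=1$ since $W(T)=\C$), and the explicit form $\widetilde v_i = (v_i + t x_i)/\|v_i + t x_i\|$ used in the proof of Lemma \ref{newlem} lets $\|\widetilde v_i - v_i\|$ be made arbitrarily small. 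Choosing the parameters $t$ geometrically small across the $k$ steps yields $\|P_{\widetilde V_n} - P_{V_n}\| < \epsilon_n$ with $\widetilde V_n := {\rm span}\{\widetilde v_1,\dots,\widetilde v_k\}$. A further application of Lemma \ref{newlem} absorbing the finite-dimensional space $T(\widetilde V_n)$ produces $W(T|_{(\widetilde V_n + T(\widetilde V_n))^\perp \cap \dom(T)}) = \C$, and the required unit $e_n$ orthogonal to both $\widetilde V_n$ and $T(\widetilde V_n)$ with $\langle Te_n,e_n\rangle = \lambda$ is then found directly in this subspace.

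The main obstacle is the conflict at each induction step between two opposing demands: the perturbation $\widetilde v_i - v_i$ must be small (to preserve the projection-norm bound \eqref{eq:gap}) yet must carry enough structural weight that Lemma \ref{newlem} ii) applies and restores $W(T|_{\{\widetilde v_1,\dots,\widetilde v_i\}^\perp \cap \dom(T)}) = \C$ in the next restricted subspace. Both are governed by the single free parameter $t$ in the proof of Lemma \ref{newlem}, so careful a priori bookkeeping of tolerances resolves the conflict.
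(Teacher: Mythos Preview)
Your reduction of the block form \eqref{eq:rep} to the two conditions $e_n\perp T(\widetilde V_n)$ and $\langle Te_n,e_n\rangle=\lambda$ is correct, and your treatment of the case $\lambda\in{\rm int}\,W_{e1}(T)$ via Lemma~\ref{lemmadescloux} with $\widetilde V_n=V_n$ matches the paper. The genuine gap is in the $W(T)=\C$ case.

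The step ``a further application of Lemma~\ref{newlem} absorbing the finite-dimensional space $T(\widetilde V_n)$ produces $W(T|_{(\widetilde V_n + T(\widetilde V_n))^\perp \cap \dom(T)}) = \C$'' does not work as written. After your induction has produced $\widetilde V_n$ with $W(T|_{\widetilde V_n^\perp\cap\dom(T)})=\C$, applying Lemma~\ref{newlem} to the compressed operator on $\widetilde V_n^\perp$ and to the projected vectors $P_{\widetilde V_n^\perp}T\widetilde v_j$ only yields the full-numerical-range property for the orthogonal complement of \emph{perturbed} versions of these vectors. But once $\widetilde V_n$ is fixed, $T(\widetilde V_n)$ is fixed too: you cannot perturb $T\widetilde v_j$ independently of $\widetilde v_j$, so Lemma~\ref{newlem} gives no control over whether orthogonality to the \emph{actual} space $T(\widetilde V_n)$ still leaves a subspace on which $W(T|_{\cdot})=\C$. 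Your two-stage plan (first build $\widetilde V_n$, then absorb $T(\widetilde V_n)$) decouples two perturbations that must be coupled.

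The paper resolves this by perturbing $\widetilde V_n$ and $T(\widetilde V_n)$ simultaneously through a single parameter. Using $\rho(T)\neq\emptyset$, it first moves $V_n$ to a nearby $W_n\subset\dom(T^2)$, then chooses auxiliary vectors $x_k\in\dom(T^2)$ satisfying orthogonality conditions to $W_n$, $TW_n$ and to each other and their $T$-images, and sets $w_k(t):=y_k+tx_k$. The point is that $Tw_k(t)=Ty_k+tTx_k$ is \emph{also} linear in the same $t$, so Lemma~\ref{newlem}~i) can be applied $2N_n$ times inductively, alternately to $w_j(t_j)$ and to $Tw_j(s_j)$, each application excluding only finitely many parameter values. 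Specialising all parameters to one small admissible $t$ then gives $W(T|_{(W_n(t)\cup TW_n(t))^\perp\cap\dom(T)})=\C$ while $\|P_{W_n(t)}-P_{V_n}\|<\eps_n$. This simultaneous control, together with the preliminary passage to $\dom(T^2)$ that makes $Tw_k(t)$ a well-defined linear family in $t$, is the missing idea in your outline; the ``careful a priori bookkeeping of tolerances'' you mention addresses a different, easier conflict.
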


\begin{proof}
If $\,W_{\!e}(T)=W_{e1}(T)$, all claims follow from Theorem \ref{thmdescloux} with $\widetilde V_n\!=\!V_n$, $n\in\N$.
If $\,W_{e1}(T)\subsetneq W_{\!e}(T)$, then $W(T)\!=\!\C$ by Theorem~\ref{thmequivdefforWe} and hence $W_{\!e}(T)=W(T)=\C$ by
Corollary~\ref{corlines}~iv).

If $\rho(T)=\emptyset$, there is nothing to prove, so we may assume that $\rho(T)\neq\emptyset$, without loss of generality 
$0\in\rho(T)$. This and $\overline{\dom(T)}=H$ imply that also $\overline{\dom(T^2)}=H$.

Let $\lm\in W_{\!e}(T)=\C$ be arbitrary. Let $n\in\N$ be fixed and let $\delta_n>0$ be arbitrary.
Then there exists $W_n\subset D(T^2)$ with
$$\| P_{W_n} - P_{V_n} \|< \delta_n. $$
Since $W(T)\!=\!\C$, we know that $T$ cannot be a multiple of the identity on~any finite-codimensional subspace. Thus we can choose 
$W_{\!n}$ such that $W_{\!n} \!\cap\! T W_{\!n} \!=\! \{0\}$,~i.e.
\begin{equation}\label{eq:invsub}
\forall\,w\in W_n\backslash\{0\}:\quad Tw\notin W_n.
\end{equation}

Let $\{y_1,\dots,y_{N_n}\}$ be an orthonormal basis of $W_n$. 
By induction over $k\!=\!1,$ $\dots,N_n$ we can construct 
\begin{equation}\label{eq:xk}
x_k\in \big(W_n\cup TW_n\cup \{x_j:\,j<k\}\cup\{Tx_j:\,j<k\}\big)^{\perp}\cap\dom(T^2)
\end{equation}
with $\|x_k\|=1$; by taking an appropriate linear combination of such $x_k$, and using that $T$ is injective, we can also achieve that 
 \begin{equation}\label{eq:Txk}
 Tx_k\in \big(W_n\cup TW_n\cup \{x_j:\,j < k\}\cup\{Tx_j:\,j<k\}\big)^{\perp}.
 \end{equation}
For every $t>0$, we define the pairwise orthogonal elements
$$w_k(t):=y_k+t x_k\in\dom(T^2),\quad k=1,\dots,N_n,$$
and set $W_n(t):={\rm span}\{w_1(t),\dots,w_{N_n}(t)\}$. Note that $\ran(T|_{W_n(t)})\subset\dom(T)$. Further note that the set $\{Tx_j:\,j\leq N_n\}\cup\{Ty_j:\,j\leq N_n\}$ 
is linearly independent due to the injectivity of $T$ and by~\eqref{eq:xk}.

Next we prove, by induction over $k=1,\dots,N_n$ and using Lemma~\ref{newlem}~i), that for all but finitely many $t_1,\dots,t_k,s_1,\dots,s_k\in \C$, 
the set $\{w_j(t_j):j\leq k\}\cup\{Tx_j:\,j\leq k\}\cup\{Ty_j:\,j\leq N_n\}$ is linearly independent and 
\begin{equation}\label{eq:induction}
W(T|_{(\{w_j(t_j):\,j\leq k\}\cup\{Tw_j(s_j):\,j\leq k\})^{\perp}\cap\dom(T)})=\C.
\end{equation}
As in the proof of Theorem~\ref{thmequivdefforWe}, see the proof of Claim~2) therein, we will apply Lemma~\ref{newlem}~i) successively in a sequence of subspaces $X_1 \supset X_2 \supset \dots$ of $H$ of finite codimension to which $T$ is compressed.

Let $k=1$. Since $y_1,x_1\in\dom(T)$ are linearly independent, Lemma~\ref{newlem}~i) in $H$ yields that 
\begin{equation}\label{eq:t1}
W(T|_{w_1(t_1)^{\perp}\cap\dom(T)})\!=\!\C
\end{equation}
for all but finitely many $t_1\!\in\! \C$.
Using \eqref{eq:xk},~\eqref{eq:Txk} and the property that $W_n\cap TW_n = \{0\}$, 
it is not difficult to check that, 
for all but at most one $t_1\in\R$, the set $\{w_1(t_1),Tx_1\}\cup\{Ty_j:\,j\leq N_n\}$ is linearly independent. 
We fix such a~$t_1$ that also satisfies \eqref{eq:t1},
set $X_1:=w_1(t_1)^{\perp}$ and let $P_1:H\to X_1$ be the orthogonal projection in $H$ onto~$X_1$. Then,
since $T$ is injective, it follows that 
$P_1Ty_1$, $P_1Tx_1\in\dom(T)\cap X_1$  
are non-zero and linearly independent. 
Hence Lemma~\ref{newlem}~i) in $X_1$ shows that  $W(T|_{\{w_1(t_1),Tw_1(s_1)\}^{\perp}\cap\dom(T)})=\C$ for all but finitely many $s_1\in \C$.
This proves \eqref{eq:induction} for $k=1$. 

Now assume that 
the induction hypothesis holds for some $k \,\!\in\! \{1,\dots,N_n\!-\!1\}$ and fix some admissible $t_1,\dots,t_k,s_1,\dots,s_k\!\in\! \C$. 
Then \eqref{eq:xk}, \eqref{eq:Txk} and the property that $W_n\cap TW_n = \{0\}$ 
imply that $\{y_{k+1},x_{k+1}\}\cup \{w_j(t_j):j\!\leq\! k\} \cup\{Tw_j(s_j):j\!\leq\! k\}$ is linearly independent.
Set $X_{2k}\!:=(\{w_j(t_j):j\!\leq\! k\} \cup\{Tw_j(s_j):j\!\leq\! k\})^{\perp}\!$
and let $Q_k:H\to X_{2k}$ be the orthogonal projection in $H$ onto $X_{2k}$.
Then $Q_ky_{k+1}$, $Q_kx_{k+1}\in\dom(T)\cap X_{2k}$ are non-zero and linearly independent.  
Now Lemma~\ref{newlem}~i) in $X_{2k}$ yields 
\begin{equation}\label{eq:tk}
W(T|_{(\{w_j(t_j):j=1,\dots,k+1\}\cup\{Tw_j(s_j):j=1,\dots,k\})^{\perp}\cap\dom(T)})=\C
\end{equation}
for all but finitely many $t_{k+1}\in \C$.
By the linear independence induction hypothesis, using \eqref{eq:xk}, \eqref{eq:Txk} and the injectivity of $T$, one can prove that 
for all but at most one $t_{k+1}\!\in\!\C$, the set $\{w_j(t_j):j\!\leq\! k\!+\!1\}\cup\{Tx_j:j\!\leq\! k\!+\!1\}\cup\{Ty_j:j\!\leq\! N_n\}$ is linearly independent.
We fix such a $t_{k+1}$  that also satisfies~\eqref{eq:tk},
and let $P_{k+1}:H\!\to\! X_{2k+1}$ be the orthogonal projection in $H$ onto $X_{2k+1}\!:=(\{w_j(t_j):j\!\leq\! k\!+\!1\}\cup\{Tw_j(s_j):j\!\leq\! k\})^{\perp}$.
Then $P_{k+1}Ty_{k+1}$, $P_{k+1}Tx_{k+1}\in\dom(T) \cap X_{2k+1}$ are non-zero and linearly independent.
Finally, Lemma~\ref{newlem}~i) in $X_{2k+1}$ shows that~\eqref{eq:induction} holds for $k+1$ and for all but finitely many $s_{k+1}\in\C$. This proves the induction step. 

From~\eqref{eq:induction} with $k=N_n$ and letting $t_1=\dots=t_{N_n}=t$ and $s_1=\dots=s_{N_n}=t$, we conclude that, for all but finitely many $t\in \C$,
$$ 
  W(T|_{(W_n(t)\cup \ran(T|_{W_n(t)}))^{\perp}\cap\dom(T)})=\C.
$$
Thus for all but finitely many $t\in\C$, there exists $e(t)\!\in\! (W_n(t)\cup \ran(T|_{W_n(t)}))^{\perp}\cap\dom(T)$, $\|e(t)\|=1$, with 
$\langle Te(t),e(t)\rangle=\lm$. Note that $\langle T w,e(t)\rangle=0$ for every $w\in W_n(t)$. Now we choose $\delta_n$ and $t$ so small that 
\eqref{eq:gap} and \eqref{eq:rep} hold with $\widetilde V_n:=W_n(t)$ and $e_n:=e(t)$. 
From the representation \eqref{eq:rep}, it follows that $\sigma(T_{H_n})=\sigma(T_{\widetilde V_n})\cup\{\lm\}$. 
The property \eqref{eq:gap} and $\eps_n\to 0$ imply $\|P_{V_n}-P_{\widetilde V_n}\|\to 0$.
Together with $P_{V_n}\s I$, this yields $P_{\widetilde V_n}\s I$, and hence $P_{H_n}\s I$ since $\widetilde V_n \subset H_n$. 
\end{proof}

\begin{proof}[Proof of Theorem {\rm \ref{thmsummary}}]
Let $V_n\subset\! \dom(T)$, $n\!\in\!\N,$ be arbitrary finite-dimensional sub\-spaces with $P_{V_n}\!\s\! I$. 
If $W_{e1}(T)\!=\!W_{\!e}(T)$, we apply Theorem~\ref{thmdescloux}; if $W_{e1}(T)\!\subsetneq\! W_{\!e}(T)$ we 
apply Theorem~\ref{thmdescloux2}, to complete the proof of Theorem~\ref{thmsummary}.
\end{proof}

The next example gives an explicit construction of the subspaces $H_n$, $n\in\N$, in Theorem~\ref{thmdescloux} 
so that the corresponding projection method has a given point $\lambda \in W_{e1}(T) \backslash \sigma(T)$ (even 
$\lambda \in W_{\!e}(T) \backslash \sigma(T)$ if $W(T)\ne \C$) as a spurious eigenvalue.

\begin{example}\label{ex.delay}
Let $A:=T+S$ where $T$, $S$ are the neutral delay differential operators introduced in Remark \ref{neutral-delay-diff} with their matrix representations in Example~\ref{Ex2} with respect to ${\rm span}\{\cos(k\cdot),\sin(k\cdot):\,k\in\N\} \subset\dom(A)$.

It is not difficult to check that the spectrum of the lower triangular infinite matrix $A$ is given by its diagonal entries, 
$\sigma(A)=\{k^2:\,k\in\N\}$ and $\sigma_e(A)=\{1\}$. For the latter note that, while for $k\ge 2$ all eigenvalues $k^2$ are simple, $1$ is an eigenvalue of 
infinite geometric multiplicity (with one two-dimensional algebraic eigenspace), and $\sigma_e(A) \subset 
\bigcap_{K\in L(H), \atop K\,\text{compact}} \sigma(A+K)= \{1\}$. 

According to Example~\ref{Ex2} the essential numerical range of $A$ is given by 
$$
  W_e(A)= \left\{ \lm\in\C: \re\,\lm \geq \frac 34, |\im\,\lm| \leq \sqrt{\re\,\lm - \frac 34}\right\}
  =\left\{|\gamma|^2+1+\gamma:\,\gamma\in\C\right\}.
$$
In particular, $W(A)\neq\C$ and hence assumption (a) of Theorem~\ref{thmdescloux} is satisfied.
For the projection method onto the subspaces 
$$V_n:={\rm span}\{\cos(k\cdot),\sin(k\cdot):\,k=1,\dots,n\}, \quad n\in\N,$$
Theorem~\ref{thmgalerkin}~i) shows that $W_e\left((A_{H_n})_{n\in\N}\right)=W_e(A)$, and 
it is easy to see that $\sigma(A_{V_n})=\{k^2:\,k=1,\dots,n\}$. Thus, 
for every $\lm_0\in\rho(A)$ we have $\lm_0\in\rho(A_{V_n})$, $n\in\N$, and
$$(A_{V_n}-\lm_0)^{-1}P_{V_n}\slong (A-\lm_0)^{-1}, \quad n\to\infty.$$
Hence also assumption (b) of Theorem~\ref{thmdescloux} is satisfied. 
According to Theorem~\ref{thmdescloux}, for every $\lm\in W_e(A)$, there exist finite-dimensional extensions $H_n\supset V_n$, 
$n\in\N$, and $\lm\in\sigma(A_{H_n})$, $n\in\N$, with $\lm_n\to\lm$.

In fact, if $\lambda \in W_e(A)$, then there exists $\gamma\in\C$ so that $\lm=|\gamma|^2+1+\gamma$. If we set 
$$f_n:=\frac{\overline{\gamma}}{n}\cos(n\cdot)+\sin(n\cdot), \quad n\in\N,$$
and $H_n:=V_n\oplus{\rm span}\{f_{n+1}\}$, then 
$$\sigma(A_{H_n})=\sigma(A_{V_n})\cup \{\lm_n\}, \quad \lm_n:=\frac{\langle A f_{n+1},f_{n+1}\rangle}{\|f_{n+1}\|^2}\tolong |\gamma|^2+1+\gamma=\lm.$$
Since the subspaces $V_n$, $n\in\N$, are invariant under $A$, Remark \ref{proj-sigma_e_lim} and the decomposition $H_n=V_n\oplus{\rm span}\{f_{n+1}\}$ yield that
$$\sigma_e\left((A_{H_n})_{n\in\N}\right)=\sigma_e(A)\cup\{\lim_{n\to\infty}\lambda_n\}
=\{1\} \cup\{\lambda\}.$$
So, starting from a given projection method onto subspaces $V_n$, $n\in\N$, for an arbitrary point $\lambda \in W_e(A) \setminus \sigma(A)$ we have explicitly constructed a projection method onto subspaces $H_n \supset V_n$ with $\lambda$ as a point of spectral pollution.

Note that here the inclusion ${\rm conv}\,\sigma_e\left((A_{H_n})_{n\in\N}\right) \subsetneq W_e\left((A_{H_n})_{n\in\N}\right)$ is strict since, by Remark \ref{proj-sigma_e_lim} and Theorem~\ref{thmgalerkin}~i), 
$$
{\rm conv}\sigma_{\!e}\!\left((A_{H_{\!n}})_{n\in\N}\right) \!=\! {\rm conv}\sigma_{\!e}(A) \!=\! \{1\} \!\subsetneq\!  \left\{|\gamma|^2\!\!+\!1\!+\!\gamma\!:\!\gamma\!\in\!\C\right\} \!=\! W_{\!e}(A) \!=\!
W_{\!e}\!\left((A_{H_{\!n}})_{n\in\N}\right)\!.\!
$$
\end{example}

\smallskip

The following example shows that Theorem \ref{thmdescloux2} is sharp in the following sense. 
Without the modification of the subspaces $V_n$ it may happen that, if the inclusion $W_{e1}(T)\subset W_{\!e}(T)$ is strict, 
only points $\lambda\in W_{e1}(T)$ can be arranged to be spurious eigenvalues. Recall that $W_{e1}(T) \subsetneq W_{\!e}(T)$ necessitates $W_{\!e}(T)=W(T)=\C$.

\begin{example}\label{ex:nopollution}
In Example \ref{exequivWe} we considered a selfadjoint operator $T_0$ in $H$ with $\sigma(T_0)=\sigma_{e}(T_0)=\R$ and $S$ with $\dom(S)=\dom(T_0)$ 
is of the form $S=Q\Phi$ where $\Phi : H\to\C$ is an unbounded linear functional which is $T_0$-bounded and $Q:\C\to H$, $Qz=zg$ with fixed 
$g\in H \setminus \{0\}$. We showed that $W_{e1}(T) = \R \subsetneq \C = W_{\!e}(T)$. Moreover, since $S$ is $T_0$-compact, $\sigma(T)=\sigma_e(T)=\sigma_{e}(T_0)=\R$.
Here we wish to choose $g \in {\rm ker}\,T_0 \cap {\rm ker}\, \Phi$. This can be achieved, e.g.\ by choosing $T_0$ such that $\ker T_0 \ne \{0\}$, 
$\Phi := \langle T_0 \,\cdot\,,y \rangle$ with $y \notin \dom(T_0)$ and $g \in \ker T_0$.

Let $V_n\!\subset\! \dom(T)$, $n\!\in\! \N$, with $P_{V_n}\!\s\! I$ be such that $g\!\in\! V_n$, $n\!\in\!\N$. 
Suppose~that $\Omega\!\subset\! W_{\!e}(T)\backslash W_{e1}(T)\!=\!\C\backslash\R$ is compact, and assume there exist subspaces $H_n\!\supset\! V_n$ as in  Theo\-rem~\ref{thmdescloux} filling $\Omega$ with spectral pollution, 
i.e.\ $\sup_{\lambda\in\Omega}{\rm dist}(\lm,\sigma(T_{H_n}))\!\to\! 0$ as $n\!\to\!\infty$. Because $g\!\in\! V_n\!\subset\! H_n$, we can write $H_n\!=\!{\rm span}\{g\}\oplus U_{\!n}$. 
Then $Tg\!=\!0$, $S g \!=\! \Phi(g) g\!=\!0$ by the choice of $g$, $S_{U_{\!n}}\!=\!0$ since $U_n \!\perp\! g$ and so, because $T_0$ is selfadjoint, 
$$\sigma(T_{H_n})=\{0\} \cup\,\sigma(T_{U_{\!n}})\subset \{0\} \cup W(T_{U_{\!n}})= \{0\} \cup W(T_{0,{U_{\!n}}})\subset\R.$$
Since $\Omega\subset \C \backslash \R$ is compact, this contradicts $\sup_{\lambda\in\Omega}{\rm dist}(\lm,\sigma(T_{H_n}))\!\to\! 0$ as $n\!\to\!\infty$.
Hence no such subspaces $H_n$, $n\in\N$, can exist.
\end{example}

\section{Application II: Domain truncation method}\label{sectiondomaintruncation}

In this section we study spectral exactness of domain truncation methods for strongly elliptic partial
differential operators $A$ in $L^2(\R^d)$ 
with arbitrary dimension $d\in\N$. We show that, for domain truncation to bounded nested $\Omega_n$ exhausting $\R^d$ and Dirichlet conditions, 
spectral pollution is confined to the essential numerical range $W_e(A)$ and every isolated $\lambda \in \sigma(A)$ is approximated. 

More precisely, we consider a strongly elliptic 
differential operator $A$ of even order $2m\in\N$, induced by the quadratic form 
\beq
\label{qformA}
a[f]:=\sum_{\alpha, \beta \in\N_0^d\atop |\alpha|+|\beta|\leq 2m}
\big\langle Q_{\alpha,\beta}\frac{1}{\I^{|\alpha|}}D^{\alpha}f,\frac{1}{\I^{|\beta|}}D^{\beta}f\big\rangle,
\quad \dom(a):=H^{m}(\R^d),
\vspace{-1mm}
\eeq
with coefficients $Q_{\alpha,\beta}\in L^{\infty}(\R^d)$ for all $\alpha,\beta\in\N_0^d$ with $|\alpha|+|\beta|\leq 2m$
and constant leading coefficients, $Q_{\alpha,\beta}:=c_{\alpha,\beta}\in\C$ if $|\alpha|+|\beta|=2m$. This means the associated \emph{principal symbol} 
$$p_{2m}(\xi):=\sum_{\alpha,\beta \in\N_0^d\atop |\alpha|+|\beta|= 2m}c_{\alpha,\beta}\xi^{\alpha+\beta}, \quad \xi\in\R^d,$$
is independent of $x\in\R^d$ and satisfies 
\beq
\label{ass.elliptic}
\re p_{2m}(\xi)> 0, \quad \xi\in\R^d\backslash\{0\};
\eeq
since $p$ is homogeneous, this implies that 
$p_{2m}$ is sectorial, i.e.\ there exist $a_{2m}$, $b_{2m}\!\geq\! 0$ with
\beq
\label{eq.relbd}
  |\im p_{2m}(\xi)|\leq a_{2m}+b_{2m} \re p_{2m}(\xi), \quad \xi\in\R^d.
\eeq

Note that \eqref{qformA} allows for both divergence form (i.e.\ $Q_{\alpha,\beta}=0$ if $|\alpha|>m$ or $|\beta|>m$) and non-divergence form
(i.e.\ $Q_{\alpha,\beta}=0$ if $\beta>0$) with $L^\infty(\R^d)$-coefficients.

\begin{theorem}\label{thmtruncation}
Let $\Omega_n\subset\R^d$, $n\in\N$, be bounded nested domains exhausting $\R^d$ and, if $d\geq 2$, with boundaries of class~$C$.  
Then the $m$-sectorial operators $A$, $A_n$, $n\in\N$, associated with the densely defined, closed and sectorial forms $a$ and 
$a_n:=a|_{H_0^{m}(\Omega_n)}$  in $L^2(\R^d)$ and $L^2(\Omega_n)$, $n\in\N$, respectively, satisfy the following:
\begin{enumerate}
\item[\rm i)]
$A_n$, $n\!\in\!\N$, have compact resolvents, 
$A_n\!\gsrlong\! A$ as well as $A_n^*\!\gsrlong\! A^*$, and
\beq\label{eq.Weinclusion}
W_e\left((A_n)_{n\in\N}\right)=W_e(A).
\eeq

\item[\rm ii)] 
Spectral pollution is confined to $W_e(A)$,
$$ 
  \sigma_{poll}((A_n)_{n\in\N}) \subset W_e(A).
$$

\item[\rm iii)] 
Every isolated $\lm\in\sigma(A)$ outside $W_e(A)$ is approximated by $(A_n)_{n\in\N}$.
\end{enumerate}
\end{theorem}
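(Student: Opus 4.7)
The plan is to prove part i) first, which serves as the main engine for parts ii) and iii). The compact resolvent property of each $A_n$ follows from the Rellich--Kondrachov compactness of $H_0^m(\Omega_n) \hookrightarrow L^2(\Omega_n)$ (valid since $\Omega_n$ is bounded with $C$-boundary for $d \ge 2$) together with \cite[Thm.~VI.3.3]{Kato} applied to the closed sectorial form $a_n$. For generalised strong resolvent convergence $A_n \gsr A$, I would pick $\lambda_0$ with $\re(\lambda_0 - \gamma)$ sufficiently negative, where $\gamma$ is the common sectoriality vertex inherited from \eqref{eq.relbd}, so that $\lambda_0 \in \rho(A) \cap \bigcap_n \rho(A_n)$. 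A uniform G{\aa}rding inequality based on \eqref{ass.elliptic} then yields a bound $\|(A_n - \lambda_0)^{-1} P_n f\|_{H^m(\R^d)} \le C\|f\|$ for $f \in L^2(\R^d)$ (after zero extension of elements of $H_0^m(\Omega_n)$). Passing to a weak $H^m$-subsequential limit $u$ and testing against $v \in C_c^\infty(\R^d)$, which lies in $H_0^m(\Omega_n)$ for $n$ large, identifies $u = (A - \lambda_0)^{-1} f$; the weak-to-strong $L^2$ upgrade follows by the standard energy-norm convergence $\re((a_n - \lambda_0)[u_n]) \to \re((a - \lambda_0)[u])$. Repeating the argument for the adjoint form $a^*$, which has the same structural properties, yields $A_n^* \gsr A^*$.

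For the equality $W_{\!e}((A_n)_{n\in\N}) = W_{\!e}(A)$ in part i), the inclusion $\supset$ is immediate from Proposition~\ref{propWeTnprop}~i) using $A_n \gsr A$ and $P_n \s I$ (the latter because $\Omega_n \uparrow \R^d$). For $\subset$, I apply Proposition~\ref{prop:limitingWe} with $t = a$: the zero extension embeds $\dom(A_n) \subset H_0^m(\Omega_n)$ into $\dom(a) = H^m(\R^d)$, and $\langle A_n x_n, x_n\rangle = a[x_n]$ for $x_n \in \dom(A_n)$; since $\dom(A)$ is a core of $a$ by \cite[Thm.~VI.2.1]{Kato}, the second conclusion of the proposition yields the desired inclusion.

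For parts ii) and iii), I invoke Theorem~\ref{mainthmspectralexactness}~ii): with $A_n \gsr A$ and each $A_n$ of compact resolvent, spectral pollution is confined to $\sigma_e((A_n^*)_{n\in\N})^*$ and every isolated $\lambda \in \sigma(A)$ outside this set is approximated by $(A_n)_{n\in\N}$. Applying the machinery of part i) to $A^*$ in place of $A$ (which is m-sectorial with associated form $a^*$, truncated by $a_n^*$ to yield $A_n^*$) gives $W_{\!e}((A_n^*)_{n\in\N}) = W_{\!e}(A^*)$. Proposition~\ref{propWeTnprop}~i) then provides $\sigma_e((A_n^*)_{n\in\N}) \subset W_{\!e}((A_n^*)_{n\in\N}) = W_{\!e}(A^*)$, and $a^*[x] = \overline{a[x]}$ combined with $\dom(A^*)$ being a core of $a^*$ yields $W_{\!e}(A^*) = W_{\!e}(A)^*$. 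Hence $\sigma_e((A_n^*)_{n\in\N})^* \subset W_{\!e}(A)$, which completes ii) and iii).

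The principal difficulty will be making the generalised strong resolvent convergence rigorous: one must obtain an $n$-uniform G{\aa}rding inequality across all truncated form domains $H_0^m(\Omega_n)$ simultaneously (where \eqref{ass.elliptic} and the constancy of the leading coefficients are essential, to avoid dependence on lower-order coefficient bounds), and upgrade weak $H^m$-convergence to strong $L^2$-convergence on the unbounded ambient space $\R^d$, where the embedding $H^m \hookrightarrow L^2$ is not compact. The energy argument using the real part of the form handles the latter cleanly, provided the G{\aa}rding bound gives a uniform coercivity constant.
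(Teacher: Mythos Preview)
Your proposal is correct, and parts ii) and iii), as well as the essential numerical range equality in i), follow the paper's route essentially verbatim (Propositions~\ref{propWeTnprop} and~\ref{prop:limitingWe} together with Theorem~\ref{mainthmspectralexactness}~ii); the paper applies the latter to the adjoints and uses $\sigma_{poll}((A_n^*)_{n\in\N})=\sigma_{poll}((A_n)_{n\in\N})^*$, while you apply it directly to $(A_n)_{n\in\N}$ and bound $\sigma_e((A_n^*)_{n\in\N})^*$ via $W_{\!e}(A^*)^*=W_{\!e}(A)$, which is an equivalent rerouting).

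The genuinely different step is the proof of $A_n\gsr A$. The paper separates off the constant-coefficient principal part $T=p_{2m}(\tfrac{1}{\I}D)$, uses that $C_0^\infty(\R^d)$ is a core of $T$ to obtain $T_n\gsr T$ directly, and then transfers this to $A_n\gsr A$ by the uniform resolvent-difference bound $\|(A_n-\lambda_\eps)^{-1}-(T_n-\lambda_\eps)^{-1}\|\le\tfrac{2\eps}{(1-2\eps)|\lambda_\eps|}$ coming from \cite[Thm.~VI.3.4]{Kato} and the relative form bound~\eqref{eq:quad.bound}. Your variational route (uniform G{\aa}rding bound, weak $H^m$-compactness, testing against $C_c^\infty$, and the energy identity $\re(a-\lambda_0)[u_n]=\re\langle P_nf,u_n\rangle\to\re(a-\lambda_0)[u]$ to upgrade weak to strong convergence in the equivalent Hilbert norm $\re(a-\lambda_0)[\,\cdot\,]$) is equally valid and in some ways more self-contained: it avoids the principal-part decomposition and yields strong $H^m$-convergence, not merely $L^2$. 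The paper's approach, on the other hand, makes the perturbative structure explicit and reuses the principal part $T$ later in Proposition~\ref{prop.constcoeff}. Both methods hinge on the same two facts---constancy of the leading coefficients (giving a domain-independent G{\aa}rding/coercivity constant) and density of $C_c^\infty(\R^d)$ in the limiting form domain---so the difference is one of packaging rather than substance.
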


\vspace{1mm}

\begin{proof}
i)
First we define the principal part of $A$,
\beq \label{princpart}
T:=p_{2m}\left(\frac{1}{\I}D\right)=
\sum_{\alpha, \beta \in\N_0^d\atop |\alpha|+|\beta|= 2m} c_{\alpha,\beta}\frac{1}{\I^{N}}D^{\alpha+\beta},
\quad \dom(T):=H^{2m}(\R^d).
\eeq
Since $A$ is strongly elliptic, and hence \eqref{ass.elliptic}, \eqref{eq.relbd} hold, $T$ is $m$-sectorial and $C_0^\infty(\R^d)$ is a core of~$T$, see~\cite[Prop.~IX.6.4, Cor.~IX.6.7]{edmundsevans}.
The corresponding quadratic form 
$$t[f]:=\sum_{\alpha, \beta \in\N_0^d\atop |\alpha|+|\beta|= 2m}
\big\langle c_{\alpha,\beta}\frac{1}{\I^{|\alpha|}}D^{\alpha}f,\frac{1}{\I^{|\beta|}}D^{\beta}f\big\rangle,
\quad \dom(t):=H^{m}(\R^d),$$
is densely defined, closed and sectorial.
Since $Q_{\alpha,\beta}\in L^{\infty}(\R^d)$, Fourier analysis reveals that
the quadratic form $s:=a-t$ is $\re t$-bounded with relative bound~$0$, i.e.\ for every $\eps>0$ there exists $a_{\eps}>0$, without loss of generality $a_{\eps}\geq 1$, such that
\beq\label{eq:quad.bound}
|s[f]|\leq a_{\eps}\|f\|^2+\eps \re t[f],\quad f\in H^{m}(\R^d).
\eeq
Hence~\cite[Theorem~VI.3.4]{Kato} implies that $a$ is also densely defined, closed and sectorial, and the associated $m$-sectorial operator $A$ satisfies, for $\eps\in (0,1/2)$,
$$
\|(A-\lm)^{-1}-(T-\lm)^{-1}\|\leq \frac{2\eps}{(1-2\eps)|\lm|},\quad \lm\in\C,\ \re\lm\leq -\frac{a_{\eps}}{\eps}.
$$

Next we introduce $T_n$, $t_n$, $s_n$, $n\in\N$, in the same way as $T$, $t$, $s$ but with~domains
$$\dom(T_{n}):=H^{2m}(\Omega_n)\cap H_0^{m}(\Omega_n),\quad \dom(t_n)=\dom(s_n):=H_0^{m}(\Omega_n).$$
Note that, $H_0^{m}(\Omega_n)\subset H^{m}(\R^d)$ if we extend every function by zero outside $\Omega_n$. Therefore~\eqref{eq:quad.bound} and~\eqref{eq:res.diff} continue to hold if we replace $s,t,A,T$ by $s_n,t_n,A_n,T_n$. 
Let $f\in\dom(A)$. We construct $f_n\in\dom(A_n)$, $n\in\N$, so that
\beq \label{eq.discconv}
\|f_n-f\|\tolong 0, \quad \|A_nf_n-Af\|\tolong 0,\quad n\to\infty.\eeq
To this end, let $\eps\in (0,1/2)$ and $\lambda_{\eps}\in (-\infty,-a_{\eps}/\eps)$. Then
\beq\label{eq:res.diff}
\hspace{0mm}
\max\!\left\{\!\|(A\!-\!\lm_{\eps})^{-1}\!\!-\!(T\!-\!\lm_{\eps})^{-1}\|,\sup_{n\in\N}\|(A_n\!-\!\lm_{\eps})^{-1}\!\!-\!(T_n\!-\!\lm_{\eps})^{-1}\|\!\right\}\!
\leq\! 
\frac{2\eps}{(1\!-\!2\eps)|\lm_\eps|}.
\hspace{-3mm}
\eeq
For every $\phi\in C_0^{\infty}(\R^d)$ there exists $n_0(\phi)\in\N$ such that
$$\forall\,n\geq n_0(\phi):\quad \phi|_{\Omega_n}\in C_0^{\infty}(\Omega_n)\subset\dom(T_n), \quad T_n(\phi|_{\Omega_n})=(T\phi)|_{\Omega_n}.$$
The $m$-sectoriality of $T$, $T_n$, $n\in\N$, implies  that
$\sup_{n\in\N}\|(T_n-\lm_\eps)^{-1}\|<\infty$ and, using that $C_0^{\infty}(\R^d)$ is a core of $T$ and~\cite[Theorem~3.1]{boegli-chap1},
$$\forall\,g\in L^2(\R^d):\quad \left\|\left((T_n-\lm_\eps)^{-1}\chi_{\Omega_n}- (T-\lm_\eps)^{-1}\right)g\right\|\tolong 0, \quad n\to\infty.$$
Define 
\beq \label{fn}
f_n:=(A_n-\lm_\eps)^{-1}\chi_{\Omega_n}(A-\lm_\eps)f\in\dom(A_n),\quad n\in\N.
\eeq
Then~\eqref{eq:res.diff} and the inequalities $\lambda_{\eps}<-a_{\eps}/\eps\leq -1/\eps$ yield
\begin{align*}
\|f_n-f\|
&\leq \|\left((A_n-\lm_\eps)^{-1}-(T_n-\lm_\eps)^{-1}\right)\chi_{\Omega_n}(A-\lm_\eps)f\|\\
&\quad + \|\left((T_n-\lm_\eps)^{-1}\chi_{\Omega_n}-(T-\lm_\eps)^{-1}\right)(A-\lm_\eps)f\|\\
&\quad + \|\left((A-\lm_\eps)^{-1}-(T-\lm_\eps)^{-1}\right)(A-\lm_\eps)f\|\\
&\leq \frac{4\eps}{(1\!-\!2\eps)|\lm_\eps|}\|(A\!-\!\lm_\eps)f\|+ \left\|\left((T_n\!-\!\lm_\eps)^{-1}\chi_{\Omega_n}\!-\!(T\!-\!\lm_\eps)^{-1}\right)
(A\!-\!\lm_\eps)f\right\|\\
&\leq \frac{4\eps}{1\!-\!2\eps}(\eps\|Af\|+\|f\|)+ \left\|\left((T_n\!-\!\lm_\eps)^{-1}\chi_{\Omega_n}\!-\!(T\!-\!\lm_\eps)^{-1}\right)(A\!-\!\lm_\eps)f\right\|.
\end{align*}
By taking first $\eps$ small enough and then $n$ large enough, the right hand side can be made arbitrarily small, which proves the first convergence in~\eqref{eq.discconv}.
The second convergence in~\eqref{eq.discconv} follows from
\begin{align*}
\|A_nf_n-Af\|&=\|\chi_{\Omega_n}(A-\lm_\eps)f+\lambda_\eps f_n-Af\|\\
&\leq \|(I-\chi_{\Omega_n})(A-\lm_\eps)f\|+|\lm_\eps|\|f_n-f\|\tolong 0,\quad n\to\infty.
\end{align*}
Now fix $\eps \!\in\! (0,1/2)$ and $\lm_{\eps}\!<\!-a_\eps/\eps$ as in the construction of $f_n$ in \eqref{fn} satisfying~\eqref{eq.discconv}. Then the $m$-sectoriality of $A_n$, $n\in\N$, implies $\sup_{n\in\N}\|(A_n-\lm_\eps)^{-1}\|\!<\!\infty$.
Let $g\in L^2(\R^d)$ and define $f:=(A-\lm_\eps)^{-1}g\in\dom(A)$.
Then, by~\eqref{eq.discconv}, \eqref{fn},  
\begin{align*}
&\|(A_n-\lm_{\eps})^{-1}\chi_{\Omega_n}g-(A-\lm_{\eps})^{-1}g\|\\
&=\|(A_n-\lm_{\eps})^{-1}\chi_{\Omega_n}\left((A-\lm_{\eps})f-(A_n-\lm_{\eps})f_n\right)+f_n-f\|\\
&\leq \|(A_n-\lm_{\eps})^{-1}\|\left(\|Af-A_nf_n\|+|\lm_{\eps}|\|f-f_n\|\right)+\|f_n-f\|
\tolong 0,\quad n\to\infty.
\end{align*}
This proves $A_n\gsr A$.
 Since the adjoint quadratic form $a^*$ satisfies
\eqref{ass.elliptic}, \eqref{eq.relbd} as well, we analogously have $A_n^*\gsr A^*$.

The strong ellipticity of $A$ and~\cite[Proposition~IX.6.4]{edmundsevans} imply that 
$T_n$ is $H_0^{m}(\Omega_n)$-coercive. Hence, by the compactness of the embedding of $H^{1}(\Omega_n)$ in $L^2(\Omega_n)$, 
see e.g.\ \cite[Theorem~4.17]{edmundsevans}, the operator $T_n$ has compact resolvent, and so has $A_n$ by~\cite[Theorem~VI.3.4]{Kato} for every $n\in\N$.

Next we use that $\dom(A_n)\subset\dom(a_n)\subset\dom(a)$ and
\begin{equation}\label{eq:afn}
\forall\,f_n\in\dom(A_n):\quad \langle A_nf_n,f_n\rangle=a[f_n].
\end{equation}
Since  $\dom(A)$ is a core of $a$ by~\cite[Theorem~VI.2.1]{Kato},
it follows that $W(A_n)\subset \overline{W(A)}$ and hence, with Proposition~\ref{prop:limitingWe},
$$W_e\left((A_n)_{n\in\N}\right)\subset W_e(A).$$
Now equality in \eqref{eq.Weinclusion} follows from  Proposition~\ref{propWeTnprop}~i).

ii), iii) Let $\lm\in\C\backslash W_e(A)$.
Then~\eqref{eq.Weinclusion} and
Proposition~\ref{propWeTnprop}~ii) imply that $\lm\notin\sigma_e\left((A_n)_{n\in\N}\right)$.
Now the claim follows from Theorem~\ref{mainthmspectralexactness}~ii) applied to the adjoint operators  
if we note that $\sigma_{poll}((T_n^*)_{n\in\N})=\sigma_{poll}((T_n)_{n\in\N})^*$, which is immediate from Definition \ref{defspectralexactness}~ii).
\end{proof}

In some cases $W_e(A)$ can be determined explicitly, e.g.\ if $A$ is in non-divergence form or in divergence form, and the coefficients 
are asymptotically constant. In this case, although $A$ has complex coefficients and is not selfadjoint, the next proposition shows that $W_e(A)$ is the convex hull of the range of the asymptotic symbol and hence the convex hull of the essential spectrum of $A$.

\begin{prop}\label{prop.constcoeff}
Suppose that $A$ with domain $\dom(A)$ in Theorem{\rm~\ref{thmtruncation}} 
is either given in non-divergence form, i.e.\ $Q_{\alpha,\beta}=0$ if $\beta>0$, 
$$
A=\sum_{\alpha\in\N_0^d\atop |\alpha|\leq 2m}Q_{\alpha,0}\frac{1}{\I^{|\alpha|}}D^{\alpha}, 
\vspace{-1mm}
$$
or in divergence form, i.e.\ $Q_{\alpha,\beta}=0$ if $|\alpha|>m$ or $|\beta|>m$,
$$
\hspace{11mm} A = \hspace{-1.5mm} \sum_{\alpha, \beta \in\N_0^d\atop |\alpha|, |\beta|\leq m} \hspace{-1.5mm}
\frac{1}{\I^{|\alpha|+|\beta|}}D^{\beta} Q_{\alpha,\beta}D^{\alpha}, 
\vspace{-1mm}
$$
and that, in both cases, there exist $c_{\alpha,\beta}\in\C$ with
\beq\label{eq.decay}
Q_{\alpha,\beta}(x)\tolong c_{\alpha,\beta}, \quad |x|\to\infty, \qquad \alpha, \beta \in\N_0^d,\ |\alpha|+|\beta|< 2m.
\eeq
Then, if we denote the limiting symbol of $A$ by 
$$p_\infty(\xi):=p_{2m}(\xi)\ + \hspace{-3mm} \sum_{\alpha\in\N_0^d\atop |\alpha|+|\beta|<2m} \hspace{-3mm} c_{\alpha,\beta}\,\xi^{\alpha}, 
\quad \xi\in\R^d,
\vspace{-3mm}
$$
we have
$$
W_e(A)={\rm conv}\,\big\{p_\infty(\xi):\,\xi\in\R^d\big\} ={\rm conv}\, \sigma_e(A).
$$
\end{prop}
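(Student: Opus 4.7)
The plan is to reduce the problem to the constant-coefficient model operator $A_\infty$ with symbol $p_\infty$ and exploit Fourier analysis, showing that the lower-order difference $a-a_\infty$ does not contribute to the essential numerical range. Let $a_\infty$ be the sesquilinear form obtained from $a$ in \eqref{qformA} by replacing each $Q_{\alpha,\beta}$ with $|\alpha|+|\beta|<2m$ by its limit $c_{\alpha,\beta}$, and let $A_\infty$ be the associated $m$-sectorial Fourier multiplier. Then $\langle A_\infty f,f\rangle=\int_{\R^d}p_\infty(\xi)|\hat f(\xi)|^2\,\rd\xi$, and a direct computation, using $L^2$-normalised Fourier-side bumps concentrating at arbitrary $\xi_0\in\R^d$ (physically translated so that $f_n\stackrel{w}{\to}0$), together with a Prokhorov-type tightness argument (strong ellipticity \eqref{ass.elliptic} bounds $\|f_n\|_{H^m}$, so no mass escapes to infinity on the Fourier side), yields $W_e(A_\infty)={\rm conv}\,p_\infty(\R^d)$.

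To show $p_\infty(\R^d)\subset\sigma_e(A)$ I would use modulated wave packets: fix $\xi_0\in\R^d$, take $\phi\in C_c^\infty(\R^d)$ of unit $L^2$-norm, choose $y_n\in\R^d$ with $|y_n|\to\infty$ sufficiently fast, and set
\[
\phi_n(x):=n^{-d/2}\phi\bigl((x-y_n)/n\bigr)\,\e^{\I\xi_0\cdot x},\quad n\in\N.
\]
Then $\|\phi_n\|=1$ and $\phi_n\stackrel{w}{\to}0$; applying $A$ produces $p_\infty(\xi_0)\phi_n$ modulo errors of two kinds: derivatives landing on the slowly varying envelope (of order $1/n$ in $L^2$) and coefficient fluctuations $Q_{\alpha,\beta}-c_{\alpha,\beta}$ evaluated on $\supp\phi_n\subset y_n+n\,\supp\phi$, which tend to zero uniformly by \eqref{eq.decay} if $|y_n|$ grows fast enough. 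Hence $\|(A-p_\infty(\xi_0))\phi_n\|\to 0$, giving $p_\infty(\xi_0)\in\sigma_e(A)$, and Proposition~\ref{propWeclosed-convex} delivers ${\rm conv}\,p_\infty(\R^d)\subset{\rm conv}\,\sigma_e(A)\subset W_e(A)$.

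The main obstacle is the reverse inclusion $W_e(A)\subset{\rm conv}\,p_\infty(\R^d)$. Given $(x_n)\subset\dom(A)$ with $\|x_n\|=1$, $x_n\stackrel{w}{\to}0$ and $\langle Ax_n,x_n\rangle\to\lm$, sectoriality of $a$ combined with Garding's inequality implies that $(x_n)$ is bounded in $H^m(\R^d)$. Writing $a=a_\infty+s$, each summand of $s$ has the form
\[
\Bigl\langle(Q_{\alpha,\beta}-c_{\alpha,\beta})\,\I^{-|\alpha|}D^\alpha x_n,\,\I^{-|\beta|}D^\beta x_n\Bigr\rangle,\quad|\alpha|+|\beta|<2m.
\]
In the divergence case one has $|\alpha|,|\beta|\le m$ with $\min(|\alpha|,|\beta|)<m$, so the corresponding lower-order derivative converges to zero in $L^2_{\loc}$ by Rellich--Kondrachov. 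Splitting $Q_{\alpha,\beta}-c_{\alpha,\beta}$ into a compactly supported piece (where this local convergence yields strong $L^2$-smallness) plus a uniformly small tail (by \eqref{eq.decay}) shows $(Q_{\alpha,\beta}-c_{\alpha,\beta})D^\alpha x_n\to 0$ in $L^2$, whence, paired with the $L^2$-bounded $D^\beta x_n$, we get $s[x_n]\to 0$. Thus $a_\infty[x_n]\to\lm$ and $\lm\in W_e(A_\infty)={\rm conv}\,p_\infty(\R^d)$. In the non-divergence case the same conclusion follows after first recasting each lower-order term in divergence form via integration by parts, absorbing coefficient derivatives into further $L^\infty$-decaying remainders. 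This technical rewriting, together with the careful uniform Rellich argument for $s$, will be the main technical bulk of the proof; once in place, all three inclusions combine to force equality throughout, and since $\sigma_e(A)\subset W_e(A)={\rm conv}\,p_\infty(\R^d)$ we also obtain ${\rm conv}\,\sigma_e(A)={\rm conv}\,p_\infty(\R^d)$.
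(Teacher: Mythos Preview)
Your approach is largely sound and, in the divergence case, parallels the paper's argument: the paper shows $(\re A_\infty)^{-1/2}A_0(\re A_\infty)^{-1/2}$ is compact and invokes Theorem~\ref{thmSqrt}, which abstractly encodes exactly your Rellich-plus-decay computation showing $s[x_n]\to0$ when $(x_n)$ is $H^m$-bounded and weakly null. Your treatment of $W_e(A_\infty)$ and of the inclusion $p_\infty(\R^d)\subset\sigma_e(A)$ via wave packets is also correct and slightly more explicit than the paper, which simply quotes Fourier analysis for $\overline{W(A_\infty)}={\rm conv}\,\sigma_e(A_\infty)$ and invokes relative compactness of $A_0$ (respectively compactness of the resolvent difference) for $\sigma_e(A)=\sigma_e(A_\infty)$.

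There is, however, a genuine gap in your non-divergence argument when $m\ge 2$. The lower-order part $s$ then contains terms $\langle q_\alpha D^\alpha x_n,x_n\rangle$ with $m<|\alpha|<2m$, and your $H^m$-bound on $x_n$ does not control $\|D^\alpha x_n\|$. Your proposed fix, ``recasting in divergence form via integration by parts, absorbing coefficient derivatives into further $L^\infty$-decaying remainders'', requires differentiating the $L^\infty$ coefficients $Q_{\alpha,0}-c_{\alpha,0}$; the hypotheses give neither smoothness nor any decay of derivatives (e.g.\ $Q(x)=c+x^{-1}\sin(x^2)$ has $Q\to c$ but $Q'\not\to0$), so this step is unjustified. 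The paper avoids the difficulty by working at the operator level rather than the form level: it decomposes $A_0=\re A_0+\I\,\im A_0$ into symmetric lower-order operators with decaying coefficients, notes these are $\re A_\infty$-compact by standard results (Schechter / Edmunds--Evans), and applies Theorem~\ref{thmABUV}. That theorem does \emph{not} proceed by proving $\langle A_0 x_n,x_n\rangle\to0$ directly; its proof exploits the semiboundedness of $\re A_\infty$ together with Proposition~\ref{propsapert} in a more indirect contradiction argument that never needs form-level control of the high-order derivatives. For $m=1$ your direct argument does go through unchanged, since then all lower-order terms have $|\alpha|\le 1=m$.
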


\begin{proof}
Let $T$ be the principal part of $A$, see \eqref{princpart} in the proof of Theorem~\ref{thmtruncation}, 
and define 
\beq \label{lower-order}
  A_\infty:=T+\hspace{-3mm}\sum_{\alpha,\beta\in\N_0^d\atop |\alpha|+|\beta|<2m}\hspace{-3mm}c_{\alpha,\beta}\frac{1}{\I^{|\alpha|+|\beta|}}D^{\alpha+\beta}\!\!\!,\quad\dom(A_\infty):=H^{2m}(\R^d), \quad A_0:=A\!-\!A_\infty.
\eeq
Since $A_\infty$ has constant coefficients, 
Fourier analysis and~\cite[Proposition~IX.6.4]{edmundsevans} yield $\overline{W(A_\infty)}={\rm conv}\,\sigma(A_\infty)$ and $\sigma(A_\infty)=\sigma_e(A_\infty)=\{p_\infty(\xi):\,\xi\in\R^d\}$. Now the sequence of inclusions $W_e(A_\infty)\subset\overline{W(A_\infty)}={\rm conv}\,\sigma_e(A_\infty)\subset W_e(A_\infty)$ implies that all sets therein coincide. 
Hence it remains to be shown that 
$W_e(A)=W_e(A_\infty)$ and $\sigma_e(A)=\sigma_e(A_\infty)$.

First we consider the case that $A$ is in non-divergence form. By definition \eqref{lower-order} the differential operator $A_0$ has order less than $2m$ and all the coefficients of its symbol $p_0(x,\xi) = \sum_{\alpha\in\N_0^d, |\alpha|<2m} ( Q_{\alpha,0}(x) - c_{\alpha,0})\xi^\alpha$ tend to $0$ for $|x|\to\infty$ by~\eqref{eq.decay}, and hence so do the coefficients of the real and imaginary part $\re p_0(x,\xi)$ and $\im p_0(x,\xi)$, respectively.
If we denote the differential operators induced by $\re p_0(x,\xi)$, $\im p_0(x,\xi)$ and $\re p_\infty(\xi)$ by $\re A_0$, $\im A_0$ and $\re A_\infty$, respectively, it follows that $\re A_0$ and $\im A_0$ are $\re A_\infty$-compact, see~\cite[Thm.~5.5.4]{Schechter} or~\cite[Thm.~IX.8.2]{edmundsevans}. 
Now Theorem~\ref{thmABUV} yields
\beq
\label{eq.Wediff}
W_e(A)=W_e(A_\infty + A_0)=W_e(A_\infty).
\eeq
In addition, we also have that $A_0$ is $A_\infty$-compact, which implies
\beq\label{eq.symbol}
\sigma_e(A)=\sigma_e(A_\infty)=\{p_\infty(\xi):\,\xi\in\R^d\}. 
\eeq
If $A$ is in divergence form, assuming $0\in \rho(\re A_\infty)$ after a possible shift of the spectral parameter, we can write 
$$
(\re A_\infty)^{-1/2}A_0(\re A_\infty)^{-1/2}=\sum_{\alpha,\beta\in\N_0^d\atop |\alpha|+|\beta|\leq 2m} F_{\beta}^*G_{\alpha,\beta}
\vspace{-4mm}
$$
with the bounded operators
$$
F_{\beta}:=\frac{1}{\I^{|\beta|}}D^{\beta}(\re T)^{-1/2}, \quad 
G_{\alpha,\beta}:=(Q_{\alpha,\beta}-c_{\alpha,\beta})\frac{1}{\I^{|\alpha|}}D^{\alpha}(\re T)^{-1/2}.
$$
Due to assumption \eqref{eq.decay}, $G_{\alpha,\beta}$ is compact, see \cite[Thm~IX.8.2]{edmundsevans}, and hence so is $(\re A_\infty)^{-1/2}A_0(\re A_\infty)^{-1/2}$. Now Theorem~\ref{thmSqrt} yields \eqref{eq.Wediff}.
Similarly, one can show that, for $\lambda \!\in\! \C$, $\re \lambda \!<\! 0$, $(A_\infty\!-\!\lambda)^{-1/2}A_0 (A_\infty\!-\!\lambda)^{-1/2}$ is compact, noting that~$(A_\infty-\lambda)^{1/2}$ exists because $A_\infty-\lambda$ is $m$-sectorial, see~\cite[Sect.~V.3.11]{Kato}). 
It is easy to check, using that $A=A_0+A_\infty$, that 
\begin{align*}
&(A-\lm)^{-1}- (A_\infty-\lm)^{-1}\\
&=-(A_\infty\!-\!\lm)^{-1}A_0(A_\infty\!-\!\lm)^{-1/2}(I\!+\!(A_\infty\!-\!\lm)^{-1/2}A_0(A_\infty\!-\!\lm)^{-1/2})^{-1}(A_\infty\!-\!\lm)^{-1/2}.
\end{align*}
which shows that the difference of the resolvents of $A$ and $A_\infty$ is compact, and so \eqref{eq.symbol} follows.
\end{proof}

\begin{example}
Consider the advection-diffusion type 
\vspace{-1mm} differential~operator 
$$
 A:=-\frac{\rd^2}{\rd x^2}+Q_1 \,\frac{\rd}{\rd x}+Q_0, \quad \dom(A):=H^{2}(\R),
\vspace{-1mm}
$$
with complex-valued coefficients $Q_1$, $Q_0 \!\in\! L^{\infty}(\R)$ such that 
$$ 
Q_1(x) \to -2, \quad Q_0(x)\to 0,  \qquad |x|\tolong\infty.
$$
Then we have, see \vspace{-1mm} \eqref{eq.symbol}, 
\beq \label{parabola-ex-last}
  \sigma_e(A)=\left\{\lm\in\C:\,\re\lm=\frac{(\im\lm)^2}{2}\right\}
\vspace{-1mm}
\eeq
and Theorem~\ref{thmtruncation}~ii), iii) together with 
Proposition~\ref{prop.constcoeff} yield that, for the truncated operators $A_n$, $n\in\N$, on intervals $(a_n,b_n)$ with $a_n \to -\infty$, $b_n \to \infty$ as $n\to\infty$, and Dirichlet conditions at $a_n$, $b_n$, 
\beq \label{We-last-ex}
\sigma_{poll}((A_n)_{n\in\N}) \subset 
 W_e(A)={\rm conv}\,\sigma_e(A)=\left\{\lm\in\C:\,\re\lm \ge \frac{(\im\lm)^2}{2}\right\}
\eeq
and every isolated $\lm\in\sigma(A)$ outside the parabolic region on the right hand side is approximated by \vspace{-4mm}$(A_n)_{n\in\N}$. 
\begin{figure}[htb]
\begin{center}
\includegraphics[width=0.58\textwidth]{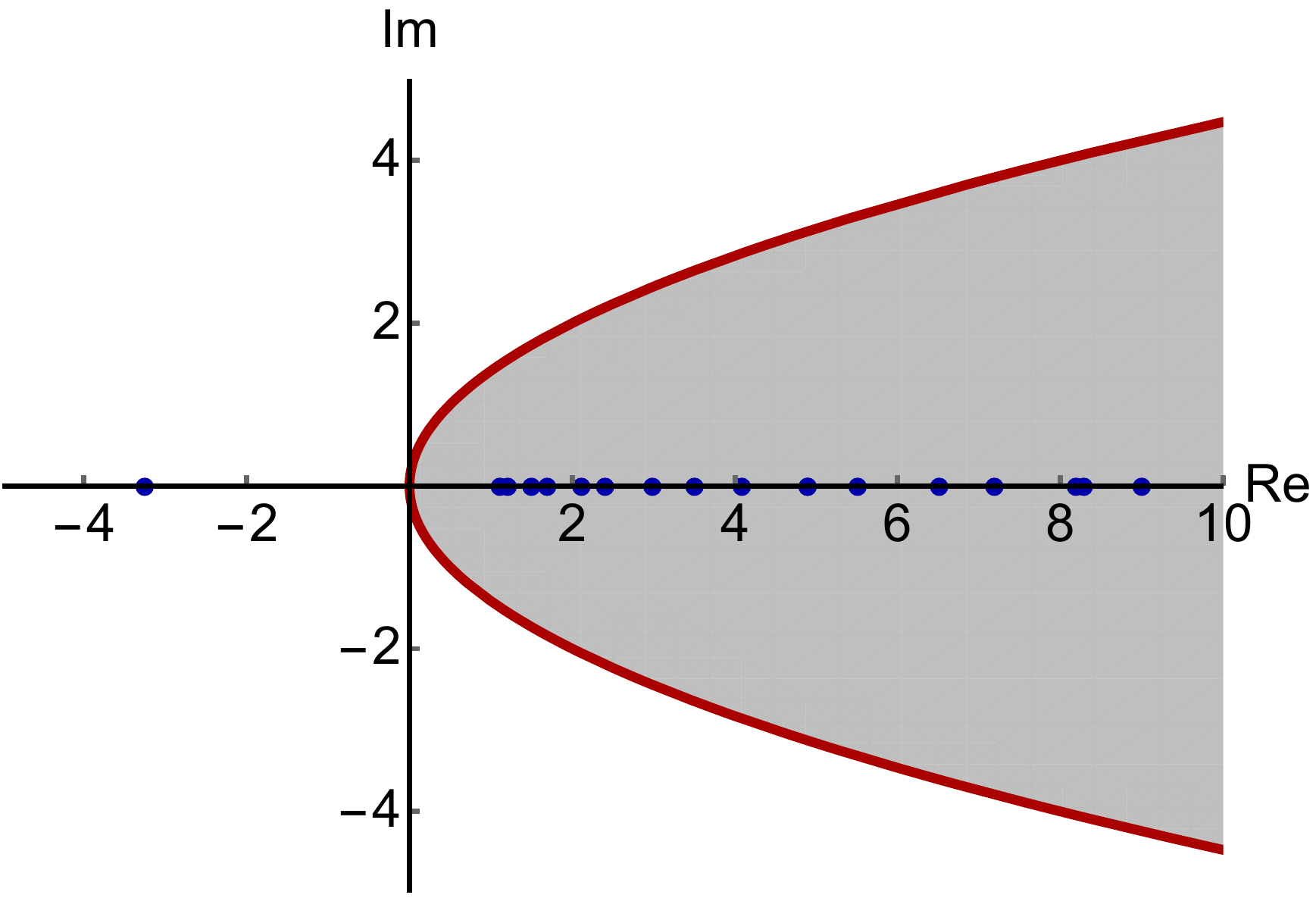}
\caption{\small Eigenvalues of $A_n$ with $Q_1(x)\!=\!-2$, $Q_0(x)\!=\!20\sin(x)\e^{-x^2}$ 
trun\-cated to $[-s_n,s_n]\!=\![-9,9]$ (blue/black points in~$\R$), $\sigma_e(A)$ (red/ black curve) 
and $W_e(A)$ (grey parabolic region with red/black~curve).}
\label{fig.CCDirichletPlane}
\end{center}
\end{figure}

If $Q_1$, $Q_0$ are real-valued, then 
interval truncation with Dirichlet boundary conditions can only produce real eigenvalues.
Indeed, in this case, if e.g.\  $Q_1 \in W^{1,\infty}(\R)$ each truncated operator $A_n$ can be transformed to an operator $\widetilde A_n$ 
still satisfying our assumptions which has the same eigenvalues 
without first order term and with real-valued potential, given by $\widetilde A_n = -\frac{\rd^2}{\rd x^2} + \widetilde Q_0$ with 
$\widetilde Q_0 = Q_0 - \frac 12 Q_1' + \frac 14 (Q_1)^2 \in L^\infty(\R)$ subject to Dirichlet boundary conditions. 
The transformed operator $\widetilde A_n$ has real numerical range satisfying  
\[
  \sigma_p(A_n) \subset W(\widetilde A_n) \subset \Big[ {\rm ess\,inf} \Big (Q_0 - \frac 12 Q_1' + \frac 14 (Q_1)^2 \Big),\infty \Big), \quad n \in \N,
\]
independently of $n$. Together with our new result \eqref{We-last-ex}, this shows that, in this case, 
spurious eigenvalues are confined to  
\begin{align}
  \sigma_{poll}((A_n)_{n\in\N}) 
	& \subset W_e(A) \cap \Big[ {\rm ess\,inf} \Big (Q_0 - \frac 12 Q_1' + \frac 14 (Q_1)^2 \Big),\infty \Big) \nonumber \\
	&= \Big[ \max \Big\{ 0,  {\rm ess\,inf} \Big (Q_0 - \frac 12 Q_1' + \frac 14 (Q_1)^2 \Big) \Big\} , \infty \Big), 
	\label{poll-last-ex}
\end{align}
but also that the approximation $(A_n)_{n\in\N}$ is not spectrally inclusive since no non-real spectral point  $\lambda \in\sigma(A)\backslash\R$, and
so, in particular, none of the non-zero points on the parabola $\sigma_e(A)$, is approximated. 

That the inclusion \eqref{poll-last-ex} is sharp follows if we consider the 
special case $Q_1\equiv -2$, $Q_0 \equiv 0$.
Here $\widetilde Q_0 \equiv 1$  so that \eqref{poll-last-ex} yields $\sigma_{poll}((A_n)_{n\in\N}) \subset [1,\infty)$.   
As remarked by Davies~\cite{davies-CCDiffOp}, the set of eigenvalues of $A_n$ is given by $\sigma(A_n)=\{1 + \frac{\pi^2 k^2}{4 s_n^2}: k \in \N\}$ 
and hence the set of accumulation points $\lambda = \lim_{n\to\infty} \lambda_n$ with $\lambda_n \in \sigma(A_n)$ is the whole interval 
$[1,\infty)$; since here $\sigma(A)=\sigma_e(A)$ is the parabola in \eqref{parabola-ex-last}, $[1,\infty)$ consists entirely of spurious eigenvalues.

Another interesting example is the special case $Q_1\equiv -2$, $Q_0(x):=20\sin(x)\e^{-x^2}$, $x\in\R$, considered in 
\cite{boegli-limitingess}. 
Here $\widetilde Q_0(x) = Q_0(x) + 1$, $x\in \R$, and ${\rm ess\,inf} \widetilde Q_0 \approx -6.933$ and hence  
\eqref{poll-last-ex} yields $\sigma_{poll}((A_n)_{n\in\N}) \subset [0,\infty)$. 
The eigenvalues of the truncated operator $A_n$ on the interval $[-s_n,s_n]$ with Dirichlet boundary conditions,  
which were computed numerically using a shooting method implemented in Wolfram Mathematica, are shown in Figure~\ref{fig.CCDirichlet}
for increasing values of  \vspace{-2mm}$s_n\in[0,9]$.
\begin{figure}[htb]
\begin{center}
\includegraphics[width=0.6\textwidth]{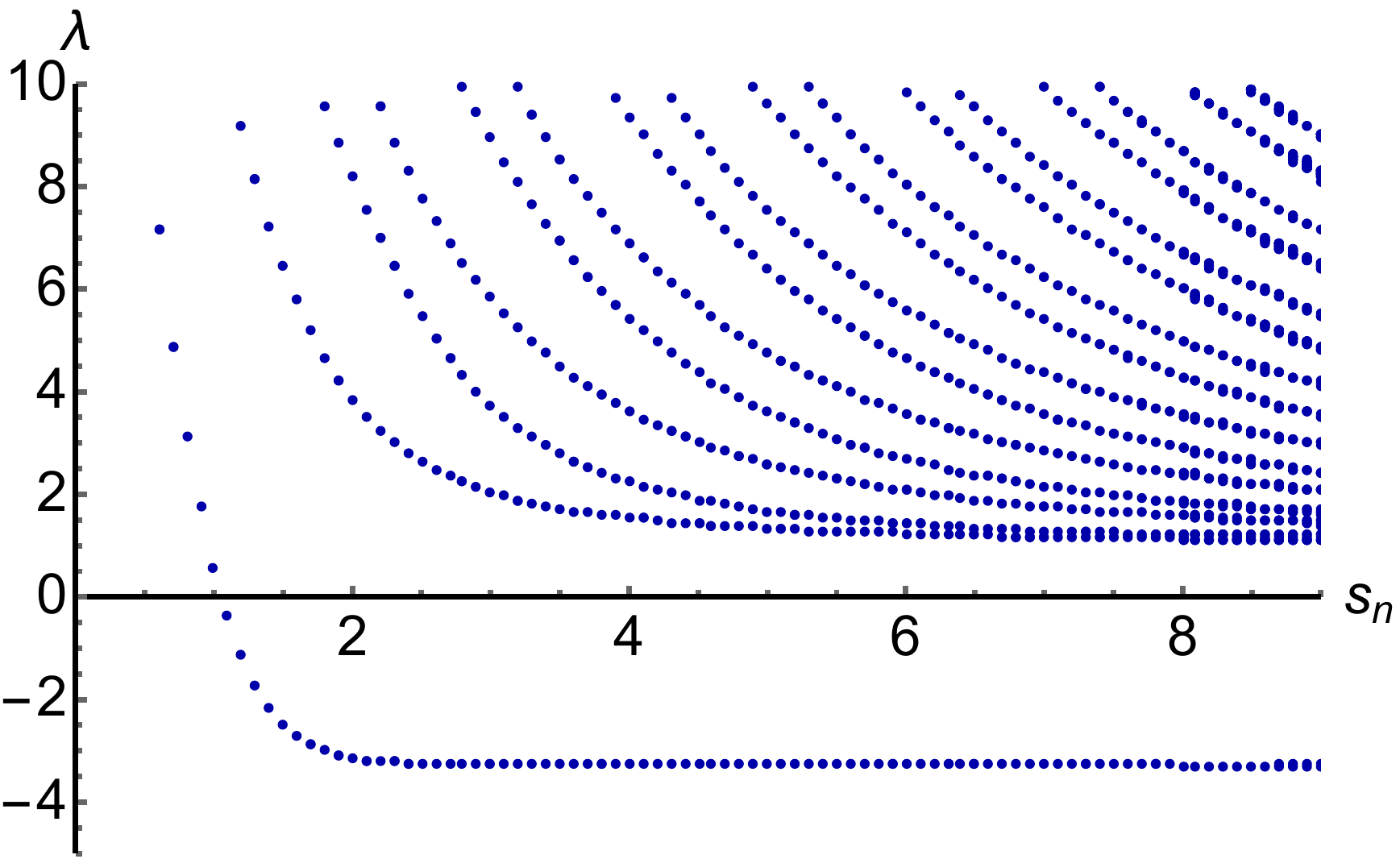}
\caption{\small Eigenvalues in the interval $[-5,10]$ of $A_n$ with $Q_1(x)=-2$, $Q_0(x)=20\sin(x)\e^{-x^2}$ 
truncated to $[-s_n,s_n]$ for different values of $s_n$.}
\label{fig.CCDirichlet}
\end{center} 
\end{figure}

Our result \eqref{We-last-ex} shows, first, that all accumulation points in $[0,\infty)$ may be~spurious and, 
secondly, that the accumulation point $\lm\approx -3.25$ which does \emph{not} belong to $W_e(A)$ is \emph{not} a spurious but a true eigenvalue, 
i.e.\ $\lm \in \sigma(A)$, see also Figure~\ref{fig.CCDirichletPlane}. 
This result agrees with the spectral exactness results of~\cite[Section~4.2]{boegli-limitingess} 
by which 
the only discrete eigenvalue of $A$ in the box $[-5,10]+[-5,5]\,\I $ is the point $\lm\approx -3.25$.
\end{example}

{\bf Acknowledgements.} 
{\small 
The authors thank Y. Arlinskii, L.\ Boulton and W.\ Des Evans for fruitful discussions.  They
also gratefully acknowledge the support of the Swiss National Science Foundation (SNF), 
grant no.\ 200020\_146477 (S.B., C.T.) and Early Postdoc Mobility project P2BEP2\_159007 (S.B.).
}

\bibliography{mybib-jfa}{}
\bibliographystyle{acm}

\end{document}